\numberwithin{equation}{section}
\title[Standard barely set-valued tableaux]{Enumeration of standard barely set-valued tableaux of shifted shapes}
\author{Jang Soo Kim}
\address{
Department of Mathematics, Sungkyunkwan University, Suwon 16420,
South Korea}
\email{jangsookim@skku.edu}
\author{Michael J.\ Schlosser}
\address{Fakult\"at f\"ur Mathematik, Universit\"at Wien,
Oskar-Morgenstern-Platz~1, A-1090 Vienna, Austria}
\email{michael.schlosser@univie.ac.at}
\author{Meesue Yoo}
\address{
Department of Mathematics, Chungbuk National University, Cheongju 28644,
South Korea}
\email{meesueyoo@chungbuk.ac.kr (\Letter)}
\thanks{The first author was supported by NRF grants \#2016R1D1A1A09917506 and \#2016R1A5A1008055.
The second author was partially supported by FWF (Austrian Science Fund) grant P32305.
The third author was supported by Chungbuk National University.}
\keywords{Young diagram, Young's lattice,
coincidental down-degree expectations,
barely set-valued tableaux,
$q$-integral}
\subjclass[2010]{Primary: 05A15; Secondary: 05A30}
\date{\today}
\newtheorem{thm}{Theorem}[section]
\newtheorem{lem}[thm]{Lemma}
\newtheorem{prop}[thm]{Proposition}
\theoremstyle{definition}
\newtheorem{exam}[thm]{Example}
\newtheorem{defn}[thm]{Definition}
\newtheorem{conj}[thm]{Conjecture}
\newtheorem{remark}[thm]{Remark}
\newcommand\wt{\operatorname{wt}}
\newcommand\BB{\mathcal{B}}
\newcommand\ddeg{\operatorname{ddeg}}
\newcommand\NE{\operatorname{NE}}
\newcommand\EE{\mathbb{E}}
\newcommand\ZZ{\mathbb{Z}}
\newcommand\rdiag{\operatorname{rdiag}}
\newcommand\SSYT{\operatorname{SSYT}}
\newcommand\SYT{\operatorname{SYT}}
\newcommand\Par{\mathrm{Par}}
\newcommand\la{\lambda}
\newcommand\ma{\mathsf a}
\newcommand\aqddeg{\operatorname{ddeg_{\mathsf a;q}}}
\begin{document}
%----------------------------------------------------------------------------------------------------------

\begin{abstract}
  A standard barely set-valued tableau of shape $\lambda$ is a filling of the
  Young diagram $\lambda$ with integers $1,2,\dots,|\lambda|+1$ such that the
  integers are increasing in each row and column, and every cell contains one
  integer except one cell that contains two integers. Counting standard barely
  set-valued tableaux is closely related to the coincidental down-degree
  expectations (CDE) of lower intervals in Young's lattice. Using $q$-integral
  techniques we give a formula for the number of standard barely set-valued
  tableaux of arbitrary shifted shape. We show how it can be used to recover
  two formulas, originally conjectured by Reiner, Tenner and Yong, and proved
  by Hopkins, for numbers of standard barely set valued tableaux of particular
  shifted-balanced shapes. We also prove a conjecture of Reiner, Tenner and Yong
  on the CDE property of the shifted shape $(n,n-2,n-4,\dots,n-2k+2)$.
  Finally, in the appendix we raise a conjecture on an $\mathsf a;q$-analogue
  of the down-degree expectation with respect to the uniform distribution for
  a specific class of lower intervals in Young's lattice.
\end{abstract}

\maketitle

% \tableofcontents
%----------------------------------------------------------------------------------------------------------

\section{Introduction}

%----------------------------------------------------------------------------------------------------------

Recently, Reiner, Tenner and Yong \cite{RTY2018} introduced a property on
posets, called the coincidental down-degree expectations (CDE). They showed that
many interesting posets have this property. For example, disjoint unions of
chains, Cartesian products of chains, weak Bruhat order on a finite Coxeter
group, Tamari lattices on polygon triangulations, and connected minuscule posets
have the CDE property. Another important family of posets included in their results is a
family of lower intervals of Young's lattice. They also considered lower
intervals of the shifted Young's lattice and proposed two conjectures on the CDE
property of a certain family of lower intervals of the shifted Young's lattice.
One of the two conjectures was proved by Hopkins \cite{Hopkins2017}. The main
goal of this paper is to prove the remaining conjecture.

If $P$ is an interval of (shifted or usual) Young's lattice, the CDE property of
$P$ is closely related to standard barely set-valued tableaux, which are the
main object of interest in this paper. Reiner, Tenner and Yong \cite{RTY2018}
found a formula for the number of barely set-valued tableaux of any regular
shape using an ``uncrowding algorithm'', which is in the spirit of the
Robinson--Schensted algorithm. In this paper we give an analogous formula for
the number of standard barely set-valued tableaux of any shifted shape. In doing
so, we use a modification of their uncrowding algorithm and also the
$q$-integral techniques developed in \cite{KimStanton17}.

We now give precise definitions needed to state our results. Let $P$ be a finite poset.
The \emph{down-degree} $\ddeg(x)$ of an element $x\in P$ is the number of
elements in $P$ covered by $x$. Define $X$ (resp.\ $Y$) to be the random variable
computing the down-degree of $x\in P$ with respect to the uniform distribution
(resp.\ the probability distribution proportional to the number of maximal chains
containing $x$). We say that the poset $P$ has the \emph{coincidental
  down-degree expectations (CDE) property} if $\mathbb{E}(X)=\mathbb{E}(Y)$. For
example if $P$ is the poset in Figure~\ref{fig:CDE}, then $\ddeg(a)=0$,
$\ddeg(b)=1$, $\ddeg(c)=1$, $\ddeg(d)=2$, and the number of maximal chains
through $a,b,c,d$ are $2,1,1,2$, respectively. Thus
\begin{align*}
\EE(X)&=\frac{1}{4}(0+1+1+2)=1,  \\
\EE(Y)&=\frac{1}{2+1+1+2}(0\cdot 2+1\cdot 1+1\cdot 1+2\cdot 2)=1,
\end{align*}
and therefore this poset satisfies the CDE property.

\begin{figure}
\centering
\begin{tikzpicture}[scale=.55]
\draw[thick] (1,0)--(2,1)--(1,2)--(0,1)--cycle;
\filldraw [black] (1,0) circle (3.5pt)
			(2,1) circle (3.5pt)
			(1,2) circle (3.5pt)
			(0,1) circle (3.5pt);
\node[] at (1, -.5) {$a$};
\node[] at (-.5, 1) {$b$};
\node[] at (2.5, 1) {$c$};
\node[] at (1, 2.6) {$d$};
\end{tikzpicture}
  \caption{A poset satisfying the CDE property with $\EE(X)=\EE(Y)=1$. }
  \label{fig:CDE}
\end{figure}
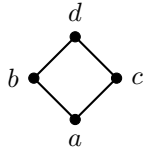

A \emph{partition} is a weakly decreasing sequence
$\lambda=(\lambda_1,\lambda_2,\dots,\lambda_k)$ of positive integers. Each
integer $\lambda_i$ is called a \emph{part} of $\lambda$. The \emph{size}
$|\lambda|$ of $\lambda$ is the sum of its parts and the \emph{length}
$\ell(\lambda)$ of $\lambda$ is the number of its parts. If all parts in
$\lambda$ are distinct, we say that $\lambda$ is \emph{strict}. The set of all
partitions is denoted by $\Par$ and the set of all partitions with at most $n$
parts by $\Par_n$. We also denote by $\Par^*$ the set of all strict partitions
and by $\Par^*_n$ the set of all strict partitions with at most $n$ parts.

If a partition $\lambda$ has $\ell(\lambda)=n$ parts, then we will use the
convention that $\lambda_i=0$ for all $i>n$. For two partitions $\lambda$ and
$\mu$, define
\[
\lambda+\mu=(\lambda_1+\mu_1,\lambda_2+\mu_2,\dots,\lambda_N+\mu_N),
\]
where $N=\max(\ell(\lambda),\ell(\mu))$.

The \emph{Young diagram} of a partition $\lambda=(\lambda_1,\dots,\lambda_k)$ is
the top-left justified array of unit squares (or cells) in which the $i$th row has
$\lambda_i$ squares. If $\lambda$ is strict, the \emph{shifted Young diagram} of
$\lambda$ is the array obtained from the Young diagram of $\lambda$ by shifting
the $i$th row to the right by $i-1$ units for each $1\le i\le \ell(\lambda)$.
See Figure~\ref{fig:YD}. We will identify a partition with its Young diagram (or
its shifted Young diagram if it is strict).

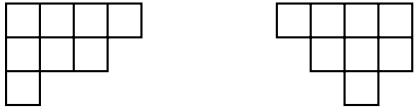
\begin{figure}
  \centering
  \begin{tikzpicture}[scale=.45]
\draw[thick] (0,0)--(1,0)--(1,1)--(3,1)--(3,2)--(4,2)--(4,3)--(0,3)--(0,0)--cycle;
\draw[thick] (0,1)--(1,1)
		(0,2)--(3,2)
		(1,1)--(1,3)
		(2,1)--(2,3)
		(3,2)--(3,3);
\draw[thick] (8,3)--(8,2)--(9,2)--(9,1)--(10,1)--(10,0)--(11,0)--(11,1)--(12,1)--(12,3)--(8,3)--cycle
		(9,2)--(12,2)
		(10,1)--(11,1)
		(9,2)--(9,3)
		(10,1)--(10,3)
		(11,1)--(11,3);
\end{tikzpicture}
  \caption{The Young diagram (left) and the shifted Young diagram (right) of
    $\lambda=(4,3,1)$.}
  \label{fig:YD}
\end{figure}

\emph{Young's lattice} is the poset in which the elements are all partitions and
the relations are given by the inclusion of their Young diagrams, i.e., $\mu\le
\lambda$ if and only if the Young diagram of $\mu$ is contained in that of
$\lambda$. Similarly, the \emph{shifted Young's lattice} is the poset in which
the elements are all strict partitions and the relations are given by the
inclusion of their shifted Young diagrams. In this paper we are interested in
whether a given lower interval $[\emptyset, \lambda]$ in Young's lattice
or in the shifted Young's lattice has the CDE property. 

The CDE property of a poset was first observed by Chan, L\'opez Mart\'in,
Pflueger and Teixidor i Bigas \cite[Remark~2.17]{CMP2018}. They showed that the
lower interval $[\emptyset,\lambda]$ in Young's lattice for a rectangular shape
$\lambda=(a^b)=(a,a,\dots,a)$ with $b$ rows of length $a$ has the CDE property
and the expectations are given by $\mathbb{E}(X)=\mathbb{E}(Y)=\frac{ab}{a+b}$.
This result played an important role when they reproved a formula for the genera
of Brill--Noether curves due to Eisenbud--Harris \cite{EH1987} and Pirola
\cite{Pirola1985}. Chan, Haddadan, Hopkins and Moci \cite{CHHM2017} generalized
the CDE property of a rectangular shape to a much broader class of partitions,
which we now describe.

Let $\mu$ be a Young diagram. An \emph{inner corner} of $\mu$ is a cell
$x\not\in \mu$ such that $\mu\cup\{x\}$ is a Young diagram. We say that $\mu$ is
\emph{balanced} if the left cell of every inner corner of $\mu$ lies on
the line connecting the top right corner of the last cell in the first row of
$\mu$ and the bottom left corner of the last cell in the first column of $\mu$.
If in addition the line has slope $m$, then $\mu$ is \emph{balanced of slope
  $m$}. See Figure~\ref{fig:balanced} for an example of a balanced Young
diagram.

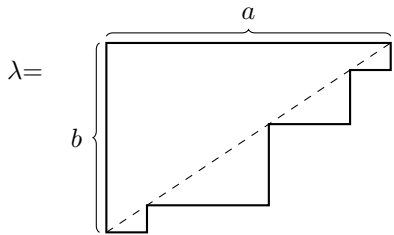
\begin{figure}
$$
    \begin{tikzpicture}[scale=.18]
    \draw[thick] (0,0)--(3,0)--(3,2)--(12,2)--(12,8)--(18,8)--(18,12)--(21,12)--(21,14)--(0,14)--(0,0)--cycle;
    \draw[dashed] (0,0)--(21,14);
    \draw [snake=brace] (-.5,0)--(-.5,14)
    				(0, 14.5)--(21,14.5);
   \node[] at (-2.2, 7) {$b$};
   \node[] at (10.5, 16.2) {$a$};
   \node[] at (-6, 12) {$\lambda$=};
    \end{tikzpicture}$$
  \caption{A typical shape of a balanced Young diagram of slope $b/a$.}
  \label{fig:balanced}
\end{figure}

\begin{thm}\cite[Corollary~3.8]{CHHM2017}
  \label{thm:CHHM}
  Let $\lambda$ be a balanced Young diagram of slope $m$. Then
  $[\emptyset,\lambda]$ has the CDE property and
  \[
\mathbb{E}(X) = \mathbb{E}(Y) = \frac{\lambda_1}{1+m^{-1}}.
  \]
\end{thm}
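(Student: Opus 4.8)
The plan is to establish the two equalities $\mathbb{E}(X)=\lambda_1/(1+m^{-1})$ and $\mathbb{E}(Y)=\lambda_1/(1+m^{-1})$ separately, after first recasting both expectations combinatorially. Identify an element $\mu$ of $[\emptyset,\lambda]$ with a sub-diagram $\mu\subseteq\lambda$; then $\ddeg(\mu)$ is the number of removable (corner) cells of $\mu$, and the number of maximal chains of $[\emptyset,\lambda]$ through $\mu$ equals $f^\mu f^{\lambda/\mu}$, where $f^\nu:=|\SYT(\nu)|$. Since $\sum_{\mu\subseteq\lambda}f^\mu f^{\lambda/\mu}=(n+1)f^\lambda$ with $n=|\lambda|$ (each level-$k$ contribution sums to $f^\lambda$), this gives
\[
\mathbb{E}(X)=\frac{\#\{\text{edges of }[\emptyset,\lambda]\}}{\#\{\text{sub-diagrams of }\lambda\}},\qquad
\mathbb{E}(Y)=\frac{1}{(n+1)f^\lambda}\sum_{\mu\subseteq\lambda}\ddeg(\mu)\,f^\mu f^{\lambda/\mu}.
\]

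For $\mathbb{E}(Y)$ I would first observe that the last numerator equals $|\BSYT(\lambda)|$: the map sending a standard barely set-valued tableau with doubled cell $d$ holding $\{a,a'\}$, $a<a'$, to the triple consisting of the standard Young tableau obtained by deleting $a'$ and decreasing the entries exceeding $a'$, the index $i=a'-1$, and the corner $d$ of the set $\mu^i$ of cells with entry $\le i$, is a bijection onto $\{(T,i,c):T\in\SYT(\lambda),\ 1\le i\le n,\ c\text{ a corner of }\mu^i(T)\}$. Hence $\mathbb{E}(Y)=|\BSYT(\lambda)|/((|\lambda|+1)f^\lambda)$. I would then invoke the uncrowding formula of Reiner--Tenner--Yong, which expresses $|\BSYT(\lambda)|=\sum_{c}(r_c-1)\,f^{\lambda\cup\{c\}}$, the sum over addable corners $c$ of $\lambda$ in row $r_c$. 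Using the hook length formula, $f^{\lambda\cup\{c\}}/f^\lambda=(n+1)\prod_{x}\frac{h_\lambda(x)}{h_\lambda(x)+1}$, the product over cells $x$ of $\lambda$ in the row or column of $c$; so $\mathbb{E}(Y)=\sum_{c}(r_c-1)\prod_{x}\frac{h_\lambda(x)}{h_\lambda(x)+1}$, a short sum of telescoping hook-ratio products over the addable corners. For a rectangle $(a^b)$ only the corner below the last row survives and the product collapses to $a/(a+b)$, giving $\mathbb{E}(Y)=ba/(a+b)$; the balanced hypothesis is exactly what makes the analogous telescoping produce $\lambda_1/(1+m^{-1})$ in general.

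For $\mathbb{E}(X)$ I would encode a sub-diagram $\mu\subseteq\lambda$ by its boundary lattice path inside $\lambda$; an edge of $[\emptyset,\lambda]$ then corresponds to such a path with a marked ``down--right'' turn (a corner of $\mu$). Counting (path, marked turn) pairs gives the edge count, and the ratio to the number of paths is to be evaluated in closed form (equivalently, one can run a toggle/active-element symmetry argument, since $\sum_\mu\ddeg(\mu)=\sum_\mu\operatorname{updeg}_\lambda(\mu)$ by counting edges at their tops and at their bottoms). The point is that the balanced hypothesis pins every relevant turn to the fixed line of slope $m$, which makes numerator and denominator factor compatibly and yields $\#\text{edges}/\#\text{sub-diagrams}=\lambda_1/(1+m^{-1})$; for the rectangle this recovers $(a+b-1)\binom{a+b-2}{a-1}\big/\binom{a+b}{a}=ab/(a+b)$, in agreement with the known rectangular case.

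The main obstacle is the evaluation step for $\mathbb{E}(Y)$ (equivalently, for $|\BSYT(\lambda)|$). Unlike the $\mathbb{E}(X)$ count, where ``balanced'' enters transparently through the geometry of the boundary path, here one must show that a sum of hook-length products over the addable corners of an \emph{arbitrary} balanced shape collapses to the single value $\lambda_1/(1+m^{-1})$, and tracking how the hooks change along the diagonal is delicate. A viable route is to reduce, if possible, to the already-proved rectangular case by interpolating in the ``balanced'' linear constraints, but more realistically one wants a uniform telescoping/integral-representation argument in place of a case analysis; organizing that cancellation is precisely where the real work lies.
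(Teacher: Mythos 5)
This statement is imported by the paper from \cite[Corollary~3.8]{CHHM2017} and is not proved here, so there is no in-paper argument to compare against; I can only judge your proposal on its own terms. Your reductions are correct: $\mathbb{E}(X)$ is the number of cover relations divided by the number of sub-diagrams; $\mathbb{E}(Y)=f^\lambda(+1)/\bigl((|\lambda|+1)f^\lambda\bigr)$ via the standard bijection and $\sum_\mu f^\mu f^{\lambda/\mu}=(|\lambda|+1)f^\lambda$; the uncrowding formula together with the hook-length ratio $f^{\lambda\cup\{c\}}/f^\lambda$ turns $\mathbb{E}(Y)$ into a finite sum of hook-ratio products over addable corners (this is precisely the content of Theorem~\ref{thm:f+1} in the present paper); and your rectangle computations on both sides check out.

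The genuine gap is that for a general balanced shape neither closed-form evaluation is actually carried out, and these evaluations \emph{are} the theorem. For $\mathbb{E}(Y)$, the sum $\sum_c (r_c-1)\prod_x h_\lambda(x)/(h_\lambda(x)+1)$ has one term per addable corner, each a genuinely different product; it does not telescope term by term, and collapsing it to $\lambda_1/(1+m^{-1})$ requires a partial-fraction or Lagrange-interpolation identity of the kind the present paper deploys for the shifted analogue in the proof of Theorem~\ref{thm:thm4.2} (equation~\eqref{eq:li}). You explicitly defer this, so the $\mathbb{E}(Y)$ half is a program, not a proof. For $\mathbb{E}(X)$ the situation is worse: the justification offered — ``the balanced hypothesis pins every relevant turn to the fixed line of slope $m$'' — is not true of the down--right turns of an arbitrary $\mu\subseteq\lambda$ (only the inner corners of $\lambda$ itself sit near that line), and the identity $\sum_\mu\ddeg(\mu)=\sum_\mu\operatorname{updeg}(\mu)$ is just the statement that both sides count edges, so it evaluates nothing. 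The actual argument in \cite{CHHM2017} runs through the stronger tCDE/toggleability framework (a signed statistic on cells whose expectation is computable and which dominates the down-degree), and none of that machinery, or a substitute for it, appears in your sketch. As written, the proposal proves the rectangle case and correctly reformulates the general case, but does not close either half for an arbitrary balanced $\lambda$.
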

We note that Chan, Haddadan, Hopkins and Moci \cite{CHHM2017} in fact proved the
CDE property of more general balanced skew shapes. Related further results were
achieved by Hopkins~\cite{Hopkins2019} who in particular proved a conjecture by
Reiner, Tenner and Yong \cite[Conjecture~1.2]{RTY2018} on the CDE property
for vexillary words of a particular shape.

Interestingly, as Hopkins \cite[Remark 3.7]{Hopkins2017} observed and checked
for all partitions of size at most $30$, it seems that the converse of
Theorem~\ref{thm:CHHM} is also true.

\begin{conj}\label{conj:Hopkins1}
  Let $\lambda$ be a Young diagram. Then $[\emptyset,\lambda]$ has the CDE
  property if and only if $\lambda$ is balanced (of any slope).
\end{conj}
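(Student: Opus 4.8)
The direction ``balanced $\Rightarrow$ CDE'' is Theorem~\ref{thm:CHHM}, so the real content is the converse, which I would attack by first turning the CDE property into a single enumerative identity. Recording a maximal chain of $[\emptyset,\lambda]$ as a standard Young tableau, the number of maximal chains through $\mu\subseteq\lambda$ is $f^\mu f^{\lambda/\mu}$, and since $\sum_{\mu\subseteq\lambda}f^\mu f^{\lambda/\mu}=(|\lambda|+1)f^\lambda$ (choose an SYT of $\lambda$ and a breakpoint), we get $\EE(Y)=\bigl(\sum_{\mu\subseteq\lambda}\ddeg(\mu)f^\mu f^{\lambda/\mu}\bigr)/\bigl((|\lambda|+1)f^\lambda\bigr)$. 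A short bijective argument in the spirit of the Reiner--Tenner--Yong uncrowding map --- delete the entry $|\lambda|+1$: either it is a singleton in a removable corner $x$, recursing on $\lambda\setminus x$, or it lies in the doubled cell, which is then forced to be a removable corner and leaves behind an arbitrary SYT of $\lambda$ --- shows that $\sum_{\mu\subseteq\lambda}\ddeg(\mu)f^\mu f^{\lambda/\mu}$ equals $N(\lambda)$, the number of standard barely set-valued tableaux of shape $\lambda$. Since $\EE(X)$ is simply $e(\lambda)/v(\lambda)$, with $v(\lambda)$ and $e(\lambda)$ the numbers of vertices and edges of the Hasse diagram of $[\emptyset,\lambda]$, the CDE property for $[\emptyset,\lambda]$ becomes equivalent to
\[
(|\lambda|+1)\,f^\lambda\, e(\lambda)\;=\;N(\lambda)\, v(\lambda).
\]

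Next I would feed in explicit formulas: $f^\lambda$ from the hook-length formula; $v(\lambda)$ (the number of sub-diagrams of $\lambda$) and $e(\lambda)$ as Lindstr\"om--Gessel--Viennot determinants in the parts $\lambda_i$; and $N(\lambda)$ from the Reiner--Tenner--Yong uncrowding formula. This converts the displayed equation into an explicit algebraic identity in $\lambda_1,\dots,\lambda_{\ell(\lambda)}$, and the goal becomes to show it holds precisely when the left cell of every inner corner sits on the balancing line. I would try induction on the number of distinct part-sizes of $\lambda$, equivalently on the number of corners. The base case --- one corner, i.e.\ a rectangle --- is covered by Theorem~\ref{thm:CHHM}, and balancedness there is automatic. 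For the inductive step I would analyze how $\EE(X)-\EE(Y)$ changes when one pushes a single corner off (or back onto) the balancing line by one cell, trying to show this change never vanishes, so that among all shapes with a fixed corner structure only the balanced one can be CDE.

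The main obstacle is exactly this rigidity step: proving that a deviation from the balancing line \emph{forces} $\EE(X)\neq\EE(Y)$ rather than an accidental cancellation among the determinants for $v(\lambda)$, $e(\lambda)$, $N(\lambda)$ and the hook product. Each of these quantities is individually well understood, but I see no evident sign, log-concavity, or monotonicity reason why their particular combination in the CDE identity should move in one direction under a corner perturbation; and the defect $\EE(X)-\EE(Y)$ for a non-balanced shape can be minuscule (already for $\lambda=(3,1)$ it equals $\tfrac1{105}$), so a delicate argument is needed. A natural way to inject rigidity is to $q$-deform the whole setup --- weight each $\mu\subseteq\lambda$ by $q^{|\mu|}$, so that $\EE(X)$ and $\EE(Y)$ become rational functions of $q$ and, via the $q$-integral evaluations developed in this paper, the $N(\lambda)$ side acquires a product or $q$-integral form --- and hope that the resulting polynomial identity in $q$ is rigid enough (for instance positivity of some coefficient) that equality at $q=1$ already forces balancedness. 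I would pursue that $q$-route before attempting the bare $q=1$ determinantal identity, and would also check whether Hopkins's techniques from \cite{Hopkins2017} for the shifted-balanced case can be adapted to supply the corner-perturbation estimate directly.
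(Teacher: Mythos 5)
The statement you are addressing is stated in the paper as a \emph{conjecture} (attributed to Hopkins, \cite[Remark 3.7]{Hopkins2017}, verified there for all partitions of size at most $30$); the paper offers no proof of it, and it remains open. Your writeup is therefore best judged as a research plan rather than a proof, and by your own admission it is not complete. The parts you do carry out are correct: the ``balanced $\Rightarrow$ CDE'' direction is indeed Theorem~\ref{thm:CHHM}; the identification of $\sum_{\mu\subseteq\lambda}\ddeg(\mu)f^\mu f^{\lambda/\mu}$ with $f^\lambda(+1)$ via deleting the entry $|\lambda|+1$ is exactly the content of \eqref{eq:eyf} from \cite{RTY2018}; and your reformulation of CDE as $(|\lambda|+1)f^\lambda e(\lambda)=N(\lambda)v(\lambda)$, together with the numerical check $\EE(X)-\EE(Y)=\tfrac{1}{105}$ for $\lambda=(3,1)$, is accurate.

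The genuine gap is the one you name yourself: the entire content of the conjecture is the ``only if'' direction, and neither the corner-perturbation induction nor the proposed $q$-deformation is carried out. The induction scheme as sketched has a structural problem beyond the missing estimate: perturbing one corner of a balanced shape generally does not stay within a family where the inductive hypothesis applies (the intermediate shapes are neither balanced nor of smaller corner count), so ``only the balanced shape in a fixed corner structure can be CDE'' does not reduce to smaller cases in any evident way. One would need a global argument --- e.g.\ a sign or monotonicity statement for $\EE(X)-\EE(Y)$ as a function on the set of shapes with prescribed corner columns --- and you give no candidate for such a statement, only the hope that one exists. Until that rigidity step is supplied, the proposal establishes nothing beyond what Theorem~\ref{thm:CHHM} and \cite{RTY2018} already give, and the conjecture remains exactly as open as the paper leaves it.
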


Reiner, Tenner and Yong \cite{RTY2018} also considered the CDE property of a
lower interval of the shifted Young's lattice. In what follows we describe their
conjectures on certain shifted Young diagrams.

Let $\lambda=(\lambda_1,\dots,\lambda_n)$ be a strict partition with $n$ parts.
Define $\mu$ to be the Young diagram given by
\[
  \mu=
\begin{cases}
\lambda-\delta_{n+1} = (\lambda_1-n, \lambda_2-n+1,\dots,\lambda_n-1),& \mbox{if $\lambda_n=1$,}\\
\lambda-\delta_{n} =(\lambda_1-n+1, \lambda_2-n+2,\dots,\lambda_n),& \mbox{if $\lambda_n>1$,}
\end{cases}
\]
where $\delta_n=(n-1,n-2,\dots,1,0)$. Note that if $\lambda_n=1$, then
$\ell(\mu)<n$. We say that the shifted Young diagram of $\lambda$ is \emph{balanced} if $\mu$ is a balanced
Young diagram of slope $1$. See Figure~\ref{fig:shifted_balanced} for examples
of balanced shifted Young diagrams.

\begin{figure}[ht]
\centering
\begin{tikzpicture}[scale=.5]
\draw[fill=gray!40] (4,2) rectangle (5,4);
\draw[fill=gray!40] (5,3) rectangle (6,4);
\draw (0,4)--(0,3)--(1,3)--(1,2)--(2,2)--(2,1)--(3,1)--(3,0)--(4,0)--(4,2)--(5,2)--(5,3)--(6,3)--(6,4)--(0,4)--cycle;
\draw (1,3)--(5,3)
	(2,2)--(4,2)
	(3,1)--(4,1)
	(1,3)--(1,4)
	(2,2)--(2,4)
	(3,1)--(3,4)
	(4,2)--(4,4)
	(5,3)--(5,4);
\draw[dashed] (4,2)--(6,4);
\end{tikzpicture}\qquad\qquad
\begin{tikzpicture}[scale=.5]
\draw[fill=gray!40] (3,0) rectangle (6,4);
\draw[fill=gray!40] (6,3) rectangle (7,4);
\draw (0,4)--(0,3)--(1,3)--(1,2)--(2,2)--(2,1)--(3,1)--(3,0)--(6,0)--(6,3)--(7,3)--(7,4)--(0,4)--cycle;
\draw (1,3)--(6,3)
	(2,2)--(6,2)
	(3,1)--(6,1)
	(1,3)--(1,4)
	(2,2)--(2,4)
	(3,1)--(3,4)
	(4,0)--(4,4)
	(5,0)--(5,4)
	(6,3)--(6,4);
\draw[dashed] (3,0)--(7,4);
\end{tikzpicture}
  \caption{Examples of balanced shifted Young diagrams. On the left
    $\mu=\lambda-\delta_5=(2,1)$ and on right 
    $\mu=\lambda-\delta_4=(4,3,3,3)$.}
  \label{fig:shifted_balanced}
\end{figure}
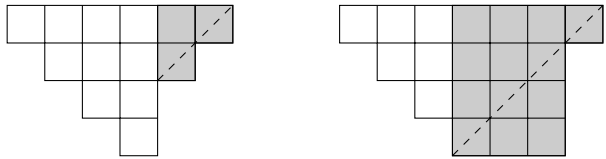

Reiner, Tenner and Yong conjectured the following CDE property for a special
class of strict partitions. Here, $\delta_e (a^a)$ is the Young diagram
obtained from that of $\delta_e$ by replacing each cell by the $a\times a$
square $(a^a)$, see Figure~\ref{fig:circ}. 

\begin{figure}
\begin{tikzpicture}[scale=.43]
\draw[fill=gray!40] (8,2) rectangle (10,8);
\draw[fill=gray!40] (10,4) rectangle (12,8);
\draw[fill=gray!40] (12,6) rectangle (14,8);
\draw (0,8)--(14,8)--(14,6)--(12,6)--(12,4)--(10,4)--(10,2)--(8,2)--(8,0)--(7,0)--(7,1)--(6,1)--(6,2)--(5,2)--(5,3)--(4,3)--(4,4)--(3,4)--(3,5)--(2,5)--(2,6)--(1,6)--(1,7)--(0,7)--(0,8)--cycle;
\draw (1,7)--(14,7);
\draw (2,6)--(12,6);
\draw (3,5)--(12,5);
\draw (4,4)--(10,4);
\draw (5,3)--(10,3);
\draw (6,2)--(8,2);
\draw (7,1)--(8,1);
\draw (1,7)--(1,8);
\draw (2,6)--(2,8);
\draw (3,5)--(3,8);
\draw (4,4)--(4,8);
\draw (5,3)--(5,8);
\draw (6,2)--(6,8);
\draw (7,1)--(7,8);
\draw (8,0)--(8,8);
\draw (10,2)--(10,8);
\draw (9,2)--(9,8);
\draw (11,4)--(11,8);
\draw (12,4)--(12,8);
\draw (13,6)--(13,8);
\end{tikzpicture}
\caption{The shifted Young diagram of $\lambda=\delta_9 +\delta_4(2^2)$.}
\label{fig:circ}
\end{figure}
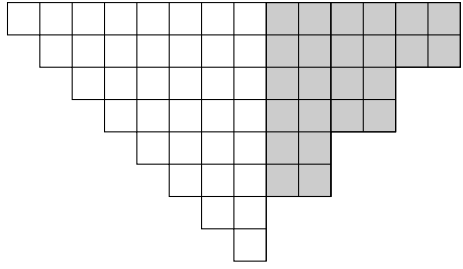

\begin{thm}\cite[Theorem~2.23]{RTY2018}
  \label{thm:RTY1}
  Let $\lambda$ be the shifted shape $\delta_d +\delta_e  (a^a)$ with
  $a,d,e\ge 1$ and $d>a(e-1)+1$. Then the interval $[\emptyset,\lambda]$ has the
  CDE property and
\begin{equation}
  \label{eq:RTY}
\mathbb{E}(X) = \mathbb{E}(Y) = \frac{\lambda_1+1}{4}.
\end{equation}
\end{thm}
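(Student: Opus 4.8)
\emph{Proof sketch.} The plan is to derive Theorem~\ref{thm:RTY1} from the general formula for the number of standard barely set-valued tableaux of an arbitrary shifted shape (the main result of this paper), together with the translation of the CDE statistics of a lower interval of the shifted Young's lattice into tableau counts. Write $g^\mu$ (resp.\ $g^{\lambda/\mu}$) for the number of standard shifted tableaux of strict shape $\mu$ (resp.\ of skew shifted shape $\lambda/\mu$). For $P=[\emptyset,\lambda]$ in the shifted Young's lattice, $\EE(X)$ equals the average of $\ddeg(\mu)$ over all strict $\mu\subseteq\lambda$, that is, the ratio of the number of cover relations of $P$ to the number of its elements. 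On the other hand, grouping the maximal chains of $P$ by the subshape they pass through at each rank yields $\sum_{\mu\subseteq\lambda}g^\mu g^{\lambda/\mu}=(|\lambda|+1)g^\lambda$, while a shifted analogue of the Reiner--Tenner--Yong uncrowding algorithm identifies $\sum_{\mu\subseteq\lambda}\ddeg(\mu)\,g^\mu g^{\lambda/\mu}$ with $\BSYT(\lambda)$, the number of standard barely set-valued tableaux of shifted shape $\lambda$; hence
\[
\EE(Y)=\frac{\BSYT(\lambda)}{(|\lambda|+1)\,g^\lambda}.
\]
So it suffices to evaluate $\BSYT(\lambda)$ for $\lambda=\delta_d+\delta_e(a^a)$ and to verify that the two expectations agree.

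First I would note that $\delta_e(a^a)$ has $a(e-1)$ parts, so the hypothesis $d>a(e-1)+1$ forces $\lambda$ to be a genuine strict partition whose ``staircase tail'' $\lambda_{a(e-1)+1}>\lambda_{a(e-1)+2}>\cdots$ consists of $d-a(e-1)\ge2$ consecutive integers; in particular $\lambda$ is a balanced shifted Young diagram of slope $1$ (it is precisely the type of shape drawn in Figure~\ref{fig:circ}), and $\lambda_1=d-1+a(e-1)$. Next I would apply the paper's main formula, which expresses $\BSYT(\lambda)$ as a sum over the inner corners $x$ of the shifted diagram of $\lambda$ of terms $w_x\,g^{\lambda\cup\{x\}}$ with explicit weights $w_x$, and substitute Schur's product formula for $g^{\lambda\cup\{x\}}$ and for $g^\lambda$. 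This turns $\BSYT(\lambda)/((|\lambda|+1)g^\lambda)$ into a sum over the inner corners of $\lambda$ of ratios of products of factorials, which is exactly of the form handled by the $q$-integral evaluations of \cite{KimStanton17}: it is the $q\to1$ specialization of a $q$-integral admitting an explicit product evaluation. Carrying this out and simplifying should collapse the expression to $(\lambda_1+1)/4$, giving $\EE(Y)=(\lambda_1+1)/4$.

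For the equality $\EE(X)=\EE(Y)$ there are two routes. One is to enumerate $[\emptyset,\lambda]$ directly: the strict partitions contained in $\delta_d+\delta_e(a^a)$ and the cover relations among them are governed by the block structure of the shape, so both the number of elements and the number of cover relations, and hence their ratio, can be obtained by a transfer-matrix or generating-function argument and checked to equal $(\lambda_1+1)/4$. The other is to observe that $\lambda$ is shifted-balanced and invoke the CDE property of such shapes established by Hopkins~\cite{Hopkins2017}, which gives $\EE(X)=\EE(Y)$ immediately, the common value then being supplied by the computation of $\EE(Y)$ above. The second route is cleaner when it applies; the first is unconditional.

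The step I expect to be the main obstacle is the closed-form evaluation of $\BSYT(\lambda)$. After Schur's formula is inserted, one is left with a sum over the inner corners of $\lambda$ of ratios of products of factorials, and collapsing this to a single closed product genuinely requires recognizing the sum as (a limit of) a $q$-beta-type integral and applying the $q$-integral identities of \cite{KimStanton17}; this is the technical heart of the argument. A secondary difficulty is the bookkeeping forced by the two-case definition of the associated unshifted shape $\mu$ (according to whether $\lambda_n=1$ or $\lambda_n>1$) and by pinning down exactly which inner corners of $\lambda$ occur and with which weights $w_x$---and it is precisely the inequality $d>a(e-1)+1$ that makes this data uniform.
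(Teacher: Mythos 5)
Your overall architecture coincides with the paper's: first reduce to $\EE(Y)=g^\lambda(+1)/\bigl((|\lambda|+1)g^\lambda\bigr)$ via the shifted uncrowding argument, then feed the shape $\delta_d+\delta_e(a^a)$ into the general corner-sum formula (Proposition~\ref{prop:g+1}) together with Thrall's product formula \eqref{eqn:Thrall}, and finally obtain $\EE(X)=\EE(Y)$ from the fact that this shape is shifted-balanced, so Theorem~\ref{thm:shifted-balanced} applies. That last move is also effectively what the paper does: Section~\ref{sec:enum-stand-barely} re-proves only the $\EE(Y)$ formula (Theorem~\ref{thm:conj24}); the coincidence of the two expectations is inherited from the balanced-shifted case rather than re-derived. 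Your identification $\lambda_1=d-1+a(e-1)$ and the role of the hypothesis $d>a(e-1)+1$ are correct.

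The genuine gap sits exactly where you flag the ``technical heart.'' The corner sum one is left with (identity \eqref{eqn:identitytoshow1}, a sum over the $e$ northeast corners $(ja+1,\cdot)$, $0\le j\le e-1$) is \emph{not} closed by the $q$-integral identities of \cite{KimStanton17}: that machinery is used only in Section~\ref{sec:proof} to derive the corner-sum formula itself, not to evaluate any particular instance of it. What actually collapses the sum is a classical hypergeometric summation theorem. Concretely, the weight $2j-2n-1-\lambda_i$ from Proposition~\ref{prop:g+1} is constant over all northeast corners precisely because the shape is shifted-balanced, so it factors out and the target identity becomes the statement that a normalized corner sum equals $1/2$; the remaining series is a terminating very-well-poised ${}_5F_4$, summed by Dougall's theorem \cite[(2.1.7)]{GR}. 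Without supplying this (or an equivalent) summation, the evaluation of $\EE(Y)$ does not go through, and ``recognize it as a $q$-beta-type integral'' points at the wrong toolbox. Your alternative ``unconditional'' route to $\EE(X)$ by direct transfer-matrix enumeration of $[\emptyset,\lambda]$ is likewise not developed; the paper carries out such a direct count only for trapezoidal shapes, where it already demands nontrivial lattice-path and hypergeometric input, so one should not expect it to be routine here.
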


Hopkins proved the following generalization of Theorem~\ref{thm:RTY1}.

\begin{thm}\cite[Theorem~4.2]{Hopkins2017}
 \label{thm:shifted-balanced}
 Let $\lambda$ be a balanced shifted Young diagram. Then the interval
 $[\emptyset,\lambda]$ has the CDE property and
  \[
\mathbb{E}(X) = \mathbb{E}(Y) = \frac{\lambda_1+1}{4}.
  \]
\end{thm}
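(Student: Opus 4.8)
The plan is to first convert the CDE statement into an enumerative identity, and then to establish that identity for balanced shifted shapes by an explicit computation. For a strict partition $\lambda$, the maximal chains of $[\emptyset,\lambda]$ are exactly the standard shifted Young tableaux of shape $\lambda$; write $g^\lambda$ for their number, and more generally $g^{\lambda/\mu}$ for the number of standard skew shifted tableaux of shape $\lambda/\mu$. Weighting each $\mu\subseteq\lambda$ by the number $g^\mu g^{\lambda/\mu}$ of maximal chains through it, and using that every maximal chain has $|\lambda|+1$ elements, one gets
\[
\mathbb{E}(Y)=\frac{1}{(|\lambda|+1)\,g^\lambda}\sum_{\mu\subseteq\lambda}\ddeg(\mu)\,g^\mu\,g^{\lambda/\mu},
\qquad
\mathbb{E}(X)=\frac{1}{p(\lambda)}\sum_{\mu\subseteq\lambda}\ddeg(\mu),
\]
where $p(\lambda)=\#\{\mu\in\Par^*:\mu\subseteq\lambda\}$. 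A shifted uncrowding correspondence in the spirit of \cite{RTY2018} identifies $\sum_{\mu\subseteq\lambda}\ddeg(\mu)\,g^\mu\,g^{\lambda/\mu}$ with $\#\BSYT(\lambda)$, the number of standard barely set-valued tableaux of shifted shape $\lambda$. So the theorem is equivalent to the pair of identities $\#\BSYT(\lambda)=\tfrac{\lambda_1+1}{4}(|\lambda|+1)g^\lambda$ and $\sum_{\mu\subseteq\lambda}\ddeg(\mu)=\tfrac{\lambda_1+1}{4}\,p(\lambda)$.

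The heart of the argument is the first identity, and I would not attack it only for balanced $\lambda$: instead I would first derive a formula for $\#\BSYT(\lambda)$ valid for an \emph{arbitrary} shifted shape $\lambda$. Running a modified uncrowding algorithm records a standard barely set-valued tableau as a standard shifted tableau of $\lambda$ together with the datum of which pocket along the inner boundary of $\lambda$ the superfluous entry was pushed into; summing over this datum turns $\#\BSYT(\lambda)$ into a sum, indexed by the cells (or inner corners) of $\lambda$, of products of shifted-hook-length type quantities. Sums of exactly this form are what the $q$-integral calculus of \cite{KimStanton17} is built to evaluate: I would rewrite the sum as a $q$-integral, evaluate it in closed form, and specialize $q\to1$. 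Then I would impose the balancedness hypothesis — that $\mu=\lambda-\delta_n$ (if $\lambda_n>1$) or $\mu=\lambda-\delta_{n+1}$ (if $\lambda_n=1$) is a balanced Young diagram of slope $1$ — which geometrically forces the inner boundary of $\lambda$ to lie along a line of slope $1$; this is precisely the degeneration that makes the evaluated expression collapse to $\tfrac{\lambda_1+1}{4}(|\lambda|+1)g^\lambda$, recovering and generalizing Theorem~\ref{thm:RTY1}. The second identity, being purely about the Hasse diagram of the interval, can be obtained more cheaply — either as a by-product of applying the same $q\to1$ evaluation to $\sum_\mu\ddeg(\mu)$ and $\sum_\mu 1$, or by a toggle/complementation bijection on the cover relations of $[\emptyset,\lambda]$ that exploits the same straight-line inner boundary.

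The step I expect to be the real obstacle is the closed-form evaluation and its simplification under the balanced hypothesis. Balanced shifted shapes are far more general than the shapes $\delta_d+\delta_e(a^a)$ of Theorem~\ref{thm:RTY1}; their inner boundaries are staircases with arbitrarily many corners, so there is no short bijective route from $\#\BSYT(\lambda)$ to the target value, and one genuinely needs a formula that exposes the dependence on all the corner data at once — hence the $q$-integral. Getting that evaluation into a form where the slope-$1$ condition visibly kills every error term is where the delicate work lies. A secondary nuisance is that the uncrowding bijection must be re-engineered for shifted shapes, since cells on the main diagonal behave differently from off-diagonal cells; care is needed there so that the resulting summation formula, and the $q$-integral representing it, are set up correctly.
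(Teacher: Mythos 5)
Your treatment of the $\EE(Y)$ half is, in outline, exactly what the paper does: reduce $\EE(Y)$ to $g^\lambda(+1)/((|\lambda|+1)g^\lambda)$, prove a formula for $g^\lambda(+1)$ valid for \emph{every} shifted shape via a shifted uncrowding algorithm combined with the $q$-integral calculus of \cite{KimStanton17} (this is Proposition~\ref{prop:g+1} and Theorem~\ref{thm:main}), and then specialize to balanced shapes (Theorem~\ref{thm:thm4.2}). Two points of comparison. The collapse under balancedness is not a soft ``the slope-$1$ condition kills the error terms'': after parametrizing the northeast corners of the staircase by block sizes $a_1,\dots,a_\ell$, the sum in Theorem~\ref{thm:main} is recognized as an instance of the Lagrange interpolation / partial-fraction identity \eqref{eq:li}, and that identity \emph{is} the proof; you have correctly located the hard step but left it undone. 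Also, the diagonal complication you flag is resolved in the paper not inside the bijection but by Lemma~\ref{lem:SSYT_sum2}, which uses the Pieri rule inside a $q$-integral to convert the below-diagonal contributions $g^{\lambda\cup\{(i+1,i)\}}$ back into northeast-corner terms; some such device is unavoidable, and your sketch does not supply one.

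The genuine gap is the $\EE(X)$ half, i.e.\ the identity $\sum_{\mu\subseteq\lambda}\ddeg(\mu)=\frac{\lambda_1+1}{4}\,p(\lambda)$ (your $p(\lambda)$ is the paper's $R(\lambda)$). Neither of your two suggested shortcuts works. The $q$-integral machinery computes generating functions over semistandard and standard tableaux, i.e.\ chain-weighted sums over $[\emptyset,\lambda]$ --- that is precisely why it yields $\EE(Y)$ --- and it gives no handle on the uniform counts $R(\lambda)$ and $R^{(+1)}(\lambda)$; there is no product formula for the number of shifted shapes contained in a general balanced shifted shape, so these do not come out ``as a by-product.'' Even in the far more rigid trapezoidal case the paper needs a separate argument for exactly these two quantities: a reflection-principle lattice-path count for $R(\lambda)$, a Catalan-number decomposition for $R^{(+1)}(\lambda)$, and Bailey's ${}_4F_3$ summation (Lemmas~\ref{lem:R(la)} and \ref{lem:R+(la)}). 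Your second route, a ``toggle/complementation bijection on the cover relations,'' is not a cheap by-product either: it is Hopkins' actual proof of Theorem~\ref{thm:shifted-balanced} via the tCDE property in \cite{Hopkins2017}, which is the substantive content being claimed. Indeed the paper itself does not reprove the $\EE(X)$ part for balanced shifted shapes; it proves only the $\EE(Y)$ formula and cites \cite{Hopkins2017} for the rest. As written, your proposal establishes (modulo the Lagrange-interpolation identity) one of the two expectations and asserts the other.
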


\begin{figure}
\begin{tikzpicture}[scale=.45]
\draw (0,4)--(0,3)--(1,3)--(1,2)--(2,2)--(2,1)--(3,1)--(3,0)--(7,0)--(7,1)--(8,1)--(8,2)--(9,2)--(9,3)--(10,3)--(10,4)--(0,4)--cycle;
\draw (1,3)--(9,3)
	(2,2)--(8,2)
	(3,1)--(7,1)
	(1,3)--(1,4)
	(2,2)--(2,4)
	(3,1)--(3,4)
	(4,0)--(4,4)
	(5,0)--(5,4)
	(6,0)--(6,4)
	(7,1)--(7,4)
	(8,2)--(8,4)
	(9,3)--(9,4);
\end{tikzpicture}
\caption{A trapezoidal shifted Young diagram with $n=10$, $k=4$.}
  \label{fig:trape}
\end{figure}
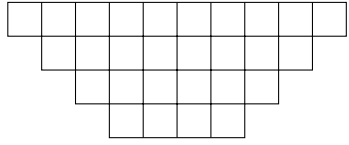

A shifted Young diagram is \emph{trapezoidal} if it is of the form
$(n,n-2,\dots,n-2k+2)$. See Figure~\ref{fig:trape} for an example. The main goal
of this paper is to prove the following theorem conjectured by Reiner, Tenner
and Yong.

\begin{thm}\cite[Conjecture~2.24]{RTY2018}
  \label{thm:trapezoidal}
  Let $\lambda$ be a trapezoidal shifted Young diagram. 
Then the interval
 $[\emptyset,\lambda]$ has the CDE property and
  \[
\mathbb{E}(X)=\mathbb{E}(Y) = \frac{|\lambda|}{\lambda_1+1}.
  \]
\end{thm}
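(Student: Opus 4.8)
The plan is to prove the CDE property of $[\emptyset,\lambda]$ for the trapezoid $\lambda=(n,n-2,\dots,n-2k+2)$ by computing $\EE(X)$ and $\EE(Y)$ separately and verifying that both equal $|\lambda|/(\lambda_1+1)$; here $\lambda_1=n$ and $|\lambda|=\sum_{i=1}^{k}(n-2i+2)=k(n-k+1)$, so the target value is $k(n-k+1)/(n+1)$. The value of $\EE(Y)$ is where the enumeration of standard barely set-valued tableaux developed in this paper is used, and it is the main part; $\EE(X)$ is the easier of the two and reduces to enumerating the interval.

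For $\EE(X)$: recall $\EE(X)$ equals the number of covering pairs of $[\emptyset,\lambda]$ divided by the number of its elements. The elements are $\emptyset$ together with the strict partitions $\mu=(\mu_1>\dots>\mu_j>0)$ with $j\le k$ and $\mu_i\le n-2i+2$ for all $i$, since shifted containment is componentwise. I would write the vertex count $\#[\emptyset,\lambda]$ and the edge count $\sum_{\mu\subseteq\lambda}\ddeg(\mu)=\#\{(\mu,\mu'):\mu\lessdot\mu'\subseteq\lambda\}$ as sums over decreasing sequences bounded by a staircase, evaluate them in closed form, and check that the ratio is $k(n-k+1)/(n+1)$.

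For $\EE(Y)$: I would use the identity $\EE(Y)=\#\BSYT(\lambda)\big/\bigl((|\lambda|+1)\,g^\lambda\bigr)$, valid for any lower interval of the shifted Young's lattice, where $g^\lambda$ is the number of standard shifted Young tableaux of shape $\lambda$. This holds because $\sum_{\mu\subseteq\lambda}\ddeg(\mu)\,g^\mu g^{\lambda/\mu}=\#\BSYT(\lambda)$ — via the uncrowding-type bijection encoding a standard barely set-valued tableau by a sub-diagram $\mu\subseteq\lambda$, a removable cell of $\mu$, and a pair of standard tableaux of shapes $\mu$ and $\lambda/\mu$ — while $\sum_{\mu\subseteq\lambda}g^\mu g^{\lambda/\mu}=(|\lambda|+1)\,g^\lambda$ because every maximal chain of $[\emptyset,\lambda]$ has $|\lambda|+1$ elements. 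It then remains to (i) substitute the trapezoidal shape into the $q$-integral formula for $\#\BSYT$ of an arbitrary shifted shape proved in this paper, (ii) evaluate that $q$-integral in closed form, (iii) compute $g^\lambda$ from the shifted hook length formula, and (iv) simplify the quotient to $k(n-k+1)/(n+1)$.

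The crux is step (ii). The $q$-integral attached to $\lambda=(n,n-2,\dots,n-2k+2)$ is not one of the standard evaluable forms; the plan is to first apply the change of variables that converts the bounds $\mu_i\le n-2i+2$ into a staircase of unit steps, and then to iterate one-dimensional $q$-beta integrations so that the value emerges as an explicit product over the rows of $\lambda$. I expect this product to nearly cancel against the shifted hook length product for $g^\lambda$, leaving exactly $|\lambda|$ in the numerator and $\lambda_1+1$ in the denominator; making that cancellation precise, and treating the two regimes $\lambda_k=1$ (i.e.\ $n=2k-1$, the case already covered by Theorem~\ref{thm:shifted-balanced}) and $\lambda_k\ge2$ (i.e.\ $n\ge2k$) in a uniform way, is the real work. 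A secondary, more routine obstacle is carrying out the binomial evaluations on the $\EE(X)$ side.
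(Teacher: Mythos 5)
Your overall architecture --- compute $\EE(X)$ and $\EE(Y)$ separately, reduce $\EE(Y)$ to $g^\lambda(+1)/\bigl((|\lambda|+1)g^\lambda\bigr)$ via the chain/uncrowding argument, and feed the trapezoid into the general formula for $g^\lambda(+1)$ --- is exactly the paper's, and your justification of the identity $\EE(Y)=g^\lambda(+1)/\bigl((|\lambda|+1)g^\lambda\bigr)$ is sound. The genuine gap is your step (ii). What the general result (Theorem~\ref{thm:main}, via Proposition~\ref{prop:g+1}) produces for the trapezoid is not a $q$-integral awaiting evaluation but a finite sum of hook-product ratios, one for each northeast corner; after normalizing by $(|\lambda|+1)g^\lambda$ the identity to be proved is \eqref{eqn:mainidentity}, whose right-hand side is a terminating very-well-poised ${}_7F_6$ at argument $1$ (see \eqref{eqn:7F6}). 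Such a sum does not ``emerge as an explicit product over the rows'' from iterated one-dimensional $q$-beta integrations: the $q$-integrals of Section~\ref{sec:proof} enter only once, inside Lemma~\ref{lem:SSYT_sum2}, to trade one weighted corner-sum for another via the Pieri rule, and they are gone from the final formula. Iterated beta integration does evaluate a single $g^\mu$ (that is essentially Thrall's product formula), but the weighted sum over corners has no product form until it is collapsed by hypergeometric machinery; the paper needs Watson's transformation \eqref{eq:watson} combined with the quadratic ${}_4\phi_3$ summation \eqref{eq:43}, yielding the new ${}_8\phi_7$ summation \eqref{eq:new}. Your proposal correctly locates the crux but supplies neither this identity nor any workable substitute, and the mechanism you predict for resolving it is not the right one.

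A smaller but real underestimate is on the $\EE(X)$ side. The vertex count $R(\lambda)=\binom{N+1}{n}$ is indeed a clean reflection argument, but the edge count is not routine: the paper needs the bijective decomposition $R^{(+1)}(\lambda)=\sum_{x\in\BB(\lambda)}R(\lambda(x))$ of Proposition~\ref{prop:R+1}, a Catalan-number count of the sub-diagrams $\alpha$ in Figure~\ref{fig:decomp}, and finally Bailey's ${}_4F_3$ summation to close the binomial identity \eqref{eq:8}. (Alternatively, Remark~\ref{rem:antichain} derives $\EE(X)$ from Stembridge's trapezoid--rectangle antichain correspondence, which would be a legitimately more elementary route for your plan.) Your observation that the $\lambda_k=1$ trapezoid is balanced and already covered by Theorem~\ref{thm:shifted-balanced} is correct and handles that boundary case consistently with the claimed value.
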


Interestingly, as before, it seems that the shifted Young diagrams described in
Theorems~\ref{thm:shifted-balanced} and \ref{thm:trapezoidal} are the only ones
having the CDE property. Hopkins \cite[Remark 4.9]{Hopkins2017} suggested the
following conjecture and confirmed it for all shifted Young diagrams of size at
most $18$. Using our method of computing $\mathbb{E}(X)$ and $\mathbb{E}(Y)$, we
have checked the conjecture for all shifted Young diagrams of size up to $50$.

\begin{conj}
  Let $\lambda$ be a shifted Young diagram. Then the interval
  $[\emptyset,\lambda]$ has the CDE property if and only if $\lambda$ is
  balanced or trapezoidal.
\end{conj}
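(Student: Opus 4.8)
The plan is to treat the two implications separately and to reduce each to an explicit evaluation of the two expectations. Write $n=|\lambda|$, let $g^\mu$ denote the number of standard shifted Young tableaux of shape $\mu$, and $g^{\lambda/\mu}$ the number of the skew ones. Since a maximal chain of $[\emptyset,\lambda]$ is a standard shifted tableau of shape $\lambda$ and meets each rank exactly once, the number of maximal chains through $\mu$ is $g^\mu g^{\lambda/\mu}$, and summing over $\mu$ gives $\sum_{\mu\le\lambda}g^\mu g^{\lambda/\mu}=(n+1)g^\lambda$. As $\ddeg(\mu)$ counts the removable corners of $\mu$, this yields
\begin{equation*}
\mathbb{E}(X)=\frac{\sum_{\mu\le\lambda}\ddeg(\mu)}{\#\{\mu:\mu\le\lambda\}},\qquad
\mathbb{E}(Y)=\frac{\sum_{\mu\le\lambda}\ddeg(\mu)\,g^\mu g^{\lambda/\mu}}{(n+1)g^\lambda}.
\end{equation*}
By the correspondence underlying the link between the CDE property and our enumeration (a variant of the uncrowding map), the numerator of $\mathbb{E}(Y)$ equals $\BSYT(\lambda)$, the number of standard barely set-valued tableaux of shape $\lambda$.

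The ``if'' direction is then immediate: if $\lambda$ is balanced, Theorem~\ref{thm:shifted-balanced} gives $\mathbb{E}(X)=\mathbb{E}(Y)=(\lambda_1+1)/4$, and if $\lambda$ is trapezoidal, Theorem~\ref{thm:trapezoidal} gives $\mathbb{E}(X)=\mathbb{E}(Y)=|\lambda|/(\lambda_1+1)$. The real content therefore lies in the ``only if'' direction, for which the plan is as follows. Using the main theorem of this paper, which evaluates $\BSYT(\lambda)$ by a $q$-integral, together with the shifted hook-length formula for $g^\lambda$, one obtains a closed expression for $\mathbb{E}(Y)$ as a rational function of the parts $\lambda_1>\cdots>\lambda_n$. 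For $\mathbb{E}(X)$ one needs instead the number of shifted subdiagrams $\mu\le\lambda$ and the total down-degree $\sum_{\mu}\ddeg(\mu)$; both are lattice-path counts under the staircase boundary of $\lambda$, and by the same $q\to 1$ specialization of the techniques in \cite{KimStanton17} they can be written as determinants or as coefficients of an explicit generating function. Substituting both evaluations reduces the claim to showing that the single function $D(\lambda):=\mathbb{E}(X)-\mathbb{E}(Y)$ of the parts vanishes exactly on the balanced and trapezoidal families.

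To establish this classification I would study how $D$ responds to elementary moves on the boundary of $\lambda$ (adding or deleting a cell, or sliding an inner corner), seeking a monotonicity statement that forces $D\neq 0$ away from the two families. The structural difficulty is already visible here: the zero set of $D$ is a union of two combinatorially unrelated families, namely the balanced shapes, characterized by their inner corners lying on the main diagonal, and the trapezoidal shapes $(n,n-2,\dots,n-2k+2)$, which admit no such diagonal description. Consequently no single diagonal-based deformation argument can detect both, and one is led either to a case analysis separating the two regimes or to the search for a more intrinsic invariant whose vanishing captures precisely their union.

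The hard part, and the reason the statement is recorded only as a conjecture, is exactly this final classification. In contrast to $\mathbb{E}(Y)$, which is governed by the tractable $q$-integral formula, the difference $D(\lambda)$ is a ratio of two independently complicated enumerations with no single monotone parameter controlling it, so a purely local sign analysis near either family does not rule out ``accidental'' solutions $D(\lambda)=0$ for large, irregular shapes lying far from both. Gaining global control of the solution set of $D(\lambda)=0$ is therefore the genuine obstacle; absent such control, the equivalence can at present only be confirmed computationally, which we have carried out for all shifted shapes of size at most $50$.
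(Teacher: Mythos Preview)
Your assessment is correct and matches the paper's own treatment: the statement is recorded in the paper as a conjecture, not a theorem, and no proof is given. The ``if'' direction is indeed a consequence of Theorems~\ref{thm:shifted-balanced} and~\ref{thm:trapezoidal}, exactly as you say, while the ``only if'' direction remains open; the paper offers no argument beyond the computational verification for all shifted shapes of size at most $50$, which you also cite.
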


There are stronger properties on posets, called mCDE and tCDE. The mCDE property
concerns multichains and the tCDE property concerns toggles on posets introduced
by Striker and Williams \cite{StrikerWilliams2012}. If the poset is graded, then
the mCDE property implies the CDE property. If the poset is distributive, then
the tCDE property implies both the mCDE and CDE properties. See
\cite{Hopkins2017} for more details. Hopkins \cite{Hopkins2017} showed
Theorem~\ref{thm:shifted-balanced} by showing that the poset in the theorem has
the tCDE property. However, his proof cannot apply to
Theorem~\ref{thm:trapezoidal} since the poset in this theorem does not have the
tCDE property, see \cite[Section~4.2]{Hopkins2017}.

Recall that our main goal is to prove Theorem~\ref{thm:trapezoidal}. Our
approach is to compute $\EE(X)$ and $\EE(Y)$ separately and show that they are equal. If $\lambda$ is a
(shifted) Young diagram, computing $\EE(Y)$ for the lower interval
$[\emptyset,\lambda]$ is closely related to standard barely set-valued tableaux
of shape $\lambda$, which are the main object of focus in this paper.

Let $\lambda$ be a partition. A \emph{standard Young tableau} of shape $\lambda$
is a filling of $\lambda$ with integers $1,2,\dots,|\lambda|$ in such a way that
the integers are increasing in each row and each column. See
Figure~\ref{fig:SBY} for an example. The number of standard
Young tableaux of shape $\lambda$ is denoted by $f^\lambda$. The famous
hook-length formula \cite{Frame1954} states that if $\lambda$ is a partition of
length $n$, then
\begin{equation}
  \label{eq:HLF1}
f^\lambda=  \frac{|\lambda|!}{\prod_{u\in \lambda}h_\lambda(u)}
=\dfrac{|\lambda|!}{\prod_{i=1}^n(\lambda_i+n-i) !} \prod_{1\le i<j\le n}
(\lambda_i -\lambda_j-i+j),
\end{equation}
where for each cell $u=(i,j)\in\lambda$, the \emph{hook length} $h_\lambda(u)$
is defined to be $\lambda_i+\lambda'_j-i-j+1$ and $\lambda'_j$ is the number
of integers $k$ such that $\lambda_k\ge j$.

\begin{figure}
 \begin{tikzpicture}[scale=.45]
\draw (0,0)--(1,0)--(1,1)--(3,1)--(3,2)--(4,2)--(4,3)--(0,3)--(0,0)--cycle;
\draw (0,1)--(1,1)
		(0,2)--(3,2)
		(1,1)--(1,3)
		(2,1)--(2,3)
		(3,2)--(3,3);
\node[] at (.5, 2.5) {$1$};
\node[] at (1.5, 2.5) {$2$};
\node[] at (2.5, 2.5) {$5$};
\node[] at (3.5, 2.5) {$8$};
\node[] at (.5, 1.5) {$3$};
\node[] at (1.5, 1.5) {$6$};
\node[] at (2.5, 1.5) {$7$};
\node[] at (.5, .5) {$4$};
\end{tikzpicture}\qquad\qquad
 \begin{tikzpicture}[scale=.45]
\draw (0,0)--(1,0)--(1,1)--(3,1)--(3,2)--(4,2)--(4,3)--(0,3)--(0,0)--cycle;
\draw (0,1)--(1,1)
		(0,2)--(3,2)
		(1,1)--(1,3)
		(2,1)--(2,3)
		(3,2)--(3,3);
\node[] at (.5, 2.5) {$1$};
\node[] at (1.5, 2.5) {$2$};
\node[] at (2.5, 2.5) {$3$};
\node[] at (3.5, 2.5) {$6$};
\node[] at (.5, 1.5) {$4$};
\node[] at (1.3, 1.65) {$5$};
\node[] at (1.75, 1.35) {$7$};
\node[] at (2.5, 1.5) {$9$};
\node[] at (.5, .5) {$8$};
\end{tikzpicture}
  \caption{A standard Young tableau of shape $(4,3,1)$ (left) and a standard
    barely set-valued tableau of shape $(4,3,1)$ (right).}
  \label{fig:SBY}
\end{figure}

A \emph{standard barely set-valued tableau} of shape $\lambda$ is a filling of
the Young diagram $\lambda$ with integers $1,2,\dots,|\lambda|+1$ such that the
integers are increasing in each row and column, and every cell contains one
integer except one cell that contains two integers. See
Figure~\ref{fig:SBY} for an example. Let $f^\lambda(+1)$ denote
the number of standard barely set-valued tableaux of shape $\lambda$. The number
$f^\lambda(+1)$ is also known to be equal to a certain coefficient of the
Grothendieck polynomial due to Buch \cite[Theorem~3.1]{Buch2002}.

Reiner, Tenner and Yong \cite[Corollary~3.7]{RTY2018} showed that if the poset
is the lower interval $[\emptyset,\lambda]$ for a partition $\lambda$, then
\begin{equation}\label{eq:eyf}
\mathbb{E}(Y) = \frac{f^\lambda(+1)}{(|\lambda|+1)f^\lambda}.
\end{equation}
Since we already have a formula for $f^\lambda$, in order to evaluate
$\mathbb{E}(Y)$ it suffices to find $f^\lambda(+1)$. Using what they call an
uncrowding algorithm, Reiner, Tenner and
Yong \cite[Corollary~3.11 and Remark~3.13]{RTY2018} showed that
\begin{equation}
  \label{eq:RTY f+1}
f^\lambda(+1)
= \sum_{k:\lambda_k<\lambda_{k-1}} \lambda_k f^{\lambda\cup\{(k,\lambda_k+1)\}},
\end{equation}
where we set $\lambda_0=\infty$. They used \eqref{eq:RTY f+1} to evaluate
$\EE(Y)$ for the interval $[\emptyset,\lambda]$ when $\lambda$ is the
rectangular staircase shape $\delta_d(b^a)$, which is the Young diagram obtained
from that of $\delta_d$ by replacing each cell by the $a\times b$ rectangle
$(b^a)$. Using \eqref{eq:HLF1}, one can restate \eqref{eq:RTY f+1} as follows.

\begin{thm}\label{thm:f+1}
  Let $\lambda$ be a partition with $n$ parts. Then
  \begin{align*}
    f^\lambda(+1) =\sum_{k:\lambda_k<\lambda_{k-1}}&
    \frac{\lambda_k(|\lambda|+1)!}{(\lambda_k+n-k+1)\prod_{i=1}^n(\lambda_i+n-i)!}\\
&\times \prod_{1\le i<j\le n}(\lambda_i-\lambda_j-i+j)
\prod_{i\ne k} \frac{\lambda_k-\lambda_i+k-i-1}{\lambda_k-\lambda_i+k-i},
  \end{align*}
where $\lambda_0=\infty$. Equivalently, the expectation $\EE(Y)$ for the
interval $[\emptyset,\lambda]$ is given by
\begin{equation}
  \label{eq:f+1}
\EE(Y)= \frac{f^\lambda(+1)}{(|\lambda|+1)f^\lambda} =  \sum_{k:\lambda_k<\lambda_{k-1}}
\frac{\lambda_k}{\lambda_k+n-k+1}
\prod_{i\ne k} \frac{\lambda_k-\lambda_i+k-i-1}{\lambda_k-\lambda_i+k-i}.
\end{equation}
\end{thm}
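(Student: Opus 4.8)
The plan is to obtain both displayed identities directly from formula \eqref{eq:RTY f+1} of Reiner, Tenner and Yong by substituting the hook-length formula \eqref{eq:HLF1}; the whole argument is a bookkeeping computation, so the work lies in organizing the cancellations rather than in any new idea.

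First I would trim the sum in \eqref{eq:RTY f+1}. The index $k=n+1$ is allowed, since $\lambda_{n+1}=0<\lambda_n$, but its contribution $\lambda_{n+1}f^{\lambda\cup\{(n+1,1)\}}$ vanishes, so the sum effectively runs over $k\in\{1,\dots,n\}$ with $\lambda_k<\lambda_{k-1}$ (recall $\lambda_0=\infty$, so $k=1$ always occurs). For such a $k$, the partition $\mu:=\lambda\cup\{(k,\lambda_k+1)\}$ is obtained from $\lambda$ by changing $\lambda_k$ to $\lambda_k+1$, and since $\lambda$ has exactly $n$ parts, so does $\mu$. Hence \eqref{eq:HLF1} applies to $f^\mu$ with the same $n$.

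Next I would substitute \eqref{eq:HLF1} for $f^\mu$ and compare it factor by factor with \eqref{eq:HLF1} for $f^\lambda$. Three things change. The numerator becomes $(|\lambda|+1)!$. In $\prod_{i=1}^n(\mu_i+n-i)!$ only the $k$th factor changes, from $(\lambda_k+n-k)!$ to $(\lambda_k+n-k+1)!$, contributing an extra $\lambda_k+n-k+1$ in the denominator. In $\prod_{1\le i<j\le n}(\mu_i-\mu_j-i+j)$ only the pairs involving the index $k$ change: for $i<k$ the factor $\lambda_i-\lambda_k-i+k$ is replaced by $\lambda_i-\lambda_k-i+k-1$, and for $j>k$ the factor $\lambda_k-\lambda_j-k+j$ is replaced by $\lambda_k-\lambda_j-k+j+1$. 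Since $a\mapsto\lambda_a-a$ is strictly decreasing, none of the affected denominators vanishes, so this change is a genuine rational factor; after pulling signs out of the $i<k$ factors it merges with the $j>k$ factors into the single product over $i\ne k$ in the statement. Multiplying by $\lambda_k$ and summing over $k$ yields the asserted formula for $f^\lambda(+1)$.

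Finally, for the ``equivalently'' clause I would invoke \eqref{eq:eyf}, which gives $\EE(Y)=f^\lambda(+1)/((|\lambda|+1)f^\lambda)$, and divide the formula just obtained by $(|\lambda|+1)f^\lambda$, using \eqref{eq:HLF1} once more for $f^\lambda$ in the denominator. Then $(|\lambda|+1)!$ cancels against $(|\lambda|+1)\,|\lambda|!$, the product $\prod_{i=1}^n(\lambda_i+n-i)!$ cancels entirely, and so does $\prod_{1\le i<j\le n}(\lambda_i-\lambda_j-i+j)$, leaving precisely \eqref{eq:f+1}. The one step that needs care, and where a sign slip is easiest to make, is the merging of the $i<k$ and $j>k$ pieces of the Vandermonde-type product into one product indexed by $i\ne k$; everything else is routine cancellation.
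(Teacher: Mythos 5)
Your overall route is exactly the paper's: the authors give no more proof of this theorem than the sentence ``Using \eqref{eq:HLF1}, one can restate \eqref{eq:RTY f+1} as follows,'' and your factor-by-factor comparison of $f^{\lambda\cup\{(k,\lambda_k+1)\}}$ with $f^\lambda$, the discarding of $k=n+1$, and the reduction of the ``equivalently'' clause via \eqref{eq:eyf} are all correct. The problem is the one step you explicitly defer --- merging the $i<k$ and $j>k$ ratios into ``the single product over $i\ne k$ in the statement.'' If you actually carry out that merge, writing $c_a=\lambda_a-a$, you find that the $i<k$ pairs contribute $\frac{(c_i-c_k)-1}{c_i-c_k}$ and the $j>k$ pairs contribute $\frac{(c_k-c_j)+1}{c_k-c_j}=\frac{(c_j-c_k)-1}{c_j-c_k}$, so the unified product is
\[
\prod_{i\ne k}\frac{c_i-c_k-1}{c_i-c_k}
=\prod_{i\ne k}\frac{\lambda_i-\lambda_k+k-i-1}{\lambda_i-\lambda_k+k-i},
\]
with denominators $c_i-c_k\ne 0$ exactly as you argue. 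This is \emph{not} the product printed in the theorem, which has $\lambda_k-\lambda_i$ where the computation produces $\lambda_i-\lambda_k$; the two agree only when $\lambda_i=\lambda_k$. The printed version is in fact ill-defined in general: for $\lambda=(2,1)$, $k=1$, $i=2$ its denominator is $\lambda_1-\lambda_2+1-2=0$, whereas the corrected product gives $f^\lambda(+1)=8$ and $\EE(Y)=1$, matching a direct count via \eqref{eq:RTY f+1}.

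So the gap is concrete: your derivation is right, but its endpoint is the corrected product above, not the displayed one, and asserting that they coincide is false. (Your own computation is the evidence that the theorem as printed contains a subscript transposition $\lambda_k\leftrightarrow\lambda_i$ in that product.) To make the write-up sound, perform the merge explicitly, state the product in the form $\prod_{i\ne k}\frac{\lambda_i-\lambda_k+k-i-1}{\lambda_i-\lambda_k+k-i}$, and note that your nonvanishing argument applies to these denominators $\lambda_i-\lambda_k-i+k$ --- it does not apply to the denominators as printed.
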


\begin{figure}
\begin{tikzpicture}[scale=.45]
\draw (0,3)--(0,2)--(1,2)--(1,1)--(2,1)--(2,0)--(3,0)--(3,1)--(4,1)--(4,3)--(0,3)--cycle;
\draw (1,2)--(4,2)
	(2,1)--(3,1)
	(1,2)--(1,3)
	(2,1)--(2,3)
	(3,1)--(3,3);
\node[] at (.5, 2.5) {$1$};
\node[] at (1.5, 2.5) {$2$};
\node[] at (2.5, 2.5) {$4$};
\node[] at (3.5, 2.5) {$5$};
\node[] at (1.5, 1.5) {$3$};
\node[] at (2.5, 1.5) {$6$};
\node[] at (3.5, 1.5) {$8$};
\node[] at (2.5, .5) {$7$};
\end{tikzpicture}\qquad\qquad
\begin{tikzpicture}[scale=.45]
\draw (0,3)--(0,2)--(1,2)--(1,1)--(2,1)--(2,0)--(3,0)--(3,1)--(4,1)--(4,3)--(0,3)--cycle;
\draw (1,2)--(4,2)
	(2,1)--(3,1)
	(1,2)--(1,3)
	(2,1)--(2,3)
	(3,1)--(3,3);
\node[] at (.5, 2.5) {$1$};
\node[] at (1.5, 2.5) {$2$};
\node[] at (2.5, 2.5) {$4$};
\node[] at (3.5, 2.5) {$6$};
\node[] at (1.5, 1.5) {$3$};
\node[] at (2.3, 1.65) {$5$};
\node[] at (2.75, 1.35) {$7$};
\node[] at (3.5, 1.5) {$9$};
\node[] at (2.5, .5) {$8$};
\end{tikzpicture}
  \caption{A standard Young tableau of shifted shape $(4,3,1)$ (left) and a
    standard barely set-valued tableau of shifted shape $(4,3,1)$ (right).}
  \label{fig:shifted_SYT}
\end{figure}

Now let $\lambda$ be a strict partition. A \emph{standard Young tableau} and a
\emph{standard barely set-valued tableau} of shifted shape $\lambda$ are defined
similarly as fillings of the shifted Young diagram $\lambda$, see
Figure~\ref{fig:shifted_SYT}. We denote by $g^\lambda$ (resp.~$g^\lambda(+1)$)
the number of standard Young tableaux (resp.~standard barely set-valued
tableaux) of shifted shape $\lambda$.

For a strict partition $\lambda$ with $n$ parts, there is a shifted hook-length
formula due to Thrall \cite{Thrall1952}:
\begin{equation}
\label{eqn:Thrall}
g^\lambda =  \frac{|\lambda|!}{\prod_{u\in \lambda}h_{\lambda}(u)}
=\dfrac{|\lambda|!}{\prod_{i=1}^n\lambda_i !} \prod_{1\le i<j\le n} \dfrac{\lambda_i -\lambda_j}{\lambda_i + \lambda_j}.
\end{equation}
Since we do not need the shifted hook length $h_\lambda(u)$ in this paper,
we refer the reader  to \cite[Fig.~7]{KimYoo_skew} for its definition.

If $\lambda$ is a shifted Young diagram, the expectation $\EE(Y)$ for the
interval $[\emptyset,\lambda]$ can be similarly
computed\footnote{The proof is completely analogous to that
of \eqref{eq:eyf} for ordinary partitions by
Reiner, Tenner and Yong \cite[Corollary~3.7]{RTY2018}, so we omit the details.}:
\begin{equation}
\mathbb{E}(Y) = \frac{g^\lambda(+1)}{(|\lambda|+1)g^\lambda}.
\end{equation}

In this paper, using a modification of the uncrowding algorithm and some
$q$-integral techniques from \cite{KimStanton17}, we express $g^\lambda(+1)$ as a
sum of $g^\mu$'s in a similar fashion as \eqref{eq:RTY f+1}. This leads us to the
following explicit formula for $g^\lambda(+1)$, which is a shifted analogue of
Theorem~\ref{thm:f+1}.

\begin{thm}\label{thm:main}
  Let $\lambda$ be a shifted Young diagram with $n$ rows. Then
  \begin{align*}
g^\lambda(+1)
={}&\frac{(|\lambda|+1)!}{2\prod_{i=1}^n\lambda_i!}
\prod_{1\le i<j\le n} \dfrac{\lambda_i -\lambda_j}{\lambda_i + \lambda_j}\\
&\times \!
\left(  n +
\sum_{k:\lambda_k\le \lambda_{k-1}-2}
\frac{\lambda_k-2n+2k-1}{\lambda_k+1}
\prod_{i\ne k} \frac{\lambda_k+\lambda_i}{\lambda_k-\lambda_i}
\frac{\lambda_k-\lambda_i+1}{\lambda_k+\lambda_i+1}
 \right),
  \end{align*}
where $\lambda_0=\infty$. Equivalently, the expectation $\EE(Y)$ for the
interval $[\emptyset,\lambda]$ is given by
\begin{equation}
  \label{eq:g+1}
\EE(Y)= \frac{g^\lambda(+1)}{(|\lambda|+1)g^\lambda} 
= \frac{1}{2}\!
\left(  n +
\sum_{k:\lambda_k\le \lambda_{k-1}-2}
\frac{\lambda_k-2n+2k-1}{\lambda_k+1}
\prod_{i\ne k} \frac{\lambda_k+\lambda_i}{\lambda_k-\lambda_i}
\frac{\lambda_k-\lambda_i+1}{\lambda_k+\lambda_i+1}
 \right).
\end{equation}
\end{thm}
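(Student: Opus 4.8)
The plan is to prove the formula for $g^\lambda(+1)$ in two stages; the equivalent statement about $\EE(Y)$ then follows at once, since by Thrall's formula \eqref{eqn:Thrall} one has $\dfrac{(|\lambda|+1)!}{\prod_{i=1}^n\lambda_i!}\prod_{1\le i<j\le n}\dfrac{\lambda_i-\lambda_j}{\lambda_i+\lambda_j}=(|\lambda|+1)g^\lambda$. In the first stage I establish a shifted analogue of the Reiner--Tenner--Yong identity \eqref{eq:RTY f+1}: an expression of $g^\lambda(+1)$ as an explicit nonnegative integer combination of the numbers $g^\mu$, where $\mu$ runs over the shifted Young diagrams obtained from $\lambda$ by adjoining a single box. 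In the second stage I substitute \eqref{eqn:Thrall} for each $g^\mu$, turning that identity into a finite sum of products of linear factors in $\lambda_1,\dots,\lambda_n$, and evaluate this sum in closed form using the $q$-integral (equivalently, residue / partial-fraction) techniques of \cite{KimStanton17}.

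For the first stage I would adapt the uncrowding algorithm of \cite{RTY2018} to shifted shapes. Given a standard barely set-valued tableau $T$ of shifted shape $\lambda$ with doubled cell $(i,j)$ containing entries $a<b$, delete $a$ and reinsert it through the tableau by a shifted Schensted--type bumping: $a$ bumps downward through successive rows, and upon reaching the main diagonal the bumping switches to columns and continues down the diagonal, until exactly one new box is created. This produces a standard Young tableau of a shifted shape $\mu=\lambda\cup\{(k,\lambda_k+1)\}$ (or $\mu=\lambda\cup\{(n+1,n+1)\}$) with $|\mu|=|\lambda|+1$, and the reverse procedure shows that the fiber over a given standard Young tableau of shape $\mu$ is finite of an explicitly computable size. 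The off-diagonal part of this bookkeeping runs exactly as in \cite{RTY2018}; the genuinely new point, and the one I expect to be the main obstacle, is the behavior of the bumping on and next to the main diagonal, where an entry can be pushed ``around the corner.'' Handling this carefully yields the desired combinatorial identity $g^\lambda(+1)=\sum_\mu \beta_\mu\, g^\mu$ with explicit $\beta_\mu\in\ZZ_{\geq 0}$.

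In the second stage, substituting \eqref{eqn:Thrall} gives, for $\mu=\lambda\cup\{(k,\lambda_k+1)\}$,
\[
\frac{g^\mu}{g^\lambda}=\frac{|\lambda|+1}{\lambda_k+1}\prod_{i\ne k}\frac{\lambda_k+\lambda_i}{\lambda_k-\lambda_i}\cdot\frac{\lambda_k-\lambda_i+1}{\lambda_k+\lambda_i+1},
\]
(with a similar, simpler expression when $\mu$ has a new row), so that the identity of the first stage becomes the statement of Theorem~\ref{thm:main} once one verifies a single rational-function identity: a weighted sum over $k$ of the right-hand products, plus the contribution of the new-row term, equals the bracketed expression in \eqref{eq:g+1}. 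I would prove this identity with the method behind the $q$-integral evaluations of \cite{KimStanton17}: promote one of the $\lambda_k$'s (or an auxiliary variable) to a continuous parameter, recognize the sum over $k$ as the sum of residues of a rational function whose denominator is a product of the linear forms $\lambda_k\pm\lambda_i$ and $\lambda_k\pm\lambda_i+1$, and match the total against the product obtained by evaluating the corresponding $q$-beta-type integral and letting $q\to1$. Equivalently, one can verify directly that the two sides of the identity are rational functions of that parameter with the same poles, the same residues, and the same limit at infinity, and then specialize back to the original integer values.

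To summarize the difficulty: once the combinatorial reduction of the first stage is available, the second stage is a (lengthy but essentially routine) hypergeometric summation of exactly the kind for which \cite{KimStanton17} supplies the right machinery. The heart of the argument is therefore the first stage --- making the shifted uncrowding precise along the main diagonal and computing the fiber sizes there correctly. The ``column-to-row'' switch of shifted Schensted insertion on the diagonal has no analogue in the straight-shape analysis of \cite{RTY2018}, and it is precisely this feature that accounts for the somewhat unexpected shape of \eqref{eq:g+1} --- in particular the numerator $\lambda_k-2n+2k-1$ and the leading summand $n$.
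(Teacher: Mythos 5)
Your overall architecture (a combinatorial reduction of $g^\lambda(+1)$ to SYT counts, followed by substitution of Thrall's formula) matches the paper's, but Stage~1 as you state it cannot work, and the gap it leaves is precisely where the paper's real effort lies. You propose an uncrowding bijection yielding $g^\lambda(+1)=\sum_\mu\beta_\mu g^\mu$ with $\beta_\mu\in\ZZ_{\ge0}$ and $\mu$ ranging over shifted diagrams $\lambda\cup\{\text{box}\}$. No such identity holds: the correct decomposition (Proposition~\ref{prop:g+1}) is
\[
g^\lambda(+1)=\frac{n(|\lambda|+1)}{2}\,g^\lambda
+\frac12\sum_{(i,j)\in\NE(\lambda)}(2j-2n-1-\lambda_i)\,g^{\lambda\cup\{(i,j)\}},
\]
which contains a $g^\lambda$ term and coefficients that are half-integers and can be negative. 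For instance, for $\lambda=(2,1)$ one gets $g^{(2,1)}(+1)=4\,g^{(2,1)}-\tfrac12\,g^{(3,1)}=4-1=3$, which is correct but manifestly not a nonnegative integer combination of the $g^{\lambda\cup\{\text{box}\}}$. The structural reason is the one you yourself flag as ``the main obstacle'' but then wave past: when the uncrowding reaches the main diagonal, the displaced entry lands in a cell \emph{below} the diagonal, so the bijective part of the argument (Lemmas~\ref{lem:+k2}--\ref{lem:i,i+1}) outputs counts $g^{\lambda\cup\{(i+1,i)\}}$ of SYT of \emph{extended} shifted shapes, not of shifted Young diagrams. Converting $\sum_{i}(n-i)\,g^{\lambda\cup\{(i+1,i)\}}$ into $g^\lambda$ and the $g^{\lambda\cup\{(i,j)\}}$ is not a fiber-counting exercise; in the paper it is Lemma~\ref{lem:SSYT_sum2}, proved by representing these quantities as $q$-integrals of alternants (Propositions~\ref{prop:SSYT_int} and \ref{prop:SSYT_int2}) and applying the Pieri rule to $e_1(x)s_\mu(x)$. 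Your proposal has no mechanism for this step, and it is exactly this step that produces the $g^\lambda$ summand (hence the leading ``$n$'') and the numerator $\lambda_k-2n+2k-1$.

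Your Stage~2 is also mislocated. Once the correct decomposition above is in hand, Theorem~\ref{thm:main} follows by literally substituting \eqref{eqn:Thrall} into each $g^{\lambda\cup\{(k,k+\lambda_k)\}}/g^\lambda$; there is no remaining rational-function or hypergeometric identity to verify at this stage. The residue/partial-fraction and hypergeometric summations you describe are needed only later, in Section~\ref{sec:enum-stand-barely}, when the general sum \eqref{eq:g+1} is evaluated in closed form for the particular balanced and trapezoidal families. So the work you budget for Stage~2 is not needed, while the work you omit from Stage~1 (the $q$-integral identity for the extended shapes) is indispensable.
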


The advantage of Theorem~\ref{thm:main} is that it expresses, for any
shifted Young diagram $\lambda$, the expectation $\EE(Y)$ for the interval
$[\emptyset,\lambda]$ as an explicit sum.
In some fortunate cases, we are able to simplify the sum using the
theory of hypergeometric series and express $\EE(Y)$
as a product. We apply exactly this procedure to
give new proofs of Theorems~\ref{thm:RTY1} and
\ref{thm:shifted-balanced} and a first proof of Theorem~\ref{thm:trapezoidal}.

The remainder of this paper is organized as follows. In
Section~\ref{sec:preliminaries} we give basic definitions and results on
semistandard Young tableaux and $q$-integrals. In Section~\ref{sec:proof} we
prove Theorem~\ref{thm:main}. In Section~\ref{sec:enum-stand-barely} we use
Theorem~\ref{thm:main} to evaluate $\EE(Y)$ for the intervals described in
Theorems~\ref{thm:RTY1}, \ref{thm:shifted-balanced}, and \ref{thm:trapezoidal}.
In Section~\ref{sec:sum-down-degrees} we compute $\EE(X)$ for the interval
described in Theorem~\ref{thm:trapezoidal}, hence complete the proof of
Theorem~\ref{thm:trapezoidal}. Finally, in Appendix~\ref{sec:an-ma-q} we propose a
$q$-analogue, with an extra parameter $\ma$, for the down-degree expectation
$\EE(X)$ for which we conjecture a product formula for lower intervals
$[\emptyset,\la]$ of Young's lattice, in case $\la$ is an ordinary partition
 of balanced shape of any slope.

%----------------------------------------------------------------------------------------------------------

\section{Preliminaries}
\label{sec:preliminaries}

%----------------------------------------------------------------------------------------------------------

In this section we review some basic definitions and results relating
$q$-integrals and semistandard Young tableaux.
We use the standard notation in $q$-series:
\[
  (a;q)_n = (1-a)(1-aq)\cdots (1-aq^{n-1}).
\]

Let $\mu$ be a partition and $\lambda$ a strict partition. One can naturally
identify the Young diagram of $\mu$ with the set $\{(i,j)\in\ZZ\times\ZZ: 1\le
i\le l(\mu), \,1\le j\le \mu_i\}$ and the shifted Young diagram of $\lambda$
with the set $\{(i,j)\in\ZZ\times\ZZ: 1\le i\le l(\lambda),\, i\le j\le
\lambda_i+i-1\}$. In this section and the next we consider another family of
diagrams defined as follows. An \emph{extended shifted Young diagram} is a
diagram obtained from a shifted Young diagram $\lambda$ by adding a cell below
the main diagonal, i.e., a cell in row $i$ and column $i-1$ for some $i\ge1$. In
this case we denote the extended shifted Young diagram by
$\lambda\cup\{(i,i-1)\}$. See Figure~\ref{fig:extended} for an example.

\begin{figure}
\begin{tikzpicture}[scale=.5]
\draw[fill=gray!40] (1,1) rectangle (2,2);
\draw[thick] (1,1)--(2,1)--(2,2)--(1,2)--(1,1)--cycle;
\draw (0,4)--(0,3)--(1,3)--(1,2)--(2,2)--(2,1)--(3,1)--(3,0)--(6,0)--(6,1)--(7,1)--(7,3)--(8,3)--(8,4)--(0,4)--cycle;
\draw (1,3)--(7,3)
	(2,2)--(7,2)
	(3,1)--(6,1)
	(1,3)--(1,4)
	(2,2)--(2,4)
	(3,1)--(3,4)
	(4,0)--(4,4)
	(5,0)--(5,4)
	(6,1)--(6,4)
	(7,3)--(7,4);
\end{tikzpicture}\qquad\qquad
\begin{tikzpicture}[scale=.5]
\draw (1,1)--(2,1)--(2,2)--(1,2)--(1,1)--cycle;
\draw (0,4)--(0,3)--(1,3)--(1,2)--(2,2)--(2,1)--(3,1)--(3,0)--(6,0)--(6,1)--(7,1)--(7,3)--(8,3)--(8,4)--(0,4)--cycle;
\draw (1,3)--(7,3)
	(2,2)--(7,2)
	(3,1)--(6,1)
	(1,3)--(1,4)
	(2,2)--(2,4)
	(3,1)--(3,4)
	(4,0)--(4,4)
	(5,0)--(5,4)
	(6,1)--(6,4)
	(7,3)--(7,4);
\node[] at (.5, 3.5) {$0$};
\node[] at (1.5, 3.5) {$0$};
\node[] at (2.5, 3.5) {$0$};
\node[] at (3.5, 3.5) {$0$};
\node[] at (4.5, 3.5) {$1$};
\node[] at (5.5, 3.5) {$1$};
\node[] at (6.5, 3.5) {$1$};
\node[] at (7.5, 3.5) {$2$};
\node[] at (1.5, 2.5) {$1$};
\node[] at (2.5, 2.5) {$1$};
\node[] at (3.5, 2.5) {$2$};
\node[] at (4.5, 2.5) {$2$};
\node[] at (5.5, 2.5) {$2$};
\node[] at (6.5, 2.5) {$3$};
\node[] at (1.5, 1.5) {$2$};
\node[] at (2.5, 1.5) {$3$};
\node[] at (3.5, 1.5) {$3$};
\node[] at (4.5, 1.5) {$3$};
\node[] at (5.5, 1.5) {$4$};
\node[] at (6.5, 1.5) {$4$};
\node[] at (3.5, .5) {$4$};
\node[] at (4.5, .5) {$5$};
\node[] at (5.5, .5) {$5$};
\end{tikzpicture}
  \caption{An extended shifted Young diagram $\lambda\cup\{(3,2)\}$ for
    $\lambda=(8,6,5,3)$ on the left and a semistandard Young tableau of this
    shape on the right.}
  \label{fig:extended}
\end{figure}
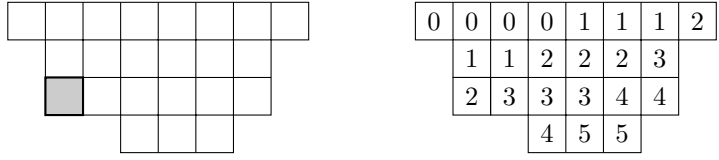

From now on, a diagram means a Young diagram, a shifted Young diagram, or an
extended shifted Young diagram.

Let $\pi$ be a diagram. A \emph{standard Young tableau} of shape $\pi$ is a
filling of the cells in $\pi$ with integers $1,2,\dots,|\pi|$ such that the
integers are increasing in each row and in each column and each integer $1\le
i\le |\pi|$ is used exactly once. A \emph{semistandard Young tableau} of shape
$\pi$ is a filling of $\pi$ with nonnegative integers such that the integers are
weakly increasing in each row and strictly increasing in each column. See
Figure~\ref{fig:extended} for an example. The set of standard
(resp.~semistandard) Young tableaux of shape $\pi$ is denoted by $\SYT(\pi)$
(resp.~$\SSYT(\pi)$). For $T\in\SSYT(\pi)$, let $|T|$ be the sum of integers in
$T$.

The following proposition can be obtained from a well known result in the $P$-partition
theory \cite[Theorem~3.15.7]{EC1}. Note that if $\pi$ is a Young diagram, then
$|\SYT(\pi)| = f^\pi$, and if $\pi$ is a shifted Young diagram, then
$|\SYT(\pi)| = g^\pi$.

\begin{prop}\label{prop:SYT_SSYT}
  For any diagram $\pi$,
\[
  |\SYT(\pi)|=\lim_{q\to1} (q;q)_{|\pi|}\sum_{T\in \SSYT(\pi)}q^{|T|}.
\]
\end{prop}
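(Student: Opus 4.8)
The plan is to identify $\SSYT(\pi)$ with the set of $(P,\omega)$-partitions of an appropriately labeled version of the cell poset of $\pi$, and then to quote the fundamental theorem of $P$-partitions, \cite[Theorem~3.15.7]{EC1}.

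First I would set $n=|\pi|$ and let $P=P_\pi$ be the poset on the cells of $\pi$ with $(i,j)\le_P(i',j')$ if and only if $i\le i'$ and $j\le j'$. Its cover relations are the horizontal pairs $(i,j)\lessdot(i,j+1)$ and the vertical pairs $(i,j)\lessdot(i+1,j)$. Two things are then immediate. (i) A standard Young tableau of shape $\pi$ is precisely an order-preserving bijection $P\to\{1,\dots,n\}$, so $|\SYT(\pi)|$ equals the number $e(P)$ of linear extensions of $P$. (ii) In each of the three admissible types --- Young, shifted Young, extended shifted Young --- the cells of $\pi$ in any fixed column occupy a contiguous interval of rows: for Young diagrams this is obvious, for shifted diagrams it follows from the strict decrease of the parts, and it persists when the single subdiagonal cell is adjoined since by definition that cell sits directly below the diagonal cell of its column. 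Using (ii), I would fix a bijective labeling $\omega\colon P\to\{1,\dots,n\}$ obtained by numbering the cells column by column (say from right to left) and within each column from top to bottom; then $\omega$ increases down every column and decreases along every row. With this labeling the $(P,\omega)$-partitions --- maps $\sigma\colon P\to\NN$ that are order preserving and strictly so across the $\omega$-descents --- are exactly the fillings of $\pi$ that weakly increase along rows and strictly increase down columns, i.e.\ the elements of $\SSYT(\pi)$, and under this identification $|\sigma|=|T|$. (Depending on whether one uses the order-reversing convention of \cite[\S3.15]{EC1}, one applies the $P$-partition machinery to $P$ or to its dual with $\omega$ adjusted accordingly; this changes neither the generating function below nor the final answer.)

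Now \cite[Theorem~3.15.7]{EC1} gives
\[
\sum_{T\in\SSYT(\pi)}q^{|T|}=\sum_{\sigma}q^{|\sigma|}=\frac{W(q)}{(q;q)_n},
\qquad W(q)=\sum_{w\in\mathcal{L}(P,\omega)}q^{\maj(w)},
\]
where $\mathcal{L}(P,\omega)$ is the set of linear extensions of $P$ read through $\omega$. In particular $W$ is a polynomial, and since the labeling only affects the exponent $\maj$ and not the number of terms, $W(1)=|\mathcal{L}(P,\omega)|=e(P)$. Multiplying by $(q;q)_n$ and letting $q\to1$ then yields
\[
\lim_{q\to1}(q;q)_{|\pi|}\sum_{T\in\SSYT(\pi)}q^{|T|}=W(1)=e(P)=|\SYT(\pi)|,
\]
as claimed. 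The main obstacle --- really the only non-formal step --- is the bookkeeping in the middle: choosing $\omega$ so that the row-weak / column-strict conditions of a semistandard tableau match up exactly with the monotonicity-and-descent conditions defining a $(P,\omega)$-partition, while keeping the orientation conventions of \cite{EC1} straight. Everything else is a direct application of Stanley's theorem together with the elementary identification of linear extensions of the cell poset with standard tableaux.
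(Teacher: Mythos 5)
Your argument is correct and is exactly the route the paper takes: the paper simply cites \cite[Theorem~3.15.7]{EC1} without spelling out the details, and your proposal supplies precisely that bookkeeping (cell poset, column-by-column labeling, $W(1)=e(P)$).
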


For $\lambda\in\Par_n$ and a sequence $x=(x_1,\dots,x_n)$ of variables, let
\[
{a}_\lambda(x) = \det(x_{j}^{\lambda_i})_{i,j=1}^n,\qquad
\overline{a}_\lambda(x) = (-1)^{\binom n2}{a}_\lambda(x).
\]
For an $n$-variable function $f(x)=f(x_1,\dots,x_n)$, we write $f(q^\lambda)$ to
mean $f(q^{\lambda_1},\dots,q^{\lambda_n})$.

The following result will be used, see \cite[Theorem 8.7]{KimStanton17}
or \cite[Corollary~2.3]{KimYoo_skew}.
\begin{prop}\label{prop:alternant}
\label{cor:gfs}
Let $\lambda$ be a shifted Young diagram with $n$ parts and let $\nu$ be a
partition with at most $n$ parts. Then
\[
\sum_{\substack{T\in \SSYT(\lambda) \\ \rdiag (T)=\nu}} q^{|T|}
= \frac{q^{|\nu|}}{\prod_{j=1}^n (q;q)_{\lambda_j-1}} \overline{a}_{\lambda-(1^n)}(q^{\nu}),
\]
where $\rdiag(T)$ is the partition obtained by reading the main diagonal entries
in $T$ in reversed order.
\end{prop}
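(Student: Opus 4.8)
The plan is to compute the left-hand side as a signed enumeration of non-intersecting lattice paths, via the Lindstr\"om--Gessel--Viennot (LGV) lemma, and to identify the resulting determinant with the alternant on the right. First I dispose of the degenerate choices of $\nu$. In any $T\in\SSYT(\lambda)$ the diagonal entries strictly increase, $T(1,1)<T(2,2)<\dots<T(n,n)$, since $T(i,i)\le T(i,i+1)<T(i+1,i+1)$ whenever $i<n$. Hence, padding $\nu$ with zeros to length $n$, there is no $T$ with $\rdiag(T)=\nu$ unless $\nu_1>\nu_2>\dots>\nu_n\ge 0$, in which case $\rdiag(T)=\nu$ is the same as $T(i,i)=\nu_{n+1-i}$ for all $i$. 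On the other hand, the $j$-th column of the matrix defining $\overline{a}_{\lambda-(1^n)}(q^{\nu})$ depends only on $\nu_j$, so that determinant vanishes whenever two of the $\nu_j$ coincide. Thus both sides are $0$ in the degenerate cases, and from now on we assume $\nu_1>\dots>\nu_n\ge 0$.

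Next, I model $T$ by $n$ lattice paths, one per row, the path of row $i$ being drawn over the columns $i,\dots,\lambda_i+i-1$ it occupies and recording the (weakly increasing) entries of that row through the heights of its steps. Prescribing $\rdiag(T)=\nu$ fixes the source of the path of row $i$ (at column $i$, height $\nu_{n+1-i}$), while the sinks lie in the fixed columns $\lambda_i+i-1$ with free heights. The column-strict conditions $T(i,j)<T(i+1,j)$ have to be converted into non-intersection of a suitable path family; because consecutive rows of a shifted diagram are offset by one column, this requires a shear (equivalently, adding a fixed staircase correction to the entries) chosen so that strictness becomes ordinary vertex-disjointness and, crucially, so that the source--sink matching coming from an honest tableau is the unique one admitting a non-intersecting completion. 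Once this is arranged, a shifted semistandard tableau of shape $\lambda$ with the prescribed diagonal corresponds to exactly one non-intersecting $n$-tuple of paths with these fixed endpoints.

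The LGV lemma then expresses $\det(W_{ij})_{i,j=1}^n$ as a signed sum over non-intersecting tuples; by the rigidity of the matching arranged above only one permutation occurs, so $\sum_{\rdiag(T)=\nu}q^{|T|}$ equals $(-1)^{\binom n2}\det(W_{ij})$, where $W_{ij}$ is the one-row generating function $\sum q^{a_1+\dots+a_{\lambda_i}}$ over weakly increasing nonnegative integer sequences $a_1\le\dots\le a_{\lambda_i}$ with $a_1=\nu_j$. Writing $a_k=\nu_j+b_k$ with $0=b_1\le b_2\le\dots\le b_{\lambda_i}$ and summing the resulting geometric series gives $W_{ij}=q^{\lambda_i\nu_j}/(q;q)_{\lambda_i-1}$. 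Pulling $1/(q;q)_{\lambda_i-1}$ out of the $i$-th row and $q^{\nu_j}$ out of the $j$-th column of $\det(W_{ij})$, and using $\prod_j q^{\nu_j}=q^{|\nu|}$, identifies $\det(W_{ij})$ with $q^{|\nu|}\bigl(\prod_{j=1}^n(q;q)_{\lambda_j-1}\bigr)^{-1}\det\bigl(q^{\nu_j(\lambda_i-1)}\bigr)_{i,j=1}^n$; since $\overline{a}_{\lambda-(1^n)}(q^{\nu})=(-1)^{\binom n2}\det\bigl(q^{\nu_j(\lambda_i-1)}\bigr)_{i,j=1}^n$, the two factors $(-1)^{\binom n2}$ cancel and the claimed formula follows.

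The main obstacle is precisely this path-model step: selecting conventions and the staircase shear so that (i) the shifted column-strict inequalities become genuine non-intersection, (ii) the sink columns $\lambda_i+i-1$, which need not be distinct, together with the fixed sources pin down a single source--sink matching, and (iii) the global sign $(-1)^{\binom n2}$ is tracked consistently through the shear and through the passage from $a$ to $\overline{a}$. If one prefers to avoid the lattice paths altogether, an alternative route is to recognize the left-hand side as a flagged principal specialization of a skew Schur $P$-function within the $P$-partition formalism, and to extract the determinant from a $q$-analogue of the Weyl character formula; in that approach the combinatorial subtlety of the shear is traded for an algebraic manipulation of alternants.
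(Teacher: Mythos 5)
The paper does not actually prove this proposition: it is quoted from Kim--Stanton [Theorem~8.7] and Kim--Yoo [Corollary~2.3], so there is no in-paper argument to compare against. Judged on its own terms, your proposal gets the bookkeeping right at both ends: the reduction to strictly decreasing $\nu$ (the diagonal of any shifted SSYT is strictly increasing, and the alternant has two equal columns otherwise, so both sides vanish); the one-row generating function $W_{ij}=q^{\lambda_i\nu_j}/(q;q)_{\lambda_i-1}$; the extraction of $q^{|\nu|}$ and of $\prod_j(q;q)_{\lambda_j-1}$ from the determinant; and the identification of the global sign $(-1)^{\binom n2}$ with the reversal matching (row $i$ carries diagonal value $\nu_{n+1-i}$). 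These steps check out against small cases such as $\lambda=(2,1)$.

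However, the middle of the argument --- the only nonroutine part --- is missing, and you say so yourself (``the main obstacle is precisely this path-model step''). Two concrete problems must be solved before Lindstr\"om--Gessel--Viennot can be invoked. First, in the natural embedding the path of row $i$ starts at $(i,\nu_{n+1-i})$ and terminates in column $\lambda_i+i-1$ with a free (unbounded) height; but $\lambda_{i+1}+i=\lambda_i+i-1$ whenever $\lambda_{i+1}=\lambda_i-1$, so consecutive rows can end in the \emph{same} column, their terminal vertical rays coincide, and every configuration --- including those coming from genuine tableaux --- is ``intersecting''. The shear/truncation you allude to is therefore not a choice of convention but the actual content of the proof. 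Second, even after re-embedding, the source of the path attached to row $i$ has its $x$-coordinate tied to $i$ and its height tied to the diagonal value; LGV needs sources indexed by $j$ alone, sinks indexed by $i$ alone, and a verified crossing condition forcing every non-canonical matching to produce an intersection (equivalently, an explicit sign-reversing involution on the bad terms). None of this is established, so the assertion that ``only one permutation occurs'' is unsubstantiated; the alternative route via flagged specializations of Schur $P$-functions is likewise only named, not executed. As it stands, the proposal is a plausible plan with a correct endgame but an unproved core.
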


The \emph{$q$-integral} of $f(x)$ over $[a,b]$ is defined by
\[
\int_{a}^b f(x)d_qx=(1-q)\sum_{i\ge0} (f(bq^i)bq^i-f(aq^i)aq^i),
\]
where $0<q<1$ and the sum is assumed to absolutely converge.
If $q$ approaches $1$, the $q$-integral converges to the usual integral:
\begin{equation}
  \label{eq:qint1}
  \lim_{q\to 1}\int_{a}^b f(x)d_qx= \int_{a}^b f(x)dx.
\end{equation}
In this paper the following multivariate $q$-integral will be considered:
\[
\int_{0\le x_1\le \cdots \le x_n\le 1}f(x_1,\dots, x_n)d_q x
=\int_0^1\int_0^{x_n}\int_0^{x_{n-1}}\dots\int_0^{x_2}f(x_1,\dots, x_n)d_q x_1\cdots d_q x_n.
\]
Note that by \eqref{eq:qint1} we have
\[
  \lim_{q\to 1} \int_{0\le x_1\le \cdots \le x_n\le 1}f(x_1,\dots, x_n)d_q x
  =\int_{0\le x_1\le \cdots \le x_n\le 1}f(x_1,\dots, x_n)d x.
\]

\begin{lem}\cite[Lemma 4.3]{KimStanton17}\label{lem:gf}
For a function $f(x_1,\dots, x_n)$ satisfying $f(x_1,\dots, x_n)=0$ if $ x_i =x_j$ for any $i\ne j$,
$$\sum_{\nu\in \Par_n}q^{|\nu|}f(q^{\nu})=\frac{1}{(1-q)^n}\int_{0\le x_1\le \cdots \le x_n\le 1}f(x_1,\dots, x_n)d_q x.$$
\end{lem}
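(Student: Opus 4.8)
The plan is to unfold both sides into explicit $q$-series and match them term by term, exploiting the vanishing hypothesis on $f$ to handle the boundary terms in the definition of the $q$-integral. First I would expand the innermost iterated $q$-integral $\int_0^{x_2} f(x_1,\dots,x_n)\,d_q x_1$ using the definition of the $q$-integral on $[0,x_2]$, namely $\int_0^{x_2} g(x_1)\,d_qx_1 = (1-q)\sum_{i_1\ge 0} g(x_2 q^{i_1}) x_2 q^{i_1}$, since the contribution at the lower endpoint $0$ vanishes (each term carries a factor $aq^{i}$ with $a=0$). Iterating this, the full $n$-fold $q$-integral over the simplex $0\le x_1\le\cdots\le x_n\le 1$ becomes
\[
\int_{0\le x_1\le\cdots\le x_n\le 1} f\,d_qx
= (1-q)^n \sum_{i_1,\dots,i_n\ge 0} f\bigl(q^{i_n+i_{n-1}+\cdots+i_1},\,\dots,\,q^{i_n+i_{n-1}},\,q^{i_n}\bigr)\, q^{i_n}\, q^{i_n+i_{n-1}}\cdots q^{i_n+\cdots+i_1}.
\]
Here I am being slightly informal about how the upper limits cascade; the key point is that after the substitution $x_n = q^{i_n}$, $x_{n-1} = x_n q^{i_{n-1}} = q^{i_n+i_{n-1}}$, and so on, the arguments of $f$ are exactly $q$ raised to the partial sums $s_j := i_n + i_{n-1} + \cdots + i_j$, which satisfy $s_n \le s_{n-1} \le \cdots \le s_1$.

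The second step is the change of summation index. Setting $\nu_j := s_j = i_n + \cdots + i_j$ for $j=1,\dots,n$, we get $\nu_1 \ge \nu_2 \ge \cdots \ge \nu_n \ge 0$, i.e.\ $\nu = (\nu_1,\dots,\nu_n) \in \Par_n$, and the map $(i_1,\dots,i_n) \mapsto (\nu_1,\dots,\nu_n)$ is a bijection from $\NN^n$ onto $\Par_n$ (inverse: $i_j = \nu_j - \nu_{j+1}$ with $\nu_{n+1}:=0$, which is nonnegative precisely because $\nu$ is a partition). Under this substitution, $f$ evaluated at $(q^{s_1},\dots,q^{s_n}) = (q^{\nu_1},\dots,q^{\nu_n}) = f(q^\nu)$, and the monomial weight $q^{s_1}q^{s_2}\cdots q^{s_n} = q^{\nu_1+\cdots+\nu_n} = q^{|\nu|}$. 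Hence the right-hand side of the claimed identity equals $\frac{1}{(1-q)^n}\cdot(1-q)^n \sum_{\nu\in\Par_n} q^{|\nu|} f(q^\nu) = \sum_{\nu\in\Par_n} q^{|\nu|} f(q^\nu)$, which is the left-hand side.

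The only genuine subtlety — and the step I would be most careful about — is the vanishing hypothesis on $f$. A priori the bijection $(i_1,\dots,i_n)\leftrightarrow\nu$ requires all the $i_j$ with $j<n$ to be $\ge 0$, which forces the \emph{strict} simplex $0\le x_1\le\cdots\le x_n$; but the $q$-integral as defined sums over all lattice points, and one must check that the "degenerate" configurations where some consecutive $s_j = s_{j+1}$ (equivalently $i_j = 0$, equivalently two arguments of $f$ coincide) are correctly accounted for. Because $f(x_1,\dots,x_n)=0$ whenever $x_i = x_j$ for some $i\ne j$, every such degenerate term contributes zero on both sides, so there is no over- or under-counting; in fact the hypothesis is exactly what lets us pass freely between "weak" and "strict" chains of indices. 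I would also note that absolute convergence of all the sums involved is part of the standing assumption on the $q$-integral, so rearranging the multiple sum is legitimate. Everything else is bookkeeping with geometric-type series.
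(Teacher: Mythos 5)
Your proof is correct; the paper itself gives no proof of this lemma (it is quoted from \cite[Lemma~4.3]{KimStanton17}), and your argument---unfolding the iterated $q$-integral from the innermost variable outward and reindexing by the partial sums $\nu_j=i_j+i_{j+1}+\cdots+i_n$---is the natural and standard verification of it. One minor observation: because the single-variable $q$-integral $\int_0^b$ already includes the endpoint $x=b$ (the $i=0$ term of the sum), the degenerate configurations with $\nu_j=\nu_{j+1}$ occur with identical weight on both sides of the identity, so your appeal to the vanishing hypothesis at that step is harmless but not actually required for the identity in the iterated-integral form used here.
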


The following two propositions express the number of standard Young tableaux of
shape $\lambda$ as an integral, when $\lambda$ is a shifted Young diagram and
extended shifted Young diagram, respectively.

\begin{prop}\label{prop:SSYT_int}
  Let $\lambda$ be a strict partition with $n$ parts. Then
\[
g^\lambda
=\frac{|\lambda|!}{\prod_{j=1}^n (\lambda_j-1)!}
\int_{0\le x_1\le \cdots \le x_n\le 1}\overline{a}_{\lambda-(1^n)}(x)d x.
\]
\end{prop}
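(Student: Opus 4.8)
The plan is to combine Proposition~\ref{prop:SYT_SSYT} (relating $|\SYT(\pi)|$ to a limit of a generating function over $\SSYT(\pi)$), Proposition~\ref{prop:alternant} (which evaluates the generating function restricted to a fixed reverse diagonal $\nu$ as an alternant $\overline a_{\lambda-(1^n)}(q^\nu)$), and Lemma~\ref{lem:gf} (which converts a sum $\sum_{\nu\in\Par_n}q^{|\nu|}f(q^\nu)$ into a $q$-integral, provided $f$ vanishes on the diagonals $x_i=x_j$). First I would take $\pi=\lambda$, a strict partition with $n$ parts, and write $g^\lambda=|\SYT(\lambda)|=\lim_{q\to1}(q;q)_{|\lambda|}\sum_{T\in\SSYT(\lambda)}q^{|T|}$. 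Then I would decompose the sum over $T$ according to the value of $\rdiag(T)=\nu$, which ranges over all partitions with at most $n$ parts, and apply Proposition~\ref{prop:alternant} to each piece:
\[
\sum_{T\in\SSYT(\lambda)}q^{|T|}
=\sum_{\nu\in\Par_n}\frac{q^{|\nu|}}{\prod_{j=1}^n(q;q)_{\lambda_j-1}}\overline a_{\lambda-(1^n)}(q^\nu).
\]

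Next I would set $f(x_1,\dots,x_n)=\overline a_{\lambda-(1^n)}(x)=(-1)^{\binom n2}\det(x_j^{\lambda_i-1})$ and apply Lemma~\ref{lem:gf}. The hypothesis of that lemma is satisfied because $f$ is, up to sign, a determinant with two equal columns whenever $x_i=x_j$, hence vanishes there. This gives
\[
\sum_{\nu\in\Par_n}q^{|\nu|}\overline a_{\lambda-(1^n)}(q^\nu)
=\frac{1}{(1-q)^n}\int_{0\le x_1\le\cdots\le x_n\le1}\overline a_{\lambda-(1^n)}(x)\,d_qx.
\]
Combining the two displays,
\[
g^\lambda=\lim_{q\to1}\frac{(q;q)_{|\lambda|}}{(1-q)^n\prod_{j=1}^n(q;q)_{\lambda_j-1}}
\int_{0\le x_1\le\cdots\le x_n\le1}\overline a_{\lambda-(1^n)}(x)\,d_qx.
\]

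It then remains to take the limit $q\to1$. The $q$-integral converges to the ordinary integral by the displayed limit formula in the excerpt, so the integral factor causes no trouble (the integrand is a fixed polynomial, and $\int_{0\le x_1\le\cdots\le x_n\le 1}\overline a_{\lambda-(1^n)}(x)\,dx$ is a finite nonzero constant). For the scalar prefactor, I would write $(q;q)_m=(1-q)^m[m]_q!$ where $[m]_q!=\prod_{i=1}^m\frac{1-q^i}{1-q}$, so that
\[
\frac{(q;q)_{|\lambda|}}{(1-q)^n\prod_{j=1}^n(q;q)_{\lambda_j-1}}
=(1-q)^{|\lambda|-n-\sum_j(\lambda_j-1)}\cdot\frac{[|\lambda|]_q!}{\prod_{j=1}^n[\lambda_j-1]_q!}.
\]
Since $\sum_{j=1}^n(\lambda_j-1)=|\lambda|-n$, the exponent of $(1-q)$ is $0$, and letting $q\to1$ turns $[m]_q!$ into $m!$. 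Thus the prefactor tends to $\dfrac{|\lambda|!}{\prod_{j=1}^n(\lambda_j-1)!}$, yielding exactly the claimed formula.

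The main obstacle, such as it is, is bookkeeping rather than conceptual: one must check that the power of $(1-q)$ in the normalization cancels precisely (this is where the hypothesis that $\lambda$ has exactly $n$ parts, so $\prod$ runs to $n$ and $\sum(\lambda_j-1)=|\lambda|-n$, is used), and one must be slightly careful that all the limits and sums are legitimate — the sum over $\nu$ converges absolutely for $0<q<1$, and Proposition~\ref{prop:SYT_SSYT} already packages the $q\to1$ interchange. Everything else is a direct substitution.
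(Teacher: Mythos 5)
Your proposal is correct and follows essentially the same route as the paper's proof: split the generating function $\sum_{T\in\SSYT(\lambda)}q^{|T|}$ over $\rdiag(T)=\nu$, apply Proposition~\ref{prop:alternant}, convert to a $q$-integral via Lemma~\ref{lem:gf}, and take $q\to1$ using Proposition~\ref{prop:SYT_SSYT}. The only difference is that you spell out the $(1-q)$-power bookkeeping and the vanishing hypothesis of Lemma~\ref{lem:gf}, which the paper leaves implicit.
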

\begin{proof}
  By Proposition~\ref{prop:alternant},
  \[
  \sum_{T\in\SSYT(\lambda)} q^{|T|}
  = \sum_{\nu\in\Par_n}\sum_{\substack{T\in \SSYT(\lambda) \\ \rdiag (T)=\nu}} q^{|T|}
  =\sum_{\nu\in\Par_n} \frac{q^{|\nu|}}{\prod_{j=1}^n (q;q)_{\lambda_j-1}}
    \overline{a}_{\lambda-(1^n)}(q^{\nu}).
    \]
Then by Lemma~\ref{lem:gf},
\[
  \sum_{T\in\SSYT(\lambda)} q^{|T|}
=\frac{(1-q)^{-n}}{\prod_{j=1}^n (q;q)_{\lambda_j-1}}
\int_{0\le x_1\le \cdots \le x_n\le 1}\overline{a}_{\lambda-(1^n)}(x)d_q x.
\]
Multiplying both sides of this equation by $(q;q)_{|\lambda|}$ and taking the
$q\to1$ limit gives
the result by Proposition~\ref{prop:SYT_SSYT}.
\end{proof}

\begin{prop}\label{prop:SSYT_int2}
  Let $\lambda$ be a strict partition with $n$ parts. Then
for $1\le i\le n$,
\[
g^{\lambda\cup\{(n-i+1,n-i)\}}
=\frac{(|\lambda|+1)!}{\prod_{j=1}^n (\lambda_j-1)!}
\int_{0\le x_1\le \cdots \le x_n\le 1}(x_{i+1}-x_i)\overline{a}_{\lambda-(1^n)}(x)dx,
\]
where $x_{n+1}=1$.
\end{prop}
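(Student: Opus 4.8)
The plan is to imitate the proof of Proposition~\ref{prop:SSYT_int}, but now working with the extended shifted Young diagram $\pi=\lambda\cup\{(n-i+1,n-i)\}$ in place of $\lambda$. First I would set up an analogue of Proposition~\ref{prop:alternant} for this extended shape. A semistandard filling $T$ of $\pi$ consists of a semistandard filling of the shifted part $\lambda$ together with one extra cell in row $n-i+1$, column $n-i$, whose entry must be strictly less than the main‑diagonal entry in that row and weakly less than (in fact strictly less than, since it is in a column strictly to the left) nothing below it — so the only constraint is that it is strictly smaller than the diagonal entry of row $n-i+1$ and at least $0$. Writing $\rdiag(T)=\nu=(\nu_1,\dots,\nu_n)$ for the reversed main diagonal of the shifted part, the extra cell sits directly left of the diagonal cell of row $n-i+1$, which carries entry $\nu_{n-(n-i+1)+1}=\nu_i$; so the extra entry ranges over $\{0,1,\dots,\nu_i-1\}$ if we also demand it exceed the entry above it. Actually the cleanest bookkeeping is: the extra cell has some value $t$ with (entry above it in column $n-i$) $< t \le \nu_i - 1$, but since column $n-i$ of the shifted diagram has its lowest cell in row $n-i+1$ already at the boundary, the extra cell has no cell above it within $\lambda$; hence $0\le t\le \nu_i-1$ — wait, it does have the cell $(n-i,n-i)$ above it, whose entry is $\nu_{i+1}$. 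So the range is $\nu_{i+1}\le t\le \nu_i-1$ (and $\nu_{n+1}=0$ handles $i=n$). Summing $q^t$ over this range contributes a factor $(q^{\nu_{i+1}}-q^{\nu_i})/(1-q)$ to the generating function, so
\[
\sum_{T\in\SSYT(\pi)}q^{|T|}
=\frac{1}{1-q}\sum_{\nu\in\Par_n}\frac{q^{|\nu|}(q^{\nu_{i+1}}-q^{\nu_i})}{\prod_{j=1}^n(q;q)_{\lambda_j-1}}\,\overline{a}_{\lambda-(1^n)}(q^\nu),
\]
using Proposition~\ref{prop:alternant} for the shifted part.

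Next I would apply Lemma~\ref{lem:gf} to the inner sum. The function
$f(x_1,\dots,x_n)=(x_{i+1}-x_i)\,\overline{a}_{\lambda-(1^n)}(x)$
vanishes whenever $x_a=x_b$ for $a\ne b$, because the alternant $\overline{a}_{\lambda-(1^n)}(x)$ does (the parts of $\lambda-(1^n)$ are distinct, as $\lambda$ is strict, so $a_{\lambda-(1^n)}$ is an alternant of distinct powers); note the extra factor $(x_{i+1}-x_i)$ only reinforces this. The identification $q^{\nu_{i+1}}-q^{\nu_i}$ with $(x_{i+1}-x_i)$ evaluated at $x=q^\nu$ is exactly what makes $f(q^\nu)$ the summand, after also absorbing the $q^{|\nu|}$ into the $q^{|\nu|}f(q^\nu)$ form. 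With $x_{n+1}=1$ the edge case $i=n$ gives $q^{\nu_{n+1}}=1=q^0$, consistent with the convention $\nu_{n+1}=0$. Lemma~\ref{lem:gf} then turns the sum into $(1-q)^{-n}$ times the multivariate $q$-integral of $f$ over $0\le x_1\le\dots\le x_n\le1$, so
\[
\sum_{T\in\SSYT(\pi)}q^{|T|}
=\frac{(1-q)^{-n-1}}{\prod_{j=1}^n(q;q)_{\lambda_j-1}}
\int_{0\le x_1\le\cdots\le x_n\le1}(x_{i+1}-x_i)\,\overline{a}_{\lambda-(1^n)}(x)\,d_qx.
\]

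Finally I would multiply both sides by $(q;q)_{|\pi|}=(q;q)_{|\lambda|+1}$ and let $q\to1$. On the left, Proposition~\ref{prop:SYT_SSYT} gives $|\SYT(\pi)|=g^{\lambda\cup\{(n-i+1,n-i)\}}$. On the right, $(q;q)_{|\lambda|+1}=(1-q)\cdots(1-q^{|\lambda|+1})\sim(1-q)^{|\lambda|+1}(|\lambda|+1)!/(\text{nothing})$; more precisely $(q;q)_m=(1-q)^m\prod_{k=1}^m[k]_q\to$ needs the standard rescaling $(q;q)_m/(1-q)^m\to m!$. Combining, the $(1-q)$ powers cancel: $(1-q)^{|\lambda|+1}\cdot(1-q)^{-n-1}\cdot\prod_j(q;q)_{\lambda_j-1}^{-1}$, together with $\prod_j(q;q)_{\lambda_j-1}\to(1-q)^{\sum(\lambda_j-1)}\prod_j(\lambda_j-1)!=(1-q)^{|\lambda|-n}\prod_j(\lambda_j-1)!$, yields overall power $(1-q)^{(|\lambda|+1)-(n+1)-(|\lambda|-n)}=(1-q)^0$, and the surviving constant is $(|\lambda|+1)!/\prod_{j=1}^n(\lambda_j-1)!$, while the $q$-integral tends to the ordinary integral by \eqref{eq:qint1}. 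This gives exactly the claimed formula. The main obstacle I anticipate is getting the combinatorics of the extra cell right — in particular correctly identifying that its entry is bounded below by the entry $\nu_{i+1}$ of the diagonal cell $(n-i,n-i)$ sitting directly above it and above by $\nu_i-1$ where $\nu_i$ is the entry of the diagonal cell $(n-i+1,n-i+1)$ to its upper right — and checking the degenerate case $i=n$ against the convention $x_{n+1}=1$; once that is pinned down, the rest is the same $q$-integral bookkeeping as in Proposition~\ref{prop:SSYT_int}.
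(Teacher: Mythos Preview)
Your approach is exactly the paper's: decompose the SSYT generating function of the extended shape by the reversed diagonal $\nu$, sum over the extra cell's value, apply Lemma~\ref{lem:gf}, and take $q\to 1$ via Proposition~\ref{prop:SYT_SSYT}. One slip to fix, precisely at the point you flagged: the strict/weak inequalities are reversed. The extra cell $(n-i+1,n-i)$ lies in the same \emph{row} as the diagonal cell $(n-i+1,n-i+1)$ (not ``upper right''), so the row condition gives $t\le\nu_i$ weakly; it lies in the same \emph{column} as $(n-i,n-i)$, so the column condition gives $t>\nu_{i+1}$ strictly. Hence the correct range is $\nu_{i+1}+1\le t\le \nu_i$ (with $\nu_{n+1}=-1$ in the edge case $i=n$), and the geometric sum is $\dfrac{q(q^{\nu_{i+1}}-q^{\nu_i})}{1-q}$, introducing an extra factor of $q$ and the convention $x_{n+1}=q^{-1}$ in the $q$-integral identity. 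Since $q\to 1$ kills this factor (and sends $q^{-1}\to 1$), your final formula is unaffected; only the intermediate $q$-identity you wrote is off by this factor of $q$.
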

\begin{proof}
  By the same argument as in the proof of Proposition~\ref{prop:SSYT_int},
  it suffices to prove the following identity:
\begin{equation}
  \label{eq:2a}
  \sum_{T\in\SSYT(\lambda\cup\{(n-i+1,n-i)\})} q^{|T|}
=\frac{q(1-q)^{-n-1}}{\prod_{j=1}^n (q;q)_{\lambda_j-1}}
\int_{0\le y_1\le \cdots \le y_n\le 1}(y_{i+1}-y_i)\overline{a}_{\lambda-(1^n)}(y)d_q y,
\end{equation}
where $y_{n+1}=q^{-1}$. First, observe that
\[
  \sum_{T\in\SSYT(\lambda\cup\{(n-i+1,n-i)\})} q^{|T|}
  = \sum_{\nu\in\Par_n}\sum_{\substack{T\in\SSYT(\lambda\cup\{(n-i+1,n-i)\})\\
  \rdiag (T)=\nu}} q^{|T|} .
\]
Let $\nu\in\Par_n$ and $T\in \SSYT(\lambda\cup\{(n-i+1,n-i)\})$ with $\rdiag (T)=\nu$.
If $s$ is the $(n-i+1,n-i)$-entry of $T$, then $s$ can be any integer
satisfying $\nu_{i+1}< s\le \nu_i$, where $\nu_{n+1}=-1$. Therefore,
\begin{align*}
\sum_{\nu\in\Par_n}\sum_{\substack{T\in\SSYT(\lambda\cup\{(n-i+1,n-i)\})\\
  \rdiag (T)=\nu}} q^{|T|}
  &= \sum_{\nu\in\Par_n}\sum_{\substack{T\in \SSYT(\lambda) \\ \rdiag (T)=\nu}} q^{|T|}
(q^{\nu_{i+1}+1}+q^{\nu_{i+1}+2}+\dots+q^{\nu_i}),
\end{align*}
which is, by Proposition~\ref{prop:alternant}, equal to
  \[
    \sum_{\nu\in\Par_n} \frac{q^{|\nu|}}{\prod_{j=1}^n (q;q)_{\lambda_j-1}}
    \overline{a}_{\lambda-(1^n)}(q^{\nu}) \frac{q(q^{\nu_{i+1}}-q^{\nu_{i}})}{1-q}.
  \]
  Applying Lemma~\ref{lem:gf} to the above sum gives the right hand side of
  \eqref{eq:2a} which completes the proof.
\end{proof}

%----------------------------------------------------------------------------------------------------------

\section{A formula for the number of shifted SBTs}
\label{sec:proof}

%----------------------------------------------------------------------------------------------------------

In this section we prove Theorem~\ref{thm:main} in the introduction, which is a
formula for the number of standard barely set-valued tableaux of shifted shape.

Let $\lambda$ be a shifted Young diagram with $n$ rows. For a standard barely
set-valued tableau (in short, SBT) $T$ of shape $\lambda$, there is a unique cell
that contains two integers. We call this cell the \emph{double cell} of $T$. Denote by
$g^\lambda_k(+1)$ the number of standard barely set-valued tableaux of shifted
shape $\lambda$ with the double cell in column $k$. We also define
$g^\lambda_{i,i}(+1)$ to be the number of standard barely set-valued tableaux of
shape $\lambda$ with double cell in column $i$ and in row $i$. Then, by
definition,
\begin{equation}
  g^\lambda(+1)=\sum_{i=1}^{\lambda_1}g^\lambda_i(+1).
\end{equation}

A \emph{northeast corner} of $\lambda\in\Par^*_n$ is a cell $(i,i+\lambda_i)$
such that $1\le i\le n$ and $\lambda_i\le \lambda_{i-1}-2$, where
$\lambda_0=\infty$. In other words, $(i,j)$ is a northeast corner of $\lambda$
if and only if $1\le i\le n$, $(i,j)\not\in \lambda$ and $\lambda\cup\{(i,j)\}$
is a shifted Young diagram. The set of northeast corners of $\lambda$ is denoted
by $\NE(\lambda)$. See Figure~\ref{fig:NE} for an example. Note that in our
definition $(n+1,n+1)\not\in\NE(\lambda)$ even when $\lambda_n\ge2$ and  
$\lambda\cup\{(n+1,n+1)\}$ is a shifted Young diagram.

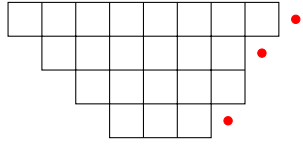
\begin{figure}
\begin{tikzpicture}[scale=.45]
\draw (0,4)--(0,3)--(1,3)--(1,2)--(2,2)--(2,1)--(3,1)--(3,0)--(6,0)--(6,1)--(7,1)--(7,3)--(8,3)--(8,4)--(0,4)--cycle;
\draw (1,3)--(7,3)
	(2,2)--(7,2)
	(3,1)--(6,1)
	(1,3)--(1,4)
	(2,2)--(2,4)
	(3,1)--(3,4)
	(4,0)--(4,4)
	(5,0)--(5,4)
	(6,1)--(6,4)
	(7,3)--(7,4);
\filldraw [red] (8.5,3.5) circle (3.5pt)
                      (7.5, 2.5) circle (3.5pt)
                      (6.5, .5) circle (3.5pt);
\end{tikzpicture}
  \caption{The set $\NE(\lambda)$ of northeast corners of $\lambda=(8,6,5,3)$.}
  \label{fig:NE}
\end{figure}

Since we have the shifted hook length formula for $g^\mu$, the idea of proof of
Theorem~\ref{thm:main} is to express $g^{\lambda}(+1)$ as a sum of $g^\mu$'s
(Proposition~\ref{prop:g+1}). To this end, we need a sequence of lemmas. The
methods used in the proofs of the first two lemmas are similar to the
``uncrowding'' algorithm in \cite{RTY2018}.

\begin{lem}\label{lem:+k2}
For $\lambda\in\Par^*_n$ and $n\le k\le \lambda_1$,
  \[
   g^\lambda_k(+1)= \sum_{(i,j)\in \NE(\lambda),\, j>k}
   g^{\lambda\cup\{(i,j)\}}.
  \]
\end{lem}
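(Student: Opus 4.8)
The plan is to prove the identity bijectively, by adapting to the shifted setting the uncrowding algorithm of Reiner, Tenner and Yong. Precisely, I would construct a bijection
\[
\Phi \colon \{T\in\BSYT(\lambda): \text{the double cell of }T\text{ lies in column }k\}
\;\longrightarrow\;
\bigsqcup_{\substack{(i,j)\in\NE(\lambda)\\ j>k}}\SYT(\lambda\cup\{(i,j)\}),
\]
whose existence immediately yields the lemma, since the left-hand set is counted by $g^\lambda_k(+1)$ and the right-hand set by $\sum_{(i,j)\in\NE(\lambda),\,j>k}g^{\lambda\cup\{(i,j)\}}$. The hypothesis $n\le k$ enters right away: it guarantees that column $k$, and every column to its right, lies strictly to the right of the main diagonal, so that in this region each column is simply a strictly increasing chain $T(1,c)<T(2,c)<\cdots$, the column lengths are weakly decreasing in $c$, and $(i,c)\in\lambda$ forces $(i,c')\in\lambda$ for all $i\le c'\le c$. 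In other words, the shifted geometry is invisible to the right of column $k$.

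Second, the forward map. Let $T$ have double cell $(r,k)$ containing $\{a,b\}$ with $a<b$. Delete $b$, leaving $a$ alone in $(r,k)$, and column-insert $b$ into column $k+1$: $b$ replaces the smallest entry of column $k+1$ exceeding it (an entry sitting in a row $\le r$, using $b<T(r,k+1)$ whenever that cell exists), the displaced entry is column-inserted into column $k+2$, and so on. Since the columns eventually become empty, the process terminates, and it terminates by depositing an entry into a cell not previously in $\lambda$ — either at the top of the first empty column, necessarily $(1,\lambda_1+1)$ because $k\le\lambda_1$, or at the bottom of some nonempty column $c\le\lambda_1$, say at $(p+1,c)$ where $p$ is the length of column $c$. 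Define $\Phi(T)$ to be the resulting filling together with this one new cell. Three things must be checked: (i) $\Phi(T)\in\SYT(\lambda\cup\{(i,j)\})$, which follows from the bumping rules exactly as for ordinary column-insertion; (ii) the new cell $(i,j)$ satisfies $j>k$, which is immediate since insertion starts in column $k+1$; (iii) $(i,j)\in\NE(\lambda)$, i.e.\ $1\le i\le n$ and $\lambda_i\le\lambda_{i-1}-2$ — in the first termination case $i=1$ and there is nothing to prove, and in the second case both $p+1\le n$ and $\lambda_{p+1}\le\lambda_p-2$ follow from the inequality $c\le\lambda_p+p-1$ expressing that row $p$ reaches column $c$, together with the observation that reaching $p+1=n+1$ would force $c=k+1=n+1$, hence $k=n$ and $b<T(r,k+1)\le T(n,k+1)$, so $b$ could never be deposited at the bottom of column $n+1$ in the first place. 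This last point is the one place where $n\le k$ is genuinely used, and it is exactly what forbids the new cell from being $(n+1,n+1)$, the cell deliberately excluded from $\NE(\lambda)$.

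Third, the inverse $\Psi$: given $U\in\SYT(\lambda\cup\{(i,j)\})$ with $(i,j)\in\NE(\lambda)$ and $j>k$, delete the corner cell $(i,j)$ with its entry $v$, reverse-column-insert $v$ successively through columns $j-1,j-2,\dots,k+1$, and adjoin the entry $b$ that emerges from column $k+1$ to the cell of column $k$ holding the largest entry $a<b$, making that cell the double cell $\{a,b\}$. One verifies that this cell is well defined, that the output is a genuine barely set-valued tableau, and that $\Psi$ lands in the domain of $\Phi$, all from the standard properties of reverse column-bumping; and $\Phi\circ\Psi=\mathrm{id}$, $\Psi\circ\Phi=\mathrm{id}$ by the usual step-by-step matching of each forward bump with its reverse.

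I expect the main obstacle to be precisely the well-definedness of $\Phi$: proving that the terminal cell is always a northeast corner lying in a column $>k$ — that the cascade never attempts to create a non-strict shape, never escapes below the diagonal, and never lands at $(n+1,n+1)$. This is where the shifted case diverges from the ordinary one treated in \cite{RTY2018} and where the hypothesis $n\le k$ does its work; once it is settled, bijectivity is a routine, if bookkeeping-heavy, verification that reverse bumping undoes forward bumping.
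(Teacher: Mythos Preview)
Your approach is correct and is in spirit the same as the paper's: both use the Reiner--Tenner--Yong column-uncrowding idea. The difference is only in packaging. The paper does not build the full bijection $\Phi$ in one shot; instead it proves the single-step recursion
\[
g^\lambda_k(+1)=g^\lambda_{k+1}(+1)+g^{\lambda\cup\{(p+1,k+1)\}},
\]
where $p$ is the length of column $k+1$ (with the last term interpreted as $0$ if the cell is not addable), via the obvious bijection ``move the larger of the two entries one column to the right''. The lemma then follows by iterating this recursion down to $k=\lambda_1$. Your map $\Phi$ is precisely the composite of these one-step moves, so the two arguments are equivalent. The payoff of the paper's version is that the well-definedness check you identify as the main obstacle becomes trivial: in a single step the bumped entry either joins an existing cell of column $k+1$ or is placed in the unique addable cell of that column, and one sees at once (since $k\ge n$ and the double cell sits in some row $r\le n$) that this addable cell lies in a row $\le r\le n$, hence is never $(n+1,n+1)$.

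One remark on your write-up: the sentence ``reaching $p+1=n+1$ would force $c=k+1=n+1$'' is not right as stated (nothing forces the terminal column to equal $n+1$). The clean argument is the one implicit in your own set-up: the bumping path moves weakly upward in rows, it starts in a row $\le r$ (because $b<T(r,k+1)$ when that cell exists, and otherwise column $k+1$ already has length $<r$), and the landing row is at most the last bumped row; hence the new cell lies in a row $\le r\le n$ and is therefore a genuine element of $\NE(\lambda)$. Once you phrase it this way the shifted subtlety disappears, and this is exactly what the paper's one-step formulation makes transparent.
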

\begin{proof}
  It is enough to show that
  \begin{equation}
    \label{eq:1}
    g^\lambda_k(+1) = g^\lambda_{k+1}(+1) + g^{\lambda\cup \{(i+1,k+1)\}},
  \end{equation}
  where $i$ is the length of the $k$th column of $\lambda$. We define
  $g^{\lambda}_{k+1}(+1)=0$ if $k+1>\lambda_1$, and $g^{\lambda\cup
    \{(i+1,k+1)\}}=0$ if $(i+1,k+1)\not\in \NE(\lambda)$.

  We prove \eqref{eq:1} by constructing a bijection from $A$ to $B\cup C$, where
  $A$ is the set of SBT of shape $\lambda$ with double cell in column $k$, $B$
  is the set of SBTs of shape $\lambda$ with double cell in column $k+1$, and
  $C$ is the set of SYTs of shape $\lambda\cup\{(i+1,k+1)\}$.

  Suppose $T\in A$. Let $a<b$ be the integers in the double cell in $T$. Let $c$
  be the smallest integer larger than $b$ in the $(k+1)$st column of $T$. Define
  $T'$ to be the SBT obtained from $T$ by moving $b$ to the cell containing $c$.
  If there is no such integer $c$, then define $T'$ to be the SYT of shape
  $\lambda\cup\{(i+1,k+1)\}$ obtained from $T$ by moving $b$ to the new cell
  $(i+1,k+1)$. See Figure~\ref{fig:uncrowding1} for examples of this map.

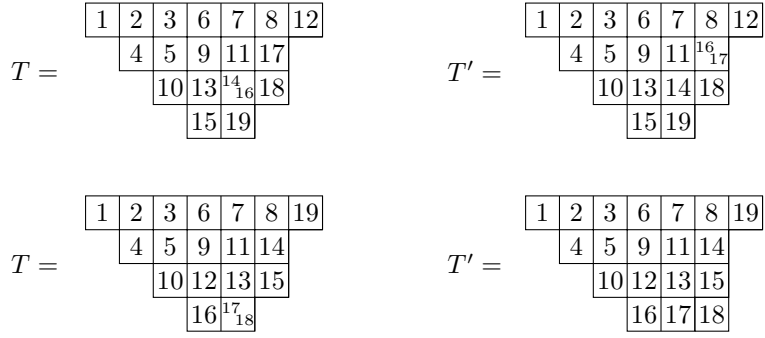
\begin{figure}
$$
\begin{tikzpicture}[scale=.45]
\draw (0,4)--(0,3)--(1,3)--(1,2)--(2,2)--(2,1)--(3,1)--(3,0)--(5,0)--(5,1)--(6,1)--(6,3)--(7,3)--(7,4)--(0,4)--cycle;
\draw (1,3)--(7,3)
	(2,2)--(6,2)
	(3,1)--(6,1)
	(1,3)--(1,4)
	(2,2)--(2,4)
	(3,1)--(3,4)
	(4,0)--(4,4)
	(5,0)--(5,4)
	(6,1)--(6,4)
	(7,3)--(7,4);
\node[] at (.5, 3.5) {$1$};
\node[] at (1.5, 3.5) {$2$};
\node[] at (2.5, 3.5) {$3$};
\node[] at (3.5, 3.5) {$6$};
\node[] at (4.5, 3.5) {$7$};
\node[] at (5.5, 3.5) {$8$};
\node[] at (6.5, 3.5) {$12$};
\node[] at (1.5, 2.5) {$4$};
\node[] at (2.5, 2.5) {$5$};
\node[] at (3.5, 2.5) {$9$};
\node[] at (4.5, 2.5) {$11$};
\node[] at (5.5, 2.5) {$17$};
\node[] at (2.5, 1.5) {$10$};
\node[] at (3.5, 1.5) {$13$};
\node[] at (4.3, 1.7) {\tiny $14$};
\node[] at (4.7, 1.34) {\tiny $16$};
\node[] at (5.5, 1.5) {$18$};
\node[] at (3.5, .5) {$15$};
\node[] at (4.5, .5) {$19$};
\node[] at (-1.5,2) {$T=$};
\begin{scope}[shift={(13,0)}]
\draw (0,4)--(0,3)--(1,3)--(1,2)--(2,2)--(2,1)--(3,1)--(3,0)--(5,0)--(5,1)--(6,1)--(6,3)--(7,3)--(7,4)--(0,4)--cycle;
\draw (1,3)--(7,3)
	(2,2)--(6,2)
	(3,1)--(6,1)
	(1,3)--(1,4)
	(2,2)--(2,4)
	(3,1)--(3,4)
	(4,0)--(4,4)
	(5,0)--(5,4)
	(6,1)--(6,4)
	(7,3)--(7,4);
\node[] at (.5, 3.5) {$1$};
\node[] at (1.5, 3.5) {$2$};
\node[] at (2.5, 3.5) {$3$};
\node[] at (3.5, 3.5) {$6$};
\node[] at (4.5, 3.5) {$7$};
\node[] at (5.5, 3.5) {$8$};
\node[] at (6.5, 3.5) {$12$};
\node[] at (1.5, 2.5) {$4$};
\node[] at (2.5, 2.5) {$5$};
\node[] at (3.5, 2.5) {$9$};
\node[] at (4.5, 2.5) {$11$};
\node[] at (5.3, 2.7) {\tiny $16$};
\node[] at (5.7, 2.34) {\tiny $17$};
\node[] at (2.5, 1.5) {$10$};
\node[] at (3.5, 1.5) {$13$};
\node[] at (4.5, 1.5) {$14$};
\node[] at (5.5, 1.5) {$18$};
\node[] at (3.5, .5) {$15$};
\node[] at (4.5, .5) {$19$};
\node[] at (-1.5,2) {$T'=$};
\end{scope}
\end{tikzpicture}
$$

$$
\begin{tikzpicture}[scale=.45]
\draw (0,4)--(0,3)--(1,3)--(1,2)--(2,2)--(2,1)--(3,1)--(3,0)--(5,0)--(5,1)--(6,1)--(6,3)--(7,3)--(7,4)--(0,4)--cycle;
\draw (1,3)--(7,3)
	(2,2)--(6,2)
	(3,1)--(6,1)
	(1,3)--(1,4)
	(2,2)--(2,4)
	(3,1)--(3,4)
	(4,0)--(4,4)
	(5,0)--(5,4)
	(6,1)--(6,4)
	(7,3)--(7,4);
\node[] at (.5, 3.5) {$1$};
\node[] at (1.5, 3.5) {$2$};
\node[] at (2.5, 3.5) {$3$};
\node[] at (3.5, 3.5) {$6$};
\node[] at (4.5, 3.5) {$7$};
\node[] at (5.5, 3.5) {$8$};
\node[] at (6.5, 3.5) {$19$};
\node[] at (1.5, 2.5) {$4$};
\node[] at (2.5, 2.5) {$5$};
\node[] at (3.5, 2.5) {$9$};
\node[] at (4.5, 2.5) {$11$};
\node[] at (5.5, 2.5) {$14$};
\node[] at (2.5, 1.5) {$10$};
\node[] at (3.5, 1.5) {$12$};
\node[] at (4.5, 1.5) {$13$};
\node[] at (5.5, 1.5) {$15$};
\node[] at (3.5, .5) {$16$};
\node[] at (4.3, .7) {\tiny $17$};
\node[] at (4.7, .34) {\tiny $18$};
\node[] at (-1.5,2) {$T=$};
\begin{scope}[shift={(13,0)}]
\draw (0,4)--(0,3)--(1,3)--(1,2)--(2,2)--(2,1)--(3,1)--(3,0)--(5,0)--(5,1)--(6,1)--(6,3)--(7,3)--(7,4)--(0,4)--cycle;
\draw (1,3)--(7,3)
	(2,2)--(6,2)
	(3,1)--(6,1)
	(1,3)--(1,4)
	(2,2)--(2,4)
	(3,1)--(3,4)
	(4,0)--(4,4)
	(5,0)--(5,4)
	(6,1)--(6,4)
	(7,3)--(7,4)
	(5,0)--(6,0)--(6,1);
\node[] at (.5, 3.5) {$1$};
\node[] at (1.5, 3.5) {$2$};
\node[] at (2.5, 3.5) {$3$};
\node[] at (3.5, 3.5) {$6$};
\node[] at (4.5, 3.5) {$7$};
\node[] at (5.5, 3.5) {$8$};
\node[] at (6.5, 3.5) {$19$};
\node[] at (1.5, 2.5) {$4$};
\node[] at (2.5, 2.5) {$5$};
\node[] at (3.5, 2.5) {$9$};
\node[] at (4.5, 2.5) {$11$};
\node[] at (5.5, 2.5) {$14$};
\node[] at (2.5, 1.5) {$10$};
\node[] at (3.5, 1.5) {$12$};
\node[] at (4.5, 1.5) {$13$};
\node[] at (5.5, 1.5) {$15$};
\node[] at (3.5, .5) {$16$};
\node[] at (4.5, .5) {$17$};
\node[] at (5.5, .5) {$18$};
\node[] at (-1.5,2) {$T'=$};
\end{scope}
\end{tikzpicture}
$$
  \caption{Two examples of the map $T\mapsto T'$ in the proof of
    Lemma~\ref{lem:+k2}.}
  \label{fig:uncrowding1}
\end{figure}

It is easy to see that the map $T\mapsto T'$ is a bijection from
  $A$ to $B\cup C$, and the proof follows. 
\end{proof}

\begin{lem}\label{lem:+k1}
For $\lambda\in\Par^*_n$ and $1\le k\le n$,
  \[
   g^\lambda_k(+1)= \sum_{i=1}^k g^\lambda_{i,i}(+1).
  \]
\end{lem}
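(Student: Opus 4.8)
The plan is to establish, for $2\le k\le n$, the recurrence
\begin{equation*}
g^\lambda_k(+1)=g^\lambda_{k-1}(+1)+g^\lambda_{k,k}(+1),
\end{equation*}
together with the base case $k=1$, which is immediate: for a shifted shape the first column consists only of the diagonal cell $(1,1)$, so any SBT with double cell in column $1$ has that cell at $(1,1)$ and hence $g^\lambda_1(+1)=g^\lambda_{1,1}(+1)$. Granting the recurrence, the lemma follows by induction on $k$: if $g^\lambda_{k-1}(+1)=\sum_{i=1}^{k-1}g^\lambda_{i,i}(+1)$, then $g^\lambda_k(+1)=\sum_{i=1}^{k-1}g^\lambda_{i,i}(+1)+g^\lambda_{k,k}(+1)=\sum_{i=1}^{k}g^\lambda_{i,i}(+1)$.

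To prove the recurrence, note that an SBT with double cell in column $k$ either has that double cell in row $k$ (there are $g^\lambda_{k,k}(+1)$ of these) or in a row $r$ with $r\le k-1$. So it is enough to construct a bijection between the set $B$ of SBTs of shifted shape $\lambda$ whose double cell lies in column $k-1$ and the set $A$ of SBTs of shifted shape $\lambda$ whose double cell lies in column $k$ but not in row $k$. This is a one-step move in the spirit of the uncrowding map in Lemma~\ref{lem:+k2}. Given $T\in B$ with double cell $(r,k-1)$ containing $a<b$, let $c$ be the smallest entry exceeding $b$ in column $k$ of $T$, and form $T'$ by moving $b$ into the cell of column $k$ containing $c$, so that cell becomes a double cell $\{b,c\}$ and $(r,k-1)$ becomes the single cell $\{a\}$. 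The crucial point, which is where strictness of $\lambda$ and the restriction $k\le n$ enter, is that such a $c$ always exists: column $k$ contains the diagonal cell $(k,k)$, and chasing the column/row inequalities $b\le T(k-1,k-1)<T(k-1,k)<T(k,k)$ shows $b<T(k,k)$. Moreover the cell of $c$ lies in a row $\le r\le k-1$, since the entry $T(r,k)$ immediately to the right of the double cell already exceeds $b$; hence $T'\in A$. The inverse reverses this move: given $T'\in A$ with double cell in column $k$ containing $b<c$, let $a$ be the largest entry in column $k-1$ that is smaller than $b$ (such an entry exists since $T'(1,k-1)\le T'(r,k-1)<b$), and move $b$ into the cell containing $a$.

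The remaining work is routine bookkeeping: one checks that $T\mapsto T'$ and its stated inverse always produce valid SBTs --- i.e.\ the row-increasing and column-increasing conditions hold along column $k-1$, column $k$, and the two rows touched by the move --- and that the two maps are mutually inverse. All of this follows formally from $c$ (resp.\ $a$) being chosen as an extremal entry, exactly as in the proof of Lemma~\ref{lem:+k2}. I expect this verification to be the only real obstacle, and the one genuinely essential input beyond it is the strictness of $\lambda$: it guarantees both that the diagonal cell $(k,k)$ belongs to $\lambda$ and that for $1\le k\le n$ columns $k-1$ and $k$ consist exactly of the cells $(1,k-1),\dots,(k-1,k-1)$ and $(1,k),\dots,(k,k)$, which is what makes the "no new cell is created" feature (in contrast with Lemma~\ref{lem:+k2}) work.
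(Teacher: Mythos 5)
Your proof is correct and is essentially the paper's argument: the same recurrence $g^\lambda_k(+1)=g^\lambda_{k-1}(+1)+g^\lambda_{k,k}(+1)$, established by the same one-step uncrowding bijection between SBTs with double cell in column $k-1$ and SBTs with double cell in column $k$ but not at $(k,k)$, merely run in the opposite direction with the inverse spelled out. One substantive remark: your choice of landing cell --- the largest entry of column $k-1$ smaller than the \emph{moved} (smaller) element of the double cell --- is the right one and is actually preferable to the paper's literal phrasing, which takes the largest entry of column $k-1$ smaller than the \emph{larger} element $b$; when column $k-1$ contains an entry strictly between the two elements of the double cell (e.g.\ $\lambda=(4,3,2)$ with double cell $(1,3)=\{3,5\}$ and $T(2,2)=4$), the paper's recipe fails to be injective (that tableau and the one with double cell $(1,3)=\{4,5\}$ and $T(2,2)=3$ get the same image), whereas your rule gives a genuine bijection.
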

\begin{proof}
  The proof is similar to that of Lemma~\ref{lem:+k2}. 
It is enough to show that
  \begin{equation}
    \label{eq:2}
    g^\lambda_k(+1) = g^\lambda_{k,k}(+1)+ g^\lambda_{k-1}(+1),
  \end{equation}
  where $g^\lambda_0(+1)=0$. We prove \eqref{eq:2} by constructing a bijection
  from $A$ to $B$, where $A$ is the set of SBT
  of shape $\lambda$ with double
  cell in column $k$ but not in the diagonal cell $(k,k)$
  and $B$ is the set of SBT of shape $\lambda$ with double
  cell in column $k-1$.

  Suppose $T\in A$. Let $a<b$ be the integers in the double cell in $T$. Let $c$
  be the largest integer smaller than $b$ in the $(k-1)$th column of $T$. Define
  $T'$ to be the SBT obtained from $T$ by moving $a$ to the cell containing $c$.
  See Figure~\ref{fig:uncrowding2} for an example of this map.

\begin{figure}
$$
\begin{tikzpicture}[scale=.45]
\draw (0,4)--(0,3)--(1,3)--(1,2)--(2,2)--(2,1)--(3,1)--(3,0)--(5,0)--(5,1)--(6,1)--(6,3)--(7,3)--(7,4)--(0,4)--cycle;
\draw (1,3)--(7,3)
	(2,2)--(6,2)
	(3,1)--(6,1)
	(1,3)--(1,4)
	(2,2)--(2,4)
	(3,1)--(3,4)
	(4,0)--(4,4)
	(5,0)--(5,4)
	(6,1)--(6,4)
	(7,3)--(7,4);
\node[] at (.5, 3.5) {$1$};
\node[] at (1.5, 3.5) {$2$};
\node[] at (2.5, 3.5) {$3$};
\node[] at (3.5, 3.5) {$5$};
\node[] at (4.5, 3.5) {$8$};
\node[] at (5.5, 3.5) {$10$};
\node[] at (6.5, 3.5) {$11$};
\node[] at (1.5, 2.5) {$4$};
\node[] at (2.5, 2.5) {$6$};
\node[] at (3.3, 2.7) {\tiny $9$};
\node[] at (3.7, 2.34) {\tiny $12$};
\node[] at (4.5, 2.5) {$15$};
\node[] at (5.5, 2.5) {$16$};
\node[] at (2.5, 1.5) {$7$};
\node[] at (3.5, 1.5) {$13$};
\node[] at (4.5, 1.5) {$17$};
\node[] at (5.5, 1.5) {$19$};
\node[] at (3.5, .5) {$14$};
\node[] at (4.5, .5) {$18$};
\node[] at (-1.5,2) {$T=$};
\end{tikzpicture}
\qquad\quad
\begin{tikzpicture}[scale=.45]
\draw (0,4)--(0,3)--(1,3)--(1,2)--(2,2)--(2,1)--(3,1)--(3,0)--(5,0)--(5,1)--(6,1)--(6,3)--(7,3)--(7,4)--(0,4)--cycle;
\draw (1,3)--(7,3)
	(2,2)--(6,2)
	(3,1)--(6,1)
	(1,3)--(1,4)
	(2,2)--(2,4)
	(3,1)--(3,4)
	(4,0)--(4,4)
	(5,0)--(5,4)
	(6,1)--(6,4)
	(7,3)--(7,4);
\node[] at (.5, 3.5) {$1$};
\node[] at (1.5, 3.5) {$2$};
\node[] at (2.5, 3.5) {$3$};
\node[] at (3.5, 3.5) {$5$};
\node[] at (4.5, 3.5) {$8$};
\node[] at (5.5, 3.5) {$10$};
\node[] at (6.5, 3.5) {$11$};
\node[] at (1.5, 2.5) {$4$};
\node[] at (2.5, 2.5) {$6$};
\node[] at (3.5, 2.5) {$12$};
\node[] at (4.5, 2.5) {$15$};
\node[] at (5.5, 2.5) {$16$};
\node[] at (2.3, 1.7) {\small $7$};
\node[] at (2.7, 1.34) {\small $9$};
\node[] at (3.5, 1.5) {$13$};
\node[] at (4.5, 1.5) {$17$};
\node[] at (5.5, 1.5) {$19$};
\node[] at (3.5, .5) {$14$};
\node[] at (4.5, .5) {$18$};
\node[] at (-1.5,2) {$T'=$};
\end{tikzpicture}$$
  \caption{An example of the map $T\mapsto T'$ in the proof of
    Lemma~\ref{lem:+k1}.}
  \label{fig:uncrowding2}
\end{figure}
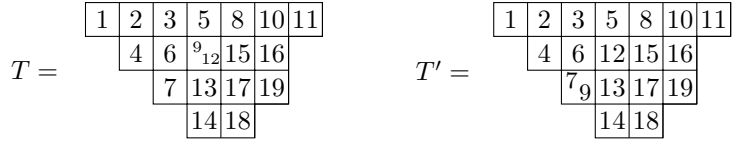

  It is easy to see that the map $T\mapsto T'$ is a bijection from $A$ to $B$,
  and the proof follows.
\end{proof}

\begin{lem}\label{lem:i,i+1}
  For $\lambda\in\Par^*_n$ and $1\le i \le n-1$,
\[
g^{\lambda\cup\{(i+1,i)\}} = g^\lambda_{i,i}(+1) + g^\lambda_{i+1,i+1}(+1) .
\]
\end{lem}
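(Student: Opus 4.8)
The statement to prove is
\[
g^{\lambda\cup\{(i+1,i)\}} = g^\lambda_{i,i}(+1) + g^\lambda_{i+1,i+1}(+1),
\]
for $\lambda\in\Par^*_n$ and $1\le i\le n-1$. This sits in exactly the same circle of ideas as Lemmas~\ref{lem:+k2} and \ref{lem:+k1}: an "uncrowding''-style bijection between standard barely set-valued tableaux with a double cell on the main diagonal and standard Young tableaux of an extended shifted shape. So the plan is to construct an explicit bijection between $\SYT(\lambda\cup\{(i+1,i)\})$ on one side and the disjoint union of two families of SBTs on the other: those of shifted shape $\lambda$ whose double cell is the diagonal cell $(i,i)$, and those whose double cell is the diagonal cell $(i+1,i+1)$.

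First I would describe the forward map. Let $U\in\SYT(\lambda\cup\{(i+1,i)\})$, so $U$ has a single extra cell $(i+1,i)$ sitting just below the diagonal in column $i$. Let $b$ be the entry in that cell. The idea is to "push'' this entry back up into the shifted diagram along a lattice path, creating a double cell on the diagonal. Concretely, compare $b$ with the diagonal entries: if $b$ is smaller than the entry currently in $(i+1,i+1)$ (or that cell is empty because $\ell(\lambda)<i+1$), then $b$ should be absorbed into the diagonal cell $(i,i)$ of row $i$; otherwise it should travel down one more row and be absorbed into $(i+1,i+1)$. More precisely: remove $b$ from cell $(i+1,i)$; if $b$ is less than the $(i+1,i+1)$-entry of $U$, adjoin $b$ to the cell $(i,i)$, producing an SBT of shape $\lambda$ with double cell $(i,i)$; if instead $b$ exceeds that entry, then we need to shift: the entry in $(i+1,i)$ leaving forces a bumping step — but since $(i+1,i)$ is in column $i$ and its only "room to move'' within the shifted diagram is the diagonal, one checks that in fact $b$ together with the old $(i+1,i+1)$-entry form the double cell $(i+1,i+1)$, after possibly re-sorting the two entries. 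I would verify row- and column-strictness is preserved in each case, which amounts to checking the relevant local inequalities among $b$ and the neighboring entries of $U$ (the entry left of $(i+1,i)$, the entry above it at $(i,i)$, and the entry $(i+1,i+1)$).

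The inverse map goes the other way: given an SBT $T$ of shape $\lambda$ with double cell on the diagonal — say at $(j,j)$ with $j\in\{i,i+1\}$ containing entries $a<b$ — remove the larger one, $b$, from the double cell and place it in the new cell $(i+1,i)$; when $j=i$ this is a direct move, and when $j=i+1$ one checks that the resulting filling is again a standard Young tableau of $\lambda\cup\{(i+1,i)\}$ because the column-$i$ inequality between the $(i,i)$-entry and $b$ holds (as $b$ came from row $i+1$, it exceeds everything in row $i$ up to the diagonal). The two cases $j=i$ and $j=i+1$ are distinguished on the $\SYT$ side by whether the new entry in $(i+1,i)$ is smaller or larger than the $(i+1,i+1)$-entry, matching the dichotomy in the forward map, so the two maps are mutually inverse. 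As in the earlier lemmas, I would illustrate the bijection with a small figure.

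The main obstacle I anticipate is the careful case analysis at the diagonal: unlike Lemmas~\ref{lem:+k2} and \ref{lem:+k1}, where the moving entry travels within a single column or between adjacent columns away from the diagonal, here the special geometry of the extra cell $(i+1,i)$ lying strictly below the main diagonal of a \emph{shifted} shape means one must be precise about which entries are legitimately comparable and about whether a "double cell'' ends up in row $i$ or row $i+1$. In particular, the boundary situation where $\ell(\lambda)<i+1$ (so cell $(i+1,i+1)$ does not exist in $\lambda$) needs separate attention, and one must confirm that the construction still produces a valid SBT with double cell $(i,i)$ in that degenerate case and never spuriously lands in the $(i+1,i+1)$ family. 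Once the local inequalities are pinned down, bijectivity is immediate.
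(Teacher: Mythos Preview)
Your overall plan---a bijection between $\SYT(\lambda\cup\{(i+1,i)\})$ and the disjoint union of SBTs with double cell $(i,i)$ and $(i+1,i+1)$---is exactly the paper's approach. However, the specific criterion you propose does not work. You compare the extra entry $b$ (in cell $(i+1,i)$) with the entry in cell $(i+1,i+1)$; but these two cells lie in the \emph{same row}, with $(i+1,i+1)$ immediately to the right of $(i+1,i)$, so by row-strictness $b$ is \emph{always} smaller than the $(i+1,i+1)$-entry. Your dichotomy therefore collapses: every $U$ would be sent to the $(i,i)$ case and none to the $(i+1,i+1)$ case. Moreover, the $(i,i)$ case is not always valid: if $b$ exceeds the entry in $(i,i+1)$, then placing $b$ into cell $(i,i)$ violates row-increase in row $i$.

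The fix is to compare with the correct neighbour: let $a$ be the entry in $(i+1,i)$ and let $c$ be the entry in $(i,i+1)$ (the cell above $(i+1,i+1)$ and to the right of $(i,i)$). If $a<c$, adjoin $a$ to $(i,i)$; if $a>c$, adjoin $a$ to $(i+1,i+1)$. Both cases now produce valid SBTs, the two cases are distinguished on the SBT side by which diagonal cell is doubled, and the inverse simply extracts the larger of the two entries in the double cell and places it back in $(i+1,i)$. This is precisely the paper's construction. Finally, your worry about $\ell(\lambda)<i+1$ is unnecessary: since $1\le i\le n-1$ we have $i+1\le n=\ell(\lambda)$, so cell $(i+1,i+1)$ always exists.
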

\begin{proof}
  It suffices to find a bijection from $A$ to $B\cup C$, where $A$ is the set of
  SYTs of shape $\lambda\cup\{(i+1,i)\}$, $B$ is the set of SBTs of shape
  $\lambda$ with double cell $(i,i)$, and $C$ is the set of SBTs of shape
  $\lambda$ with double cell $(i+1,i+1)$.

  Let $T\in A$. Let $a$ and $b$ be the integers in cell $(i+1,i)$ and $(i,i+1)$
  of $T$, respectively. Define $T'$ to be the SBT obtained from $T$ by removing
  cell $(i+1,i)$ and putting $a$ into cell $(i,i)$ if $a<b$ and into cell
  $(i+1,i+1)$ if $a>b$. See Figure~\ref{fig:uncrowding3} for examples of this
  map.

\begin{figure}
\centering
\begin{tikzpicture}[scale=.45]
\draw (0,3)--(0,2)--(1,2)--(1,1)--(2,1)--(2,0)--(3,0)--(3,1)--(4,1)--(4,3)--(0,3)--cycle;
\draw (1,2)--(4,2)
	(2,1)--(3,1)
	(1,2)--(1,3)
	(2,1)--(2,3)
	(3,1)--(3,3)
	(1,1)--(1,0)--(2,0);
\node[] at (.5, 2.5) {$1$};
\node[] at (1.5, 2.5) {$2$};
\node[] at (2.5, 2.5) {$4$};
\node[] at (3.5, 2.5) {$7$};
\node[] at (1.5, 1.5) {$3$};
\node[] at (2.5, 1.5) {$\bf 6$};
\node[] at (3.5, 1.5) {$9$};
\node[] at (2.5, .5) {$8$};
\node[] at (1.5, .5) {$\bf 5$};
\node[] at (5, 1.5) {$\longleftrightarrow$};
\begin{scope}[shift={(6,0)}]
\draw (0,3)--(0,2)--(1,2)--(1,1)--(2,1)--(2,0)--(3,0)--(3,1)--(4,1)--(4,3)--(0,3)--cycle;
\draw (1,2)--(4,2)
	(2,1)--(3,1)
	(1,2)--(1,3)
	(2,1)--(2,3)
	(3,1)--(3,3);
\node[] at (.5, 2.5) {$1$};
\node[] at (1.5, 2.5) {$2$};
\node[] at (2.5, 2.5) {$4$};
\node[] at (3.5, 2.5) {$7$};
\node[] at (1.3, 1.65) {$3$};
\node[] at (1.75, 1.35) {$\bf 5$};
\node[] at (2.5, 1.5) {$6$};
\node[] at (3.5, 1.5) {$9$};
\node[] at (2.5, .5) {$8$};
\end{scope}
\begin{scope}[shift={(13,0)}]
\draw (0,3)--(0,2)--(1,2)--(1,1)--(2,1)--(2,0)--(3,0)--(3,1)--(4,1)--(4,3)--(0,3)--cycle;
\draw (1,2)--(4,2)
	(2,1)--(3,1)
	(1,2)--(1,3)
	(2,1)--(2,3)
	(3,1)--(3,3)
	(1,1)--(1,0)--(2,0);
\node[] at (.5, 2.5) {$1$};
\node[] at (1.5, 2.5) {$2$};
\node[] at (2.5, 2.5) {$4$};
\node[] at (3.5, 2.5) {$7$};
\node[] at (1.5, 1.5) {$3$};
\node[] at (2.5, 1.5) {$\bf 5$};
\node[] at (3.5, 1.5) {$9$};
\node[] at (1.5, .5) {$\bf 6$};
\node[] at (2.5, .5) {$8$};
\node[] at (5, 1.5) {$\longleftrightarrow$};
\end{scope}
\begin{scope}[shift={(19,0)}]
\draw (0,3)--(0,2)--(1,2)--(1,1)--(2,1)--(2,0)--(3,0)--(3,1)--(4,1)--(4,3)--(0,3)--cycle;
\draw (1,2)--(4,2)
	(2,1)--(3,1)
	(1,2)--(1,3)
	(2,1)--(2,3)
	(3,1)--(3,3);
\node[] at (.5, 2.5) {$1$};
\node[] at (1.5, 2.5) {$2$};
\node[] at (2.5, 2.5) {$4$};
\node[] at (3.5, 2.5) {$7$};
\node[] at (1.5, 1.5) {$3$};
\node[] at (2.5, 1.5) {$5$};
\node[] at (3.5, 1.5) {$9$};
\node[] at (2.75, .35) {$8$};
\node[] at (2.3, .65) {$\bf 6$};
\end{scope}
\end{tikzpicture}
\caption{Two examples of the map $T\mapsto T'$ in the proof of
    Lemma~\ref{lem:i,i+1}.}
  \label{fig:uncrowding3}
\end{figure}
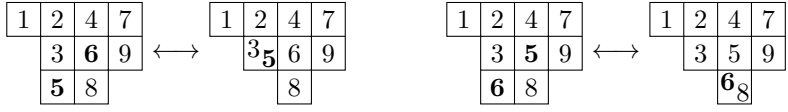

It is easy to see that the map $T\mapsto T'$ is a
  desired bijection.
\end{proof}

\begin{lem}\label{lem:SSYT_sum1}
  For $\lambda\in\Par^*_n$,
  \[
  \sum_{i=1}^n g^\lambda_i(+1)=
  \frac{1}{2} \sum_{(i,j)\in \NE(\lambda)} g^{\lambda\cup\{(i,j)\}}
  +\frac{1}{2} \sum_{i=0}^{n-1} (n-i) g^{\lambda\cup\{(i+1,i)\}}.
  \]
\end{lem}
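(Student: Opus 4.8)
The plan is to reduce everything to the diagonal quantities $d_i:=g^\lambda_{i,i}(+1)$ for $1\le i\le n$, and then to verify a purely numerical coefficient identity; no tableau combinatorics beyond Lemmas~\ref{lem:+k2},~\ref{lem:+k1} and~\ref{lem:i,i+1} is needed. First, Lemma~\ref{lem:+k1} gives $g^\lambda_k(+1)=\sum_{i=1}^k d_i$ for $1\le k\le n$; summing over $k$ and counting how often each $d_i$ occurs rewrites the left-hand side as $\sum_{i=1}^n g^\lambda_i(+1)=\sum_{i=1}^n(n-i+1)\,d_i$.

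Next I would establish the northeast-corner sum. Since $\lambda$ is strict with $n$ parts, any northeast corner $(i,j)=(i,i+\lambda_i)$ satisfies $\lambda_i\ge\lambda_n+(n-i)\ge n-i+1$, so $j\ge n+1$; hence the restriction ``$j>k$'' in Lemma~\ref{lem:+k2} is vacuous at $k=n$, and that lemma gives $g^\lambda_n(+1)=\sum_{(i,j)\in\NE(\lambda)}g^{\lambda\cup\{(i,j)\}}$. Comparing with the case $k=n$ of $g^\lambda_k(+1)=\sum_{i=1}^k d_i$ yields
\[
\sum_{i=1}^n d_i=\sum_{(i,j)\in\NE(\lambda)}g^{\lambda\cup\{(i,j)\}}.
\]

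For the near-diagonal sum, Lemma~\ref{lem:i,i+1} gives $g^{\lambda\cup\{(i+1,i)\}}=d_i+d_{i+1}$ for $1\le i\le n-1$. The term $i=0$ lies outside the range of that lemma; I would dispose of it directly by noting that moving the entry of cell $(1,0)$ of a standard Young tableau of shape $\lambda\cup\{(1,0)\}$ into cell $(1,1)$, so as to form a double cell there, is a bijection onto the standard barely set-valued tableaux of shape $\lambda$ with double cell $(1,1)$; hence $g^{\lambda\cup\{(1,0)\}}=d_1$, which is to say Lemma~\ref{lem:i,i+1} stays valid for $i=0$ under the convention $g^\lambda_{0,0}(+1)=0$. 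Substituting these values, splitting off the $i=0$ term, reindexing via $\sum_{i=1}^{n-1}(n-i)\,d_{i+1}=\sum_{i=2}^{n}(n-i+1)\,d_i$, and collecting coefficients gives
\[
\sum_{i=0}^{n-1}(n-i)\,g^{\lambda\cup\{(i+1,i)\}}=\sum_{i=1}^n\bigl(2(n-i)+1\bigr)d_i.
\]

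Finally, the right-hand side of the lemma is $\tfrac12\sum_{(i,j)\in\NE(\lambda)}g^{\lambda\cup\{(i,j)\}}+\tfrac12\sum_{i=0}^{n-1}(n-i)\,g^{\lambda\cup\{(i+1,i)\}}$, and by the two displayed identities it equals $\tfrac12\sum_{i=1}^n d_i+\tfrac12\sum_{i=1}^n\bigl(2(n-i)+1\bigr)d_i=\sum_{i=1}^n(n-i+1)\,d_i$, which is the rewritten left-hand side; so the lemma follows. Apart from quoting the three lemmas, the only substantive points are the observation that every northeast corner sits in a column $>n$ (which is what makes the case $k=n$ of Lemma~\ref{lem:+k2} already capture all of $\NE(\lambda)$) and the boundary cell $(1,0)$; the rest is an index shift and a coefficient count, and I expect the main --- rather mild --- difficulty to be keeping that bookkeeping free of off-by-one errors.
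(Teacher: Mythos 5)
Your proof is correct and follows essentially the same route as the paper: both reduce everything to the diagonal quantities $g^\lambda_{i,i}(+1)$ via Lemmas~\ref{lem:+k1}, \ref{lem:+k2} and \ref{lem:i,i+1} and then match coefficients, with your version merely organized as ``compute the right-hand side'' rather than the paper's manipulation of $2L-g^\lambda_n(+1)$. A minor bonus of your write-up is that you explicitly justify the boundary identity $g^{\lambda\cup\{(1,0)\}}=g^\lambda_{1,1}(+1)$ and the vacuity of the restriction $j>n$, both of which the paper asserts without comment.
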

\begin{proof}
  Let $L$ be the left hand side of the identity. Then by Lemma~\ref{lem:+k1},
\begin{align*}
  2L - g^\lambda_n(+1) &= \sum_{i=1}^n (2n-2i+1)g^\lambda_{i,i}(+1)\\
  &= \sum_{i=1}^n (n-i)g^\lambda_{i,i}(+1)
    + \sum_{i=0}^{n-1} (n-i)g^\lambda_{i+1,i+1}(+1)\\
  &= ng^\lambda_{1,1}(+1)+
    \sum_{i=1}^{n-1} (n-i)(g^\lambda_{i,i}(+1)+g^\lambda_{i+1,i+1}(+1)) \\
  &= ng^\lambda_{1,1}(+1)+\sum_{i=1}^{n-1} (n-i)g^{\lambda\cup \{(i+1,i)\}},
\end{align*}
where the last equality follows from Lemma~\ref{lem:i,i+1}.
Since
$g^\lambda_{1,1}(+1)= g^{\lambda\cup\{(1,0)\}}$
and, by Lemma~\ref{lem:+k2},
\[
 g^\lambda_n(+1) = \sum_{(i,j)\in \NE(\lambda)} g^{\lambda\cup\{(i,j)\}},
\]
hence we obtain the desired formula for $L$.
\end{proof}

\begin{lem}\label{lem:SSYT_sum2}
  For $\lambda\in\Par^*_n$,
\[
  \sum_{i=0}^{n-1} (n-i)g^{\lambda\cup\{(i+1,i)\}}=
    n (|\lambda|+1)g^\lambda
  - \sum_{(i,j)\in \NE(\lambda)} \lambda_i \, g^{\lambda\cup\{(i,j)\}}.
\]
  \end{lem}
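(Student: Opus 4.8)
The goal is the identity
\[
  \sum_{i=0}^{n-1} (n-i)\,g^{\lambda\cup\{(i+1,i)\}}
  = n(|\lambda|+1)g^\lambda - \sum_{(i,j)\in\NE(\lambda)}\lambda_i\,g^{\lambda\cup\{(i,j)\}},
\]
and my approach is to pass everything through the $q$-integral representations established in Section~\ref{sec:preliminaries}. By Propositions~\ref{prop:SSYT_int} and~\ref{prop:SSYT_int2}, each term $g^{\lambda\cup\{(i+1,i)\}}$ is (up to the common prefactor $C:=(|\lambda|+1)!/\prod_{j=1}^n(\lambda_j-1)!$) the integral $\int_{0\le x_1\le\cdots\le x_n\le 1}(x_{i+1}-x_i)\,\overline a_{\lambda-(1^n)}(x)\,dx$ with the convention $x_{n+1}=1$. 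So the left-hand side becomes $C\int (\sum_{i=0}^{n-1}(n-i)(x_{i+1}-x_i))\,\overline a_{\lambda-(1^n)}(x)\,dx$, and the inner telescoping sum collapses: with $x_0:=0$ one gets $\sum_{i=0}^{n-1}(n-i)(x_{i+1}-x_i) = \sum_{i=1}^{n}x_i = x_1+\cdots+x_n$ after Abel summation (the coefficient of each $x_i$ for $1\le i\le n-1$ is $(n-i)-(n-(i-1)+1)\cdot 0$... more carefully, regrouping by $x_i$ gives coefficient $(n-i) - (n-(i-1)) = -1$ for $x_i$ with $1\le i\le n-1$ together with the $+n$ from $x_n$ and cancellation against $-x_0$ terms; the net result is the clean statement $\sum_{i=0}^{n-1}(n-i)(x_{i+1}-x_i)=x_1+\cdots+x_n$, which I would verify directly by induction on $n$). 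Hence the left-hand side equals $C\int (x_1+\cdots+x_n)\,\overline a_{\lambda-(1^n)}(x)\,dx$.

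First I would note that $\prod_{j=1}^n(\lambda_j-1)! \cdot C = (|\lambda|+1)!$, so it remains to show
\[
  (|\lambda|+1)!\int_{\Delta}(x_1+\cdots+x_n)\,\overline a_{\lambda-(1^n)}(x)\,dx
  = n(|\lambda|+1)g^\lambda - \sum_{(i,j)\in\NE(\lambda)}\lambda_i\,g^{\lambda\cup\{(i,j)\}},
\]
where $\Delta$ is the simplex $0\le x_1\le\cdots\le x_n\le 1$. The strategy for the right-hand side is to express $g^{\lambda\cup\{(i,j)\}}$ for a northeast corner $(i,j)$ — i.e. $\lambda\cup\{(i,j)\}$ has parts $\lambda$ with $\lambda_i$ incremented by $1$ — again as a $q$-integral. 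By Proposition~\ref{prop:SSYT_int} applied to the strict partition $\lambda+e_i$ (with $e_i$ the $i$th standard basis vector), $g^{\lambda\cup\{(i,j)\}} = \frac{(|\lambda|+1)!}{\prod_j (\lambda_j-1)!}\cdot\frac{1}{\lambda_i}\int_\Delta \overline a_{(\lambda+e_i)-(1^n)}(x)\,dx$, since incrementing $\lambda_i$ changes $(\lambda_j-1)!$ in the denominator from $(\lambda_i-1)!$ to $\lambda_i!$, contributing the factor $1/\lambda_i$. The alternant $\overline a_{(\lambda+e_i)-(1^n)}(x)$ differs from $\overline a_{\lambda-(1^n)}(x)$ by multiplying the $i$th row of the determinant by an extra power of the variables, which I expect to massage so that $\sum_i \lambda_i\,g^{\lambda\cup\{(i,j)\}}$ (sum over northeast corners) matches $\int_\Delta(\text{something})\,\overline a_{\lambda-(1^n)}(x)\,dx$ after expanding the determinant by the relevant row.

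**Where the work is.** The main obstacle is the identity at the level of integrands. I would reduce to showing
\[
  \sum_{(i,j)\in\NE(\lambda)}\overline a_{(\lambda+e_i)-(1^n)}(x)
  = \Bigl(n - (x_1+\cdots+x_n)\Bigr)\overline a_{\lambda-(1^n)}(x)
\]
as functions on $\Delta$ (using $(|\lambda|+1)!/\prod(\lambda_j-1)!\cdot g^\lambda \cdot(\text{prefactor})^{-1} = \int_\Delta\overline a_{\lambda-(1^n)}$ from Proposition~\ref{prop:SSYT_int}). The left side is a sum of determinants each obtained from $\det(x_j^{\lambda_i-i+\,?})$ — here I must be careful about the exact shift $\lambda-(1^n)$ versus $\lambda-\delta_n$ — by bumping one row's exponents up by one. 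There is a standard expansion: $\sum_{i}x_k\cdot(\text{row }i\text{ bumped})$ relates to $\det$ with row $i$ having exponent vector shifted, and summing over all valid $i$ (the non-northeast indices contribute zero because the resulting determinant has two equal rows) should telescope via the elementary-symmetric-function / power-sum style identity $\sum_i (\text{bump row }i) = (\sum_k x_k)\cdot(\text{original})$ minus correction terms — this is essentially the Laplace/Jacobi–Trudi manipulation that the $q=1$ specialization of Proposition~\ref{prop:alternant} is built for. I expect the cleanest route is to first prove the purely combinatorial recurrence $(|\lambda|+1)g^\lambda = \sum_{(i,j)}g^{\lambda\cup\{(i,j)\}} + (\text{diagonal extension terms})$ directly (a branching/RSK-type identity: removing the largest entry from an SYT of a one-larger shifted shape), and then feed that into the $q$-integral bookkeeping; alternatively, a generating-function proof via Lemma~\ref{lem:gf} applied to $f(x)=(x_1+\cdots+x_n-n)\overline a_{\lambda-(1^n)}(x)$ and matching coefficients against the alternant expansion from Proposition~\ref{prop:alternant}. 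Either way, the technical heart is verifying that non-northeast row-bumps produce determinants with repeated rows (hence vanish), so only the $\NE(\lambda)$ terms survive, and that the surviving combination reproduces $n-(x_1+\cdots+x_n)$ exactly; the weight $\lambda_i$ on each term is precisely what the factorial denominators in Propositions~\ref{prop:SSYT_int} and~\ref{prop:SSYT_int2} supply.
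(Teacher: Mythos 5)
Your overall strategy is exactly the paper's: represent both sides as integrals of the alternant $\overline a_{\lambda-(1^n)}$ over the simplex via Propositions~\ref{prop:SSYT_int} and \ref{prop:SSYT_int2}, telescope the left-hand side, and use the Pieri rule to identify the corner terms. But there is a concrete indexing error that derails the argument. Proposition~\ref{prop:SSYT_int2} pairs the integrand $x_{i+1}-x_i$ (with $x_{n+1}=1$) with $g^{\lambda\cup\{(n-i+1,\,n-i)\}}$, \emph{not} with $g^{\lambda\cup\{(i+1,i)\}}$: the variables index the diagonal read in reverse. Hence the term $g^{\lambda\cup\{(j+1,j)\}}$ carries weight $n-j$ and integrand $x_{n-j+1}-x_{n-j}$, so the total integrand is $\sum_{i=1}^n i\,(x_{i+1}-x_i)=n-(x_1+\cdots+x_n)$, not $x_1+\cdots+x_n$ as you computed (your telescoping is algebraically fine, but it is attached to the wrong weights). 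A quick sanity check: for $\lambda=(2)$, $n=1$, the left-hand side of the lemma is $g^{(2)\cup\{(1,0)\}}=1$, while your formula gives $\frac{3!}{1!}\int_0^1 x\cdot x\,dx=2$; the correct integrand $1-x$ gives $6\int_0^1(1-x)x\,dx=1$.

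This error propagates into the identity you isolate as the ``technical heart,'' namely
\[
\sum_{(i,j)\in\NE(\lambda)}\overline a_{(\lambda+e_i)-(1^n)}(x)
=\bigl(n-(x_1+\cdots+x_n)\bigr)\,\overline a_{\lambda-(1^n)}(x),
\]
which is false. Writing $\lambda-(1^n)=\mu+\delta_n$, the left side equals $\sum s_{\mu\cup\{(i,j)\}}(x)\,\overline a_{\delta_n}(x)=e_1(x)\,s_\mu(x)\,\overline a_{\delta_n}(x)=(x_1+\cdots+x_n)\,\overline a_{\lambda-(1^n)}(x)$ by the Pieri rule --- there is no $n$ and no minus sign. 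With the corrected integrand $n-e_1(x)$ on the left-hand side of the lemma, everything falls into place exactly as in the paper: the constant $n$ produces $n(|\lambda|+1)g^\lambda$ via Proposition~\ref{prop:SSYT_int}, and the $-e_1(x)$ piece, after Pieri, produces $-\sum_{(i,j)\in\NE(\lambda)}\lambda_i\,g^{\lambda\cup\{(i,j)\}}$ (your accounting of the factor $\lambda_i$ from the factorials is correct, and your observation that non-corner row bumps give determinants with repeated rows is precisely the Pieri rule). So the fix is local, but as written the proof establishes a false intermediate claim. A secondary slip: after noting $\prod_j(\lambda_j-1)!\cdot C=(|\lambda|+1)!$ you replace the prefactor $C$ by $(|\lambda|+1)!$ without multiplying the right-hand side by $\prod_j(\lambda_j-1)!$; the prefactor should remain $C$ throughout.
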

\begin{proof}
  The left hand side can be rewritten as 
  \[
\sum_{i=1}^{n} i\cdot g^{\lambda\cup\{(n-i+1,n-i)\}}.
  \]
  By Proposition~\ref{prop:SSYT_int2}, this is equal to
  \begin{align}
    \notag
&\frac{(|\lambda|+1)!}{\prod_{k=1}^n (\lambda_k-1)!} \sum_{i=1}^{n}
\int_{0\le x_1\le \cdots \le x_n\le 1}(ix_{i+1}-ix_i)\overline{a}_{\lambda-(1^n)}(x)d x\\
    \notag
    &=\frac{(|\lambda|+1)!}{\prod_{k=1}^n (\lambda_k-1)!}
      \int_{0\le x_1\le \cdots \le x_n\le 1}
      (n-x_1-\dots-x_n)\overline{a}_{\lambda-(1^n)}(x)d x\\
\label{eq:e1}
&=n(|\lambda|+1)g^\lambda-
\frac{(|\lambda|+1)!}{\prod_{k=1}^n (\lambda_k-1)!}\int_{0\le x_1\le \cdots \le x_n\le
  1}e_1(x)\overline{a}_{\lambda-(1^n)}(x)d x,
  \end{align}
  where the last equality follows from Proposition~\ref{prop:SSYT_int} and
  $e_1(x)=x_1+\dots+x_n$. Let $\lambda=\delta_{n+1}+\mu$, or equivalently,
  $\lambda-(1^n)=\delta_{n}+\mu$, where we consider $\mu$ as a partition shape.
  Then the integrand of the $q$-integral in \eqref{eq:e1} is
\[
e_1(x)\overline{a}_{\lambda-(1^n)}(x)= e_1(x) \overline{a}_{\mu+\delta_n}(x)
= e_1(x) s_\mu(x) \overline{a}_{\delta_n}(x),
\]
where $s_\mu(x)=a_{\mu+\delta_n}(x)/a_{\delta_n}(x)$ is the Schur function.

By the Pieri rule,
\[
e_1(x) s_\mu(x) = \sum_{(i,j)\in \NE(\mu)} s_{\mu\cup\{(i,j)\}}(x).
\]
Notice that since $\mu\in\Par_n$, there is no $(n+1,j)\in \NE(\mu)$.
Therefore,
\[
  e_1(x) s_\mu(x) \overline{a}_{\delta_n}(x)=
\sum_{(i,j)\in \NE(\mu),\, i\le n} s_{\mu\cup\{(i,j)\}}(x)\overline{a}_{\delta_n}(x) =
\sum_{(i,j)\in \NE(\mu),\, i\le n} \overline{a}_{\mu\cup\{(i,j)\}+\delta_n}(x),
\]
and the second term of \eqref{eq:e1} (without the minus sign) is equal to
\[
\sum_{(i,j)\in \NE(\mu),\, i\le n}\frac{(|\lambda|+1)!}{\prod_{k=1}^n (\lambda_k-1)!}
\int_{0\le x_1\le \cdots \le x_n\le 1}\overline{a}_{\delta_{n+1}+(\mu\cup\{(i,j)\})-(1^n)}(x)d x.
\]
By Proposition~\ref{prop:SSYT_int}, this is equal to
\[
\sum_{(i,j)\in \NE(\mu),\, i\le n}\lambda_i g^{\delta_{n+1}+(\mu\cup\{(i,j)\})}.
\]
Recall that $\lambda$ is a shifted shape and $\mu$ is a partition shape. Since
$(i,j)\in \NE(\mu)$ and $i\le n$ if and only if $(i,n+j)\in \NE(\lambda)$, and
\[
\delta_{n+1}+(\mu\cup\{(i,j)\}) = \lambda\cup\{(i,n+j)\},
\]
the proof is complete.
\end{proof}

The following proposition allows us to express the number of SBTs of any shifted
shape as a sum of the numbers of SYTs of shifted shapes.

\begin{prop}\label{prop:g+1}
  For $\lambda\in\Par^*_n$,
\[
g^\lambda(+1)
= \frac{n(|\lambda|+1)}{2}g^\lambda
+\frac{1}{2}\sum_{(i,j)\in \NE(\lambda)}
  (2j-2n-1-\lambda_i) g^{\lambda\cup\{(i,j)\}}.
\]
\end{prop}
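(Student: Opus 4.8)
The plan is to assemble Proposition~\ref{prop:g+1} from the preceding lemmas by splitting the defining sum $g^\lambda(+1)=\sum_{k=1}^{\lambda_1}g^\lambda_k(+1)$ into a ``diagonal part'' $\sum_{k=1}^{n}g^\lambda_k(+1)$ and a ``tail part'' $\sum_{k=n+1}^{\lambda_1}g^\lambda_k(+1)$, and to treat each separately. For the diagonal part I would start from Lemma~\ref{lem:SSYT_sum1}, which already expresses $\sum_{i=1}^{n}g^\lambda_i(+1)$ as $\tfrac12\sum_{(i,j)\in\NE(\lambda)}g^{\lambda\cup\{(i,j)\}}+\tfrac12\sum_{i=0}^{n-1}(n-i)g^{\lambda\cup\{(i+1,i)\}}$, and then eliminate the second sum using Lemma~\ref{lem:SSYT_sum2}. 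This yields
\[
\sum_{i=1}^{n}g^\lambda_i(+1)=\frac{n(|\lambda|+1)}{2}\,g^\lambda+\frac12\sum_{(i,j)\in\NE(\lambda)}(1-\lambda_i)\,g^{\lambda\cup\{(i,j)\}}.
\]

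For the tail part I would apply Lemma~\ref{lem:+k2}, valid in the range $n\le k\le\lambda_1$, to write $g^\lambda_k(+1)=\sum_{(i,j)\in\NE(\lambda),\,j>k}g^{\lambda\cup\{(i,j)\}}$, then sum over $n+1\le k\le\lambda_1$ and interchange the two summations. The coefficient of a given $g^{\lambda\cup\{(i,j)\}}$ is then the number of integers $k$ with $n+1\le k\le\lambda_1$ and $k<j$. The elementary point here is that, since $\lambda$ is strict with $n$ parts, every northeast corner $(i,j)=(i,i+\lambda_i)$ has column index in the range $n+1\le j\le\lambda_1+1$: the lower bound follows from $\lambda_i\ge\lambda_n+(n-i)\ge n-i+1$, and the upper bound from the inequality $\lambda_i\le\lambda_{i-1}-2$ built into the definition of a northeast corner (with $j=\lambda_1+1$ occurring exactly when $i=1$). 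Granting these bounds, the count collapses to $j-n-1$ for every northeast corner, so that
\[
\sum_{k=n+1}^{\lambda_1}g^\lambda_k(+1)=\sum_{(i,j)\in\NE(\lambda)}(j-n-1)\,g^{\lambda\cup\{(i,j)\}}.
\]

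Adding the two parts, the coefficient of $g^\lambda$ is $\tfrac{n(|\lambda|+1)}{2}$ and the coefficient of each $g^{\lambda\cup\{(i,j)\}}$ is $\tfrac12(1-\lambda_i)+(j-n-1)=\tfrac12(2j-2n-1-\lambda_i)$, which is exactly the claimed identity. No ingredient beyond Lemmas~\ref{lem:+k2}, \ref{lem:SSYT_sum1} and~\ref{lem:SSYT_sum2} is needed; the computation is pure bookkeeping, and the only step calling for a little care is the verification that the index count in the tail part is uniformly $j-n-1$, i.e.\ pinning down the exact range $n+1\le j\le\lambda_1+1$ of the column indices of the northeast corners (including the degenerate case $\lambda=\delta_{n+1}$, where $\NE(\lambda)=\{(1,n+1)\}$ and both sides of the asserted tail identity vanish).
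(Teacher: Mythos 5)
Your proposal is correct and follows exactly the paper's own argument: split $g^\lambda(+1)$ into the columns $1\le k\le n$ (handled by combining Lemmas~\ref{lem:SSYT_sum1} and~\ref{lem:SSYT_sum2}) and the columns $n+1\le k\le\lambda_1$ (handled by Lemma~\ref{lem:+k2}), then add. The only difference is that you spell out the index bookkeeping for the tail sum---verifying that every northeast corner has column index in $[n+1,\lambda_1+1]$ so the count is uniformly $j-n-1$---which the paper leaves implicit.
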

\begin{proof}
  Considering the column containing the extra cell, we have
  \[
    g^\lambda(+1)=\sum_{i=1}^{n}g^\lambda_i(+1)
    + \sum_{i=n+1}^{\lambda_1}g^\lambda_i(+1).
  \]
By Lemmas~\ref{lem:SSYT_sum1} and \ref{lem:SSYT_sum2},
\[
\sum_{i=1}^{n}g^\lambda_i(+1)
 = \frac{n(|\lambda|+1)}{2}g^\lambda
+\frac{1}{2}\sum_{(i,j)\in \NE(\lambda)}
  (1-\lambda_i) g^{\lambda\cup\{(i,j)\}}.
\]
By Lemma~\ref{lem:+k2},
  \[
    \sum_{i=n+1}^{\lambda_1}g^\lambda_i=
    \sum_{(i,j)\in \NE(\lambda)} (j-n-1)g^{\lambda\cup\{(i,j)\}}.
  \]
The proof follows by adding the above two equations.
\end{proof}

Now we can prove Theorem~\ref{thm:main} easily. 

\begin{proof}[Proof of Theorem~\ref{thm:main}]
Proposition~\ref{prop:g+1} can be restated as
\[
g^\lambda(+1)
= \frac{n(|\lambda|+1)}{2}g^\lambda
+\frac{1}{2} \sum_{k:\lambda_k\le \lambda_{k-1}-2}(\lambda_k-2n+2k-1)
 g^{\lambda\cup\{(k,k+\lambda_k)\}}.
\]
Using the formula \eqref{eqn:Thrall}, we obtain the result.
\end{proof}

%----------------------------------------------------------------------------------------------------------

\section{Enumeration of special classes of shifted SBTs}
\label{sec:enum-stand-barely}
%----------------------------------------------------------------------------------------------------------

In this section we compute the expectation $\EE(Y)$ for the lower intervals
$[\emptyset,\lambda]$ described in Theorems~\ref{thm:RTY1},
\ref{thm:shifted-balanced}, and \ref{thm:trapezoidal}: the cases that $\lambda$
is $\delta_d +\delta_e (a^a)$, $\lambda$ is a balanced shifted shape, and
$\lambda$ is the trapezoidal shape $(m+2n, m+2n-2,\dots, m+2)$.

Recall that since
\[
\EE(Y) = \frac{g^\lambda(+1)}{(|\lambda|+1) g^\lambda },
\]
and $g^\lambda$ is known, computing $\EE(Y)$ is equivalent to computing
$g^\lambda(+1)$.

The following theorem computes the expectation $\EE(Y)$ for the lower interval
$[\emptyset,\lambda]$ when $\lambda$ is $\delta_d +\delta_e (a^a)$. This shows
the formula for $\EE(Y)$ in Theorem~\ref{thm:RTY1}.

\begin{thm}\label{thm:conj24}
Let $\lambda=\delta_d +\delta_e  (a^a)$ with $a,d,e\ge 1$ and $d>a(e-1)+1$. Then
$$ g^\lambda (+1)=(|\lambda|+1)\frac{d+a(e-1)}{4} g^\lambda. $$
Equivalently, the expectation $\EE(Y)$ for the lower interval
$[\emptyset,\lambda]$ is equal to 
\[
\EE(Y) = \frac{d+a(e-1)}{4 }.
\]
\end{thm}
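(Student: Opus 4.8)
The plan is to apply Theorem~\ref{thm:main} (equivalently Proposition~\ref{prop:g+1}) directly to the shape $\lambda=\delta_d+\delta_e(a^a)$ and then simplify the resulting finite sum into the claimed product. First I would identify the parts of $\lambda$ explicitly: with $n=\ell(\lambda)$, the diagram consists of $a$ copies of each of $e$ distinct ``block lengths'' coming from $\delta_e(a^a)$, shifted by the staircase $\delta_d$; I would write $\lambda_i$ in closed form and record which indices $k$ satisfy $\lambda_k\le\lambda_{k-1}-2$, i.e.\ which rows index a northeast corner. Because the blocks have width $a$, consecutive rows within a block typically differ by $1$ (from the staircase part) so they do \emph{not} contribute a northeast corner; only the $e$ rows at the bottom of each block (and possibly the very last row) contribute, so the sum in \eqref{eq:g+1} has roughly $e$ terms rather than $n$ terms.

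Next I would substitute these values of $\lambda_i$ into the expression
\[
\EE(Y)=\frac12\left(n+\sum_{k:\lambda_k\le\lambda_{k-1}-2}\frac{\lambda_k-2n+2k-1}{\lambda_k+1}\prod_{i\ne k}\frac{\lambda_k+\lambda_i}{\lambda_k-\lambda_i}\cdot\frac{\lambda_k-\lambda_i+1}{\lambda_k+\lambda_i+1}\right),
\]
and try to recognize each summand, and the sum over the $e$ corner-rows, as a terminating hypergeometric (basic or ordinary) series. The products over $i\ne k$ split according to which block row $i$ lies in; within a fixed block the factors telescope or collect into ratios of factorials / Pochhammer symbols, and the $a$-fold repetition of each block length turns products into $a$-th-power-type factors that combine into ${}_pF_{q}$ or ${}_p\phi_q$ arguments. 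The hypothesis $d>a(e-1)+1$ should be exactly what guarantees strictness of $\lambda$ and non-vanishing of the relevant denominators, and also what makes the hypergeometric series well-poised enough to be summable (I would expect a Pfaff--Saalsch\"utz or Dixon/Dougall-type evaluation, or a $q\to1$ limit of the $q$-integral identities from \cite{KimStanton17}). After the summation I would be left with $\frac12(n+(\text{product}))$ and the final task is the algebraic identity $\frac12(n+\text{product})=\frac{d+a(e-1)}{4}$; since $\lambda_1=d+2a(e-1)$ one checks this agrees with $\frac{\lambda_1+1}{4}$ from Theorem~\ref{thm:RTY1} up to the shift in indexing, which is a routine consistency check.

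The main obstacle I anticipate is the combinatorial bookkeeping of the parts $\lambda_i$ and the evaluation of the inner products $\prod_{i\ne k}$: these products range over all $n$ rows, grouped into $e$ blocks of $a$ equal entries each, and getting the Pochhammer-symbol rearrangement right — especially the sign and the placement of the ``$+1$'' shifts in $\lambda_k-\lambda_i+1$ and $\lambda_k+\lambda_i+1$ — is where errors creep in. A cleaner route, which I would pursue in parallel, is to avoid the fully explicit sum and instead go back one step to Proposition~\ref{prop:SSYT_int} and Proposition~\ref{prop:SSYT_int2}, writing $g^\lambda(+1)$ and $g^\lambda$ as $q\to1$ limits of multivariate $q$-integrals of alternants $\overline{a}_{\lambda-(1^n)}$; for the highly structured shape $\delta_d+\delta_e(a^a)$ these alternants factor nicely (the repeated block lengths make the Schur function $s_{\delta_e(a^a)}$ a product), and the integral can be evaluated by iterated application of the one-variable $q$-beta integral. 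That would reduce the problem to a single clean $q$-integral evaluation plus a limit, bypassing the hypergeometric summation; either way, the final step $\EE(Y)=\frac{d+a(e-1)}{4}$ falls out after elementary simplification and reconciles with Theorem~\ref{thm:RTY1}.
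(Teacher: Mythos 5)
Your plan is essentially the paper's proof: substitute $\lambda=\delta_d+\delta_e(a^a)$ into Proposition~\ref{prop:g+1}/\eqref{eq:g+1}, note that only the $e$ rows $i=ja+1$ ($0\le j\le e-1$) give northeast corners, rewrite the hook-length ratios as Pochhammer symbols, and sum the resulting terminating very-well-poised series by Dougall's ${}_5F_4$ formula --- exactly the route the authors take. Two small points: the paper's key simplification, which you should make explicit, is that the linear factor $\lambda_k-2n+2k-1$ equals the constant $2-d+a(e-1)$ at \emph{every} northeast corner of this balanced shifted shape, so it pulls out of the sum and the whole identity reduces to showing that $\sum_{(i,j)\in\NE(\lambda)}g^{\lambda\cup\{(i,i+\lambda_i)\}}/\bigl((|\lambda|+1)g^\lambda\bigr)=1/2$; and your consistency check has a slip, since $\lambda_1=(d-1)+a(e-1)$ (not $d+2a(e-1)$), which gives $\frac{\lambda_1+1}{4}=\frac{d+a(e-1)}{4}$ on the nose.
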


\begin{proof}
We want to show that
$$
\frac{g^\lambda (+1)}{(|\lambda|+1)g^\lambda}=\frac{d+a(e-1)}{4}.
$$
By Proposition~\ref{prop:g+1} we have (using $n=d-1$)
\begin{align*}
\frac{g^\lambda (+1)}{(|\lambda|+1)g^\lambda}&=\frac{d-1}2+\frac 12
(2-d+a(e-1))\,G,\\
\intertext{where}
G&=\sum_{(i,j)\in\NE(\lambda)}
\frac{g^{\lambda\cup \{(i,i+\lambda_i) \} }}{(|\lambda|+1)g^\lambda}.
\end{align*}
(Notice that the factor $2-d+a(e-1)$ was pulled out of the sum.
The shifted-balanced shape of the partition $\lambda$ made this possible!)
Since
\begin{equation*}
\frac{d+a(e-1)}{4}-\frac{d-1}2=\frac{2-d+a(e-1)}4,
\end{equation*}
we thus need to show that $G=1/2$.
Now, to compute $G$, we observe that 
for $\lambda=\delta_d +\delta_e ( a^a)$ we have
$\NE(\lambda) =\{ (i, i+\lambda_i)~|~ i=ja +1 \text{ for } 0\le j\le e-1\}$,
where $\lambda_i = d-1 +a(e-1-2j)$.
By the shifted hook-length formula \eqref{eqn:Thrall}, and a rewriting
of the product in terms of Pochhammer symbols (cf.\ \cite{AAR}) we have
\begin{equation*}
\frac{g^{\lambda\cup \{(i,i+\lambda_i) \} }}{(|\lambda|+1)g^\lambda}
=\frac{(2j)!}{2^{4e-3}(j!)^2} \frac{(2e-2-2j)!}{((e-j-1)!)^2} \frac{\left(\frac{2d-1}{a}+1-2j \right)_{2e-2}}{\left(\left(\frac{2d-1}{2a}+1-j \right)_{e-1} \right)^2}
\frac{(2d-1+2a(e-1-2j))}{(2d-1-2ja)},
\end{equation*}
for each $i=ja +1$, $0\le j\le e-1$.
The identity that we need to show to establish the theorem is
\begin{equation}\label{eqn:identitytoshow1}
2^{4e-4}=\sum_{j=0}^{e-1} \frac{(2j)!}{(j!)^2} \frac{(2e-2-2j)!}{((e-j-1)!)^2} \frac{\left(\frac{2d-1}{a}+1-2j \right)_{2e-2}}{\left(\left(\frac{2d-1}{2a}+1-j \right)_{e-1} \right)^2}
\frac{(2d-1+2a(e-1-2j))}{(2d-1-2ja)}.
\end{equation}
The right hand side of \eqref{eqn:identitytoshow1} can be rewritten as
\begin{align}\label{eqn:identitytoshow2}
&\frac{(2e-2)!}{2((e-1)!)^2}\frac{\left( \frac{2d-1}{a}+1\right)_{2e-2}}{\left(\left(\frac{2d-1}{2a}+1 \right)_{e-1} \right)^2}\frac{(1-2d+2a(1-e))}{(1-2d)}\\\notag
&\times \sum_{j=0}^{e-1}\frac{\left(\frac{1-2d}{2a}+1-e, \frac{1-2d}{4a}+\frac{3}{2}-\frac{e}{2}, \frac{1}{2}, 1-e, \frac{1-2d}{2a}+\frac{1}{2} \right)_j}{\left(1, \frac{1-2d}{4a}+\frac{1}{2}-\frac{e}{2}, \frac{3}{2}-e+\frac{1-2d}{2a}, \frac{1-2d}{2a}+1, \frac{3}{2}-e \right)_j}.
\end{align}
To evaluate the terminating series in \eqref{eqn:identitytoshow2}, we apply Dougall's summation formula \cite[(2.1.7)]{GR}
\begin{align*}
&{}_5 F_4\!\left[\begin{matrix} \mathsf{a}, 1+\frac{1}{2}\mathsf{a}, \mathsf{b},\mathsf{c},\mathsf{d}\\
\frac{1}{2}\mathsf{a}, 1+\mathsf{a}-\mathsf{b}, 1+\mathsf{a}-\mathsf{c}, 1+\mathsf{a}-\mathsf{d} \end{matrix}\,;1\right]\\
&=\frac{\Gamma(1+\mathsf{a}-\mathsf{b})\Gamma(1+\mathsf{a}-\mathsf{c})\Gamma(1+\mathsf{a}-\mathsf{d})\Gamma(1+\mathsf{a}-\mathsf{b}-\mathsf{c}-\mathsf{d})}{\Gamma(1+\mathsf{a})\Gamma(1+\mathsf{a}-\mathsf{b}-\mathsf{c})\Gamma(1+\mathsf{a}-\mathsf{b}-\mathsf{d})\Gamma(1+\mathsf{a}-\mathsf{c}-\mathsf{d})},
\end{align*}
with $(\mathsf{a},\mathsf{b},\mathsf{c},\mathsf{d})\mapsto \left( \frac{1-2d}{2a}+1-e, \frac{1}{2}, 1-e, \frac{1-2d}{2a}+\frac{1}{2}\right)$. As a result, we obtain
$$\sum_{j=0}^{e-1}\frac{\left(\frac{1-2d}{2a}+1-e, \frac{1-2d}{4a}+\frac{3}{2}-\frac{e}{2}, \frac{1}{2}, 1-e, \frac{1-2d}{2a}+\frac{1}{2} \right)_j}{\left(1, \frac{1-2d}{4a}+\frac{1}{2}-\frac{e}{2}, \frac{3}{2}-e+\frac{1-2d}{2a}, \frac{1-2d}{2a}+1, \frac{3}{2}-e \right)_j}
=\frac{\left( \frac{1-2d}{2a}+2-e\right)_{e-1}(1-e)_{e-1}}{\left( \frac{1-2d}{2a}+\frac{3}{2}-e\right)_{e-1}\left(\frac{3}{2}-e \right)_{e-1}}.$$
Now it is not very hard to show that
$$\frac{(2e-2)!}{2((e-1)!)^2}\frac{\left( \frac{2d-1}{a}+1\right)_{2e-2}}{\left(\left(\frac{2d-1}{2a}+1 \right)_{e-1} \right)^2}\frac{(1-2d+2a(1-e))}{(1-2d)} \frac{\left( \frac{1-2d}{2a}+2-e\right)_{e-1}(1-e)_{e-1}}{\left( \frac{1-2d}{2a}+\frac{3}{2}-e\right)_{e-1}\left(\frac{3}{2}-e \right)_{e-1}}=2^{4e-4},$$
which completes the proof.
\end{proof}

\begin{figure}[ht]
\begin{tikzpicture}[scale=.43]
\draw[fill=gray!35] (9,2) rectangle (11,9);
\draw[fill=gray!35] (11,4) rectangle (12,9);
\draw[fill=gray!35] (12,5) rectangle (15,9);
\draw[fill=gray!35] (15,8) rectangle (16,9);
\draw (0,8)--(0,9)--(16,9)--(16,8)--(15,8)--(15,5)--(12,5)--(12,4)--(11,4)--(11,2)--(9,2)--(9,0)--(8,0)--(8,1)--(7,1)--(7,2)--(6,2)--(6,3)--(5,3)--(5,4)--(4,4)--(4,5)--(3,5)--(3,6)--(2,6)--(2,7)--(1,7)--(1,8)--(0,8)--cycle;
\draw (1,8)--(15,8);
\draw (2,7)--(15,7);
\draw (3,6)--(15,6);
\draw (4,5)--(12,5);
\draw (5,4)--(11,4);
\draw (6,3)--(11,3);
\draw (7,2)--(9,2);
\draw (8,1)--(9,1);
\draw (1,8)--(1,9);
\draw (2,7)--(2,9);
\draw (3,5)--(3,9);
\draw (4,4)--(4,9);
\draw (5,3)--(5,9);
\draw (6,2)--(6,9);
\draw (7,1)--(7,9);
\draw (8,0)--(8,9);
\draw (10,2)--(10,9);
\draw (9,2)--(9,9);
\draw (11,4)--(11,9);
\draw (12,5)--(12,9);
\draw (13,5)--(13,9);
\draw (14,5)--(14,9);
\draw (15,8)--(15,9);
\draw[thick, dotted] (9,2)--(16,9);
\end{tikzpicture}
\caption{$\lambda=\delta_{10} +\nu$ with $k=7$}
\end{figure}
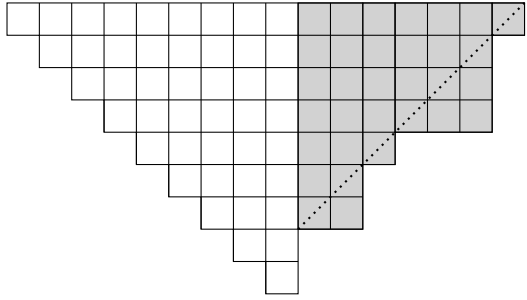

The following theorem computes the expectation $\EE(Y)$ for the lower interval
$[\emptyset,\lambda]$ when $\lambda$ is a balanced shifted shape. This shows the
formula for $\EE(Y)$ in Theorem~\ref{thm:shifted-balanced}.

\begin{thm}\label{thm:thm4.2}
  Let $0\le k<n$ and let $\nu$ be a regular partition which as a straight shape
  is balanced with height and width both equal to $k$.
\begin{enumerate}
\item Let $\lambda=\delta_{n+1}+\nu$. Then
$$g^\lambda(+1)=(|\lambda|+1)\frac{n+1+k}{4} g^\lambda. $$
Equivalently, the expectation $\EE(Y)$ for the lower interval
$[\emptyset,\lambda]$ is equal to 
\[
\EE(Y) = \frac{n+1+k}{4}.
\]
\item Let $\lambda=\delta_{n+1}+(n-1-k)^n + \nu$. Then
$$g^\lambda(+1)=(|\lambda|+1)\frac{n}{2} g^\lambda. $$
Equivalently, the expectation $\EE(Y)$ for the lower interval
$[\emptyset,\lambda]$ is equal to 
\[
\EE(Y) = \frac{n}{2}.
\]
\end{enumerate}
\end{thm}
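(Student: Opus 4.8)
The plan is to follow the proof of Theorem~\ref{thm:conj24}: apply Proposition~\ref{prop:g+1} and exploit the geometry of balanced shifted shapes to pull a constant out of the sum over northeast corners. Write $\lambda=\delta_{n+1}+\mu$, so that $\mu=\nu$ in part~(1) and $\mu=(n-1-k)^n+\nu$ in part~(2); in both cases $\lambda$ is a balanced shifted Young diagram. A northeast corner $(i,i+\lambda_i)$ of $\lambda$ with $1\le i\le n$ corresponds to an inner corner of $\mu$, together with the single extra corner produced by appending a new row to $\nu$, and the coefficient of $g^{\lambda\cup\{(i,j)\}}$ in Proposition~\ref{prop:g+1} equals $2(i+\lambda_i)-2n-1-\lambda_i=i+\mu_i-n$.

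The key observation is that, since $\nu$ is balanced of slope $1$ with height and width $k$, its inner corners all lie on one antidiagonal line; concretely $i+\nu_i=k+1$ at every inner-corner row $i$ of $\nu$, and the same identity holds at the new-row corner $i=k+1$. Substituting $\mu_i$ in terms of $\nu_i$ then shows that $i+\mu_i-n$ takes the \emph{same} value at every $(i,j)\in\NE(\lambda)$: it equals $k+1-n$ in part~(1) and $0$ in part~(2). Part~(2) is therefore immediate---the sum in Proposition~\ref{prop:g+1} vanishes identically, giving $g^\lambda(+1)=\tfrac{n(|\lambda|+1)}{2}g^\lambda$, i.e.\ $\EE(Y)=n/2$.

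For part~(1), pulling the constant $k+1-n$ out of the sum, Proposition~\ref{prop:g+1} gives
\[
\EE(Y)=\frac{n}{2}+\frac{k+1-n}{2}\,G,\qquad
G:=\sum_{(i,j)\in\NE(\lambda)}\frac{g^{\lambda\cup\{(i,j)\}}}{(|\lambda|+1)g^\lambda},
\]
so the asserted value $\EE(Y)=(n+1+k)/4$ is equivalent to $G=\tfrac12$ when $k+1\ne n$, while the case $k+1=n$ is immediate since then the constant is $0$ and $(n+1+k)/4=n/2$. To prove $G=\tfrac12$ I would insert Thrall's shifted hook length formula~\eqref{eqn:Thrall} for $g^\lambda$ and for each $g^{\lambda\cup\{(i,j)\}}$; using that $\lambda_i=n+k+2-2i$ at every northeast corner, each summand becomes an explicit product of linear factors, which, once rewritten with Pochhammer symbols, turns $G$ into a single terminating well-poised hypergeometric series that I would evaluate by Dougall's ${}_5F_4$-summation, exactly as in the proof of Theorem~\ref{thm:conj24}, and then simplify the resulting quotient of Gamma/Pochhammer factors to $\tfrac12$. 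An alternative route is to evaluate $\sum_{(i,j)\in\NE(\lambda)}g^{\lambda\cup\{(i,j)\}}$ directly via the alternant/$q$-integral machinery of Section~\ref{sec:preliminaries}: by Proposition~\ref{prop:SSYT_int} and the Pieri rule (as in the proof of Lemma~\ref{lem:SSYT_sum2}) this sum is a constant multiple of $\int_{0\le x_1\le\dots\le x_n\le1}\bigl(\sum_i \lambda_i^{-1}s_{\nu\cup\{(i,\nu_i+1)\}}(x)\bigr)\overline{a}_{\delta_n}(x)\,dx$, and one then shows this integral equals half of $\int_{0\le x_1\le\dots\le x_n\le1}s_\nu(x)\overline{a}_{\delta_n}(x)\,dx$.

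The main obstacle is exactly this evaluation for a \emph{general} balanced straight shape $\nu$. Unlike the rectangular staircases $\delta_e(a^a)$ handled in Theorem~\ref{thm:conj24}, a general balanced $\nu$ has irregularly spaced inner corners, so one must organize the summation carefully---for instance summing over all $1\le i\le k+1$ and observing that the summand vanishes unless $(i,\nu_i+1)$ is an actual inner corner---and check that the well-poised structure, hence the applicability of Dougall's formula, survives the non-uniform spacing. A minor point along the way is to confirm that the extra corner at $(k+1,n-k)$, arising from appending a new row to $\nu$, obeys the same relations as the genuine northeast corners of $\lambda$; this is where the hypothesis $k<n$ is used.
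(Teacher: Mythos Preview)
Your reduction is exactly right: in both parts the coefficient $2j-2n-1-\lambda_i=i+\mu_i-n$ is constant on $\NE(\lambda)$, giving part~(2) immediately and reducing part~(1) to the single statement $G=\tfrac12$ (when $k+1\ne n$). This matches the paper's strategy.

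The gap is in how you propose to prove $G=\tfrac12$. A general balanced $\nu$ of slope $1$ has the form $\nu=((a_1+\cdots+a_\ell)^{a_1},\dots,(a_\ell)^{a_\ell})$ with arbitrary positive integers $a_1,\dots,a_\ell$ summing to $k$, so the northeast-corner rows $r_i=a_1+\cdots+a_i+1$ are \emph{not} in arithmetic progression. Consequently the sum $G$ is not a hypergeometric series in a single index running over consecutive integers, and Dougall's ${}_5F_4$ summation cannot be applied. (In Theorem~\ref{thm:conj24} the corners are at rows $ja+1$, $0\le j\le e-1$, which \emph{are} equally spaced; that regularity is precisely what produces the well-poised ${}_5F_4$.) Your suspicion that ``the well-poised structure \dots\ survives the non-uniform spacing'' is unfounded: it does not.

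What the paper does instead is write out the ratio $g^{\lambda\cup\{(r_i,c_i)\}}/((|\lambda|+1)g^\lambda)$ from Thrall's formula and recognize the resulting sum as a specialization of the elementary partial-fraction/Lagrange-interpolation identity
\[
1=\sum_{i=0}^{\ell}\frac{\prod_{0\le j\le \ell-1}(b_i-c_j)(u-b_i-c_j)}{\prod_{0\le j\le \ell,\,j\ne i}(b_i-b_j)(u-b_i-b_j)},
\]
obtained by substituting $\mathsf b_i\mapsto (b_i-u/2)^2$, $\mathsf c_j\mapsto (c_j-u/2)^2$ into the standard Lagrange formula and letting $x\to\infty$. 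The specialization is $b_i=a_1+\cdots+a_i$, $c_j=(a_1+\cdots+a_j)+a_{j+1}/2$, $u=a_1+\cdots+a_\ell+n-\tfrac12$. This handles the irregular spacing for free, since the $b_i$ and $c_j$ can be arbitrary. Your alternative integral route is also not workable as stated: the Pieri step already appears inside Lemma~\ref{lem:SSYT_sum2}, and reducing the resulting integral to $\tfrac12$ of the original is no easier than the combinatorial sum.
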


\begin{proof}
Since $\nu$ is balanced, we can write
$\nu=((a_1+a_2+\cdots +a_\ell)^{a_1}, (a_2+\cdots +a_\ell)^{a_2},\dots, (a_\ell)^{a_\ell})$, for $a_i$'s such that $a_i\ge 1$ for all $1\le i\le \ell$ and $a_1+\cdots +a_\ell=k$.
Then $\NE(\lambda)=\{ (r_i , c_i)~|~ r_i =a_1+\cdots +a_i +1,~c_i = r_i+\lambda_{r_i}, \text{ for } i=0,\dots, \ell\}.$

In the case when  $\lambda=\delta_{n+1}+\nu$,  using the fact that $\lambda_{a_1+\cdots +a_i +1}=n+k-2(a_1+\cdots +a_i)$, for $i=0,\dots, \ell$,  by expressing $g^{\lambda\cup\{(r_i ,c_i ) \}}$ in terms of $g^\lambda$ and comparing the terms in  equation \eqref{eq:g+1},
the identity that we need to prove becomes
\begin{multline}\label{eqn:thm4.2}
1= \sum_{i=0}^\ell \frac{\frac{1}{2}a_i}{a_i} \frac{(a_i +\frac{1}{2}a_{i-1})}{(a_i +a_{i-1})}\frac{(a_i +a_{i-1}+\frac{1}{2}a_{i-2})}{(a_i +a_{i-1}+a_{i-2})}\cdots
\frac{(a_i +a_{i-1}+\cdots + a_2 +\frac{1}{2}a_{1})}{(a_i +a_{i-1}+\cdots +a_2+a_1)}\\
\times \frac{\frac{1}{2}a_{i+1}}{a_{i+1}} \frac{(a_{i+1} +\frac{1}{2}a_{i+2})}{(a_{i+1} +a_{i+2})}\frac{(a_{i+1} +a_{i+2}+\frac{1}{2}a_{i+3})}{(a_{i+1} +a_{i+2}+a_{i+3})}\cdots
\frac{(a_{i+1} +a_{i+2}+\cdots + a_{\ell-1} +\frac{1}{2}a_{\ell})}{(a_{i+1} +a_{i+2}+\cdots +a_{\ell -1}+a_\ell)}\\
\times \frac{(n+\frac{1}{2}-r_i+a_\ell +a_{\ell -1 }+\cdots +a_{i+1})}{(n+\frac{1}{2}-r_i)}\\
\times \frac{(n+\frac{1}{2}-r_i+\frac{1}{2}a_\ell)}{(n+\frac{1}{2}-r_i+a_\ell)}\frac{(n+\frac{1}{2}-r_i+a_\ell +\frac{1}{2}a_{\ell-1})}{(n+\frac{1}{2}-r_i+a_\ell+a_{\ell-1})}\cdots
\frac{(n+\frac{1}{2}-r_i+a_\ell +a_{\ell -1}+\cdots +a_2 +\frac{1}{2}a_{1})}{(n+\frac{1}{2}-r_i+a_\ell+a_{\ell-1}+\cdots +a_2 +a_1)},
\end{multline}
where $r_i =a_1+\cdots +a_i +1$.
This identity can be proved by partial fraction decomposition, or by Lagrange interpolation.
The latter says that if $f(x)$ is a polynomial of degree $\ell$, then
\begin{equation*}
f(x)=\sum_{i=0}^\ell f(\mathsf{b}_i)\prod_{0\le j\le \ell,\, j\neq i}\frac{x-\mathsf{b}_j}{\mathsf{b}_i-\mathsf{b}_j},
\end{equation*}
where the $\mathsf{b}_i$'s are all distinct. Dividing both sides of this identity by $f(x)$,
choosing $f(x)=\prod_{j=0}^{\ell-1}(x-\mathsf{c}_j)$ and letting $x\to\infty$ we obtain
the well-known formula
\begin{equation*}
1=\sum_{i=0}^\ell \frac{\prod_{0\le j\le \ell-1}(\mathsf{b}_i-\mathsf{c}_j)}
{\prod_{0\le j\le\ell,\, j\neq i}(\mathsf{b}_i-\mathsf{b}_j)}.
\end{equation*}
By making the substitutions $\mathsf{b}_i\mapsto (b_i-u/2)^2$ and $\mathsf{c}_j\mapsto(c_j-u/2)^2$,
for $0\le i\le \ell$ and $0\le j\le \ell-1$,
we  obtain the equivalent identity
\begin{equation}\label{eq:li}
1=\sum_{i=0}^\ell \frac{\prod_{0\le j\le \ell-1}(b_i-c_j)(u-b_i-c_j)}
{\prod_{0\le j\le \ell,\, j\neq i}(b_i-b_j)(u-b_i-b_j)}.
\end{equation}
Now, the sum in \eqref{eqn:thm4.2} can be evaluated by specializing the parameters in \eqref{eq:li}
as $b_i\mapsto a_1+\cdots+a_i$, for $0\le i\le \ell$,
$c_j\mapsto (a_1+\cdots+a_j)+a_{j+1}/2$, for $0\le j\le \ell-1$, and
$u\mapsto a_1+\dots+a_l+n-1/2$.
This establishes the first part (1) of the theorem.

For the second part (2) of the theorem, we instead 
set $\lambda=\delta_{n+1}+(n-1-k)^n + \nu$, then $\lambda_{r_i}= 2n+1-2 r_i$  where $r_i = a_1+\cdots + a_i +1$, for  $i=0,\dots, \ell$.
By using this fact in Proposition \ref{prop:g+1}, we obtain
$$g^\lambda(+1)=(|\lambda|+1)\frac{n}{2} g^\lambda$$
which completes the proof.
\end{proof}

Finally we consider the case that $\lambda$ is the trapezoidal shape $(m+2n,
m+2n-2,\dots, m+2)$ whose typical diagram is as shown in
Figure~\ref{fig:trapezoidal}.

\begin{figure}[ht]
\begin{tikzpicture}[scale=.43]
\draw (3,3)--(2,3)--(2,4)--(1,4)--(1,5)--(0,5)--(0,6)--(19,6)--(19,5)--(18,5)--(18,4)--(17,4)--(17,3)--(16,3)--(16,2)--(15,2)--(15,1)--(14,1)--(14,0)--(5,0)--(5,1)--(4,1)--(4,2)--(3,2)--(3,3)--cycle;
%\draw (4,2)--(4,1)--(5,1)--(5,0)--(14,0)--(14,1)--(15,1)--(15,2);
\draw [snake=brace,color=blue] (0, 6.2)--(5.9, 6.2);
\draw [snake=brace,color=blue] (6.1, 6.2)--(12.9, 6.2);
\draw [snake=brace,color=blue] (13.1, 6.2)--(19, 6.2);
\draw[dashed] (6,0)--(6,6);
\draw[dashed] (13,0)--(13,6);
\node[] at (3, 6.85) {$n$};
\node[] at (9.5, 6.85) {$m$};
\node[] at (16, 6.85) {$n$};
\end{tikzpicture}
\caption{$\lambda=(m+2n, m+2n-2,\dots, m+2)$}
\label{fig:trapezoidal}
\end{figure}
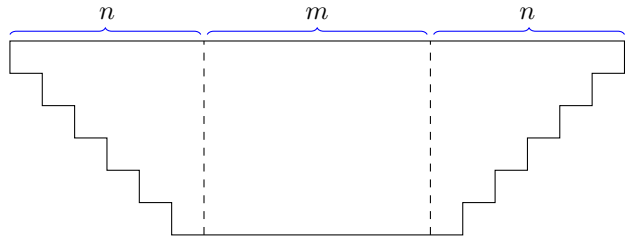

Note that in this case, by \eqref{eqn:Thrall},
\begin{equation}\label{eqn:syt}
g^\lambda= \frac{(nm+n^2+n)!}{\prod_{i=1}^n (m+2i)!} \prod_{i=1}^{n-1}\dfrac{i !  (m+1+i)!}{(m+1+2 i )!}.
\end{equation}

The following theorem shows the formula for $\EE(Y)$ in
Theorem~\ref{thm:trapezoidal}.

\begin{thm}\label{thm:g+1 trapezoidal}
  Let $\lambda=(m+2n, m+2n-2,\dots, m+2)$. Then
$$g^\lambda(+1)= (|\lambda|+1)\frac{|\lambda|}{\lambda_1 +1 } g^\lambda= \frac{(mn+n^2+n)(mn+n^2+n+1)!}{(m+2n+1)\prod_{i=1}^n (m+2i)!}\prod_{i=1}^{n-1}\frac{i! (m+1+i)!}{(m+1+2i)!}.$$
Equivalently, the expectation $\EE(Y)$ for the lower interval
$[\emptyset,\lambda]$ is equal to 
\[
\EE(Y) = \frac{g^\lambda(+1)}{(|\lambda|+1) g^\lambda } =
\frac{|\lambda|}{\lambda_1 +1 }.
\]

\end{thm}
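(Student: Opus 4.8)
The plan is to follow the strategy already used in the proofs of Theorems~\ref{thm:conj24} and~\ref{thm:thm4.2}: start from Proposition~\ref{prop:g+1} (equivalently the expression \eqref{eq:g+1} for $\EE(Y)$), specialize it to the trapezoidal shape, and reduce the statement to a single hypergeometric-type summation identity.

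First I would identify $\NE(\lambda)$. Writing $\lambda_i = t - 2i$ with $t = m+2n+2$, we have $\lambda_{i-1}-\lambda_i = 2$ for every $2\le i\le n$, so every row contributes a northeast corner and $\NE(\lambda) = \{(i,i+\lambda_i) : 1\le i\le n\}$. The key observation — the exact analogue of what the shifted-balanced structure provided in Theorems~\ref{thm:conj24} and~\ref{thm:thm4.2} — is that for the trapezoidal shape the coefficient $\lambda_k-2n+2k-1$ occurring in \eqref{eq:g+1} equals $m+1$ for \emph{every} $k$, hence can be pulled out of the sum. After substituting $\lambda_i = t-2i$ and cancelling the evident factors of $2$, formula \eqref{eq:g+1} becomes
\[
\EE(Y) = \tfrac12\bigl(n + (m+1)\,S(t)\bigr),\qquad
S(t) := \sum_{k=1}^n \frac{1}{t-2k+1}\prod_{\substack{1\le i\le n\\ i\ne k}}\frac{(t-i-k)(2i-2k+1)}{(i-k)(2t-2i-2k+1)}.
\]
Since $|\lambda| = mn+n^2+n$ and $\lambda_1+1 = m+2n+1 = t-1$, a short computation shows that the desired equality $\EE(Y) = |\lambda|/(\lambda_1+1)$ is equivalent to the single identity $S(t) = n/(t-1)$.

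The heart of the proof is this evaluation of $S(t)$. In each factor the pairs $\{\,i-k,\ t-i-k\,\}$ and $\{\,2i-2k+1,\ 2t-2i-2k+1\,\}$ have $i$-independent sums ($t-2k$ and $2t-4k+2$), so after reinstating the ``missing'' $i=k$ terms, $S(t)$ is, up to elementary factors, a terminating very-well-poised hypergeometric series; I would therefore evaluate it by a classical summation for such series, in the same spirit as the use of Dougall's ${}_5F_4$ sum in the proof of Theorem~\ref{thm:conj24}. Alternatively, $S(t) = n/(t-1)$ should follow from a Lagrange-interpolation / partial-fraction identity of the type \eqref{eq:li} used in the proof of Theorem~\ref{thm:thm4.2}. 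A purely analytic route is also available: viewing $S(t)$ as a rational function of $t$, one checks that it vanishes at infinity and that its only pole is a simple pole at $t=1$ with residue $n$ — the apparent pole of the $k$-th summand at $t=2k-1$ is cancelled, for $k\ge 2$, by the factor arising from $i=k-1$ in the numerator product, while the apparent half-integer poles cancel in pairs between the $k$-th and the $(p-k)$-th summands — and the partial-fraction expansion then forces $S(t) = n/(t-1)$.

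Finally, combining $\EE(Y) = |\lambda|/(\lambda_1+1)$ with $\EE(Y) = g^\lambda(+1)/((|\lambda|+1)g^\lambda)$ and the shifted hook-length evaluation \eqref{eqn:syt} of $g^\lambda$ yields the explicit product formula in the statement. I expect the main obstacle to be the evaluation of $S(t)$: pinning down the precise very-well-poised form and the summation theorem that applies to it, or, in the residue approach, carrying out the pairwise cancellation of the half-integer pole residues.
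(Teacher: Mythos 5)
Your reduction is correct and coincides with the paper's: the paper also specializes Proposition~\ref{prop:g+1}/\eqref{eq:g+1} to $\NE(\lambda)=\{(i,i+\lambda_i):1\le i\le n\}$, and your observation that $\lambda_k-2n+2k-1=m+1$ for all $k$, together with $|\lambda|=mn+n^2+n$, correctly turns the theorem into the single identity $S(t)=n/(t-1)$ with $t=m+2n+2$ (this is exactly the paper's \eqref{eqn:mainidentity} in different clothing; I checked it for $n=1,2,3$). Where you and the paper part ways is the evaluation, and this is where your proposal has a genuine gap. Two of your three suggested routes would not go through as described: the sum is indeed a terminating very-well-poised ${}_7F_6$-type series, but it is \emph{not} $2$-balanced, so Dougall's theorem does not apply --- the paper instead has to combine Watson's ${}_8\phi_7\to{}_4\phi_3$ transformation with a quadratic-base ${}_4\phi_3$ summation (\cite[Exercise 3.34]{GR}) to manufacture a new ${}_8\phi_7$ summation \eqref{eq:new} and then let $q\to1$. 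Likewise the Lagrange identity \eqref{eq:li} does not directly fit, because in $S(t)$ the numerator product $\prod_{i\ne k}(2i-2k+1)$ omits the index $i=k$, so the would-be interpolation nodes ``$c_j$'' depend on the outer summation index, which \eqref{eq:li} does not allow.

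Your third route --- the partial-fraction/residue argument in the variable $t$ --- is genuinely different from the paper's and is viable and elementary: each summand is $O(1/t)$, the apparent pole at $t=2k-1$ for $k\ge2$ is killed by the numerator factor $t-i-k$ at $i=k-1$ exactly as you say, and the residue of the $k=1$ term at $t=1$ telescopes to $\prod_{i=2}^n i/(i-1)=n$. But the substantive step is the one you only assert: that $S(t)$ has no poles at the half-integers $t=\sigma-\tfrac12$ coming from the factors $2t-2i-2k+1$. For a given $\sigma$ this pole occurs in \emph{every} summand $k$ with $1\le\sigma-k\le n$ and $\sigma\ne 2k$ (for $n\ge4$ that can be four or more summands, not two), so one must prove that the residues cancel --- pairwise between summands $k$ and $\sigma-k$, as you conjecture (this does hold in the small cases I checked), or at least in total. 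That cancellation is a nontrivial product identity of essentially the same flavour as the one being proved, and until it is established the argument is incomplete. So: correct strategy and correct target identity, a legitimately different and more elementary evaluation scheme than the paper's, but with the key lemma (half-integer residue cancellation) left unproven.
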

\begin{proof}
Note that $\NE(\lambda)=\{ (i+1,  \lambda_{i+1} +1) ~|~ 0\le i\le n-1\}$, where $\lambda_{i+1}=m+2n-2i$, for $0\le i\le n-1$.
By applying 
\begin{multline}\label{eqn:lambda_i}
\frac{g^{\lambda\cup \{(i+1, \lambda_{i+1} +1)\}}}{g^\lambda (|\lambda|+1)}\\
=  2  \frac{(2i)!}{(i!)^2} \frac{(2n-2i-1)!}{((n-i-1)!)^2}\frac{((m+2n-i)!)^2}{((m+n-i)!)^2}
\frac{(2m+2n-2i+1)!}{(2m+4n-2i+1)!}\frac{(2m+4n-4i+1)}{(m+2n-2i)(m+2n-2i+1)},
\end{multline}
for $0\le i\le n-1$, in equation \eqref{eq:g+1}, 
 the identity that we need to prove is
\begin{multline}\label{eqn:mainidentity}
\frac{n}{2(m+2n+1)}=\\
\sum_{i=0}^{n-1}\frac{(2i)!}{(i!)^2} \frac{(2n-2i-1)!}{((n-i-1)!)^2}\frac{((m+2n-i)!)^2}{((m+n-i)!)^2}
\frac{(2m+2n-2i+1)!}{(2m+4n-2i+1)!}\frac{(2m+4n-4i+1)}{(m+2n-2i)(m+2n-2i+1)}.
\end{multline}
Note that the right hand side of \eqref{eqn:mainidentity} is equal to
\begin{align}\label{eqn:7F6}
&\frac{(2n-1)!((m+2n)!)^2 (2m+2n+1)!}{((n-1)!)^2((m+n)!)^2 (2m+4n)!(m+2n)(m+2n+1)}\\\notag
&\times \sum_{i=0}^{n-1}\frac{\left( -m-2n-\frac{1}{2}, -\frac{m}{2}-n+\frac{3}{4}, \frac{1}{2}, -m-n, -\frac{m}{2}-n-\frac{1}{2}, -\frac{m}{2}-n, 1-n \right)_i}
{\left(1, -\frac{m}{2}-n-\frac{1}{4}, -m-2n, \frac{1}{2}-n, -\frac{m}{2}-n+1, -\frac{m}{2}-n+\frac{1}{2} , -m-n-\frac{1}{2} \right)_i}.
\end{align}
To compute the hypergeometric sum in \eqref{eqn:7F6}, we observe that it can
also be obtained from the $q\rightarrow 1$ limit of the
$(a,b,c,d,e, n)\mapsto (q^{-m-2n-\frac{1}{2}}, q^{\frac{1}{2}}, q^{-m-n}, q^{-\frac{m}{2}-n-\frac{1}{2}}, q^{-\frac{m}{2}-n}, n-1) $
special case of the basic hypergeometric sum
\begin{equation}\label{eqn:Watson0}
{}_8\phi_7\!\left[\begin{matrix}a,\,qa^{\frac 12},-qa^{\frac 12},b,c,d,e,q^{-n}\\
a^{\frac 12},-a^{\frac 12},aq/b,aq/c,aq/d,aq/e,aq^{n+1}\end{matrix};
q,\frac{a^2q^{n+2}}{bcde}\right].
\end{equation}
Now we analyze the ${}_8\phi_7$ series in \eqref{eqn:Watson0}.
First we use the well-known Watson transformation (cf.\ \cite[Appendix (III.18)]{GR}):
\begin{align}\label{eq:watson}
&{}_8\phi_7\!\left[\begin{matrix}a,\,qa^{\frac 12},-qa^{\frac 12},b,c,d,e,q^{-n}\\
a^{\frac 12},-a^{\frac 12},aq/b,aq/c,aq/d,aq/e,aq^{n+1}\end{matrix};
q,\frac{a^2q^{n+2}}{bcde}\right]\notag\\
&=\frac{(aq,aq/de;q)_n}{(aq/d,aq/e;q)_n}
{}_4\phi_3\!\left[\begin{matrix}aq/bc,d,e,q^{-n}\\
aq/b,aq/c,deq^{-n}/a\end{matrix};q,q\right].
\end{align}
 We also use the following summation (cf.\ \cite[Exercise 3.34]{GR})
\begin{equation}\label{eq:43}
{}_4\phi_3\!\left[\begin{matrix}q^{-2n},c^2,a,aq\\
a^2q^2,cq^{-n},cq^{1-n}\end{matrix};q^2,q^2\right]
=\frac{(-q,qa/c;q)_n}{(-aq,q/c;q)_n}.
\end{equation}

If we take \eqref{eq:watson}, replace $q$ by $q^2$, and specialize
$b=a/d^2$, $c=a^2q^{2+2n}/d^2$, the ${}_4\phi_3$ series on
the right-hand side can be simplified by the $(a,c)\mapsto (d,d^2q^{-n/a})$
case of \eqref{eq:43}. As a consequence, we obtain the following summation
(which is of interest by itself):
\begin{align}\label{eq:new}
&{}_8\phi_7\!\left[\begin{matrix}a,\,q^2a^{\frac 12},-q^2a^{\frac 12},
a/d^2,a^2q^{2+2n}/d^2,d,dq,q^{-2n}\\
a^{\frac 12},-a^{\frac 12},d^2q^2,d^2q^{-2n}/a,aq^2/d,aq/d,aq^{2n+2}\end{matrix};q^2,\frac{d^2q}a\right]\notag\\
&=\frac{(-q,aq/d^2;q)_n(aq^2;q^2)_n}{(-dq,aq/d;q)_n(aq^2/d^2;q^2)_n}.
\end{align}
We now apply the substitution $(a,d,n)\mapsto(q^{-1-2m-4n},q^{-1-m-2n},n-1)$ to \eqref{eq:new} and take the limit $q\rightarrow 1$ to obtain
\begin{align*}
&\sum_{i=0}^{n-1}\frac{\left( -m-2n-\frac{1}{2}, -\frac{m}{2}-n+\frac{3}{4}, \frac{1}{2}, -m-n, -\frac{m}{2}-n-\frac{1}{2}, -\frac{m}{2}-n, 1-n \right)_i}
{\left(1, -\frac{m}{2}-n-\frac{1}{4}, -m-2n, \frac{1}{2}-n, -\frac{m}{2}-n+1, -\frac{m}{2}-n+\frac{1}{2} , -m-n-\frac{1}{2} \right)_i}\\
&= \frac{(n-1)!n!((m+n)!)^2(2m+4n-1)!}{(2n-1)!((m+2n-1)!)^2 (2m+2n+1)!}.
\end{align*}
Using this result in \eqref{eqn:7F6} and simplifying the expression
proves the identity \eqref{eqn:mainidentity}.
\end{proof}

%----------------------------------------------------------------------------------------------------------

\section{Down degrees for shifted Young diagrams}
\label{sec:sum-down-degrees}

%----------------------------------------------------------------------------------------------------------

In this section we compute the expectation $\EE(X)$ for the interval
$[\emptyset,\lambda]$ when $\lambda$ is a trapezoidal shifted Young diagram. This
completes our proof of Theorem~\ref{thm:trapezoidal}. We first give a general
way to express $\EE(X)$ in terms of the number of shifted Young diagrams
contained in a given shifted Young diagram.

Let $\lambda$ be a shifted Young diagram. We denote by $R(\lambda)$ the number
of shifted Young diagrams $\mu\subseteq\lambda$. Define $R^{(+1)}(\lambda)$ to
be the sum of $\ddeg(\mu)$ for all shifted Young diagrams $\mu\subseteq\lambda$.
Equivalently, $R^{(+1)}(\lambda)$ is the number of pairs $(\mu,(i,j))$ of
$\mu\subseteq\lambda$ and an inner corner $(i,j)$ of $\mu$. By definition, the
expectation $\EE(X)$ for the interval $[\emptyset,\lambda]$ is given by
\[
\EE(X) = \frac{R^{(+1)}(\lambda)}{R(\lambda)}.
\]

The \emph{border} $\BB(\lambda)$ of $\lambda$ is the set of cells $(i,j)$ in
$\lambda$ such that $(i+1,j+1)$ is not in $\lambda$. See Figure~\ref{fig:border}
for an example.

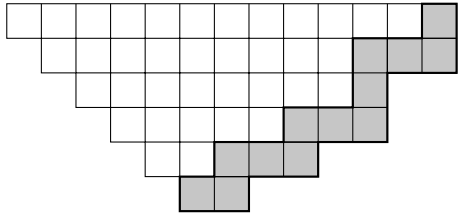
\begin{figure}[ht]
$$
\begin{tikzpicture}[scale=.46]
\draw[fill=gray!45] (5,0) rectangle (7,1);
\draw[fill=gray!45] (6,1) rectangle (9,2);
\draw[fill=gray!45] (8,2) rectangle (11,3);
\draw[fill=gray!45] (10,3) rectangle (11,5);
\draw[fill=gray!45] (11,4) rectangle (13,5);
\draw[fill=gray!45] (12,5) rectangle (13,6);
\draw (0,6)--(0,5)--(1,5)--(1,4)--(2,4)--(2,3)--(3,3)--(3,2)--(4,2)--(4,1)--(5,1)--(5,0)--(7,0)--(7,1)--(9,1)--(9,2)--(11,2)--(11,4)--(13,4)--(13,6)--(0,6)--cycle;
\draw (1,5)--(13,5)
	(2,4)--(11,4)
	(3,3)--(11,3)
	(4,2)--(9,2)
	(5,1)--(7,1)
	(1,5)--(1,6)
	(2,4)--(2,6)
	(3,3)--(3,6)
	(4,2)--(4,6)
	(5,1)--(5,6)
	(6,0)--(6,6)
	(7,1)--(7,6)
	(8,1)--(8,6)
	(9,2)--(9,6)
	(10,2)--(10,6)
	(11,4)--(11,6)
	(12,4)--(12,6);
\draw[thick] (5,0)--(7,0)--(7,1)--(9,1)--(9,2)--(11,2)--(11,4)--(13,4)--(13,6)--(12,6)--(12,5)--(10,5)--(10,3)--(8,3)--(8,2)--(6,2)--(6,1)--(5,1)--(5,0)--cycle;
\end{tikzpicture}
$$
  \caption{The border $\BB(\lambda)$ for a shifted Young diagram $\lambda$.}
  \label{fig:border}
\end{figure}

Suppose that
$\ell(\lambda)= n$. For $x=(i,j)\in\BB(\lambda)$, define
\[
\lambda(x) =
\begin{cases}
 (\lambda_1-1, \dots, \lambda_{n-1}-1), & \mbox{if $(i,j)=(n,n)$,}\\
 (\lambda_1-2, \dots, \lambda_{i-1}-2, \widetilde{\lambda}_{i+1},\dots,
 \tilde{\lambda}_{n}),& \mbox{otherwise,}
\end{cases}
\]
where
\[
\widetilde{\lambda}_t=
  \begin{cases}
    \lambda_t-1, & \mbox{if $\lambda_t+t-1=j$,}\\
    \lambda_t, & \mbox{otherwise.}
  \end{cases}
\]
Pictorially, if $x\in\BB(\lambda)\setminus\{(n,n)\}$, then
$\lambda(x)$ is obtained from $\lambda$ by removing the shaded region
and attaching the remaining two connected regions as shown in Figure~\ref{fig:la(x)}.

\begin{figure}[ht]
$$
\begin{tikzpicture}[scale=.46]
\draw[fill=gray!40] (0,5) rectangle (5,6);
\draw[fill=gray!40] (5,5)--(8,2)--(9,2)--(9,3)--(6,6)--(5,6)--(5,5);
\draw[fill=gray!40] (8,1) rectangle (9,2);
\draw[fill=gray!40] (9,2) rectangle (11,3);
\draw[very thick] (8,2)--(9,2)--(9,3)--(8,3)--(8,2)--cycle;
\draw[dashed, very thick] (1,5)--(5,5)--(8,2)--(8,1)
					(6,6)--(9,3)--(11,3);
\draw (0,6)--(0,5)--(1,5)--(1,4)--(2,4)--(2,3)--(3,3)--(3,2)--(4,2)--(4,1)--(5,1)--(5,0)--(7,0)--(7,1)--(9,1)--(9,2)--(11,2)--(11,4)--(13,4)--(13,6)--(0,6)--cycle;
\draw (1,5)--(13,5)
	(2,4)--(11,4)
	(3,3)--(11,3)
	(4,2)--(9,2)
	(5,1)--(7,1)
	(1,5)--(1,6)
	(2,4)--(2,6)
	(3,3)--(3,6)
	(4,2)--(4,6)
	(5,1)--(5,6)
	(6,0)--(6,6)
	(7,1)--(7,6)
	(8,1)--(8,6)
	(9,2)--(9,6)
	(10,2)--(10,6)
	(11,4)--(11,6)
	(12,4)--(12,6);
 \node[] at (8.5,2.5) {$x$};
 \node[] at (4.5, 2.5) {$A$};
  \node[] at (10.5, 4.5) {$B$};
  \node[] at (1,1.5) {$\lambda$};
 \end{tikzpicture}\qquad\qquad
 \begin{tikzpicture}[scale=.47]
\draw (0,5)--(0,4)--(1,4)--(1,3)--(2,3)--(2,2)--(3,2)--(3,1)--(4,1)--(4,0)--(6,0)--(6,1)--(7,1)--(7,2)--(9,2)--(9,3)--(11,3)--(11,5)--(0,5)--cycle; 
\draw (1,4)--(11,4)
	(2,3)--(9,3)
	(3,2)--(7,2)
	(4,1)--(6,1)
	(1,4)--(1,5)
	(2,3)--(2,5)
	(3,2)--(3,5)
	(4,1)--(4,5)
	(5,0)--(5,5)
	(6,1)--(6,5)
	(7,2)--(7,5)
	(8,2)--(8,5)
	(9,3)--(9,5)
	(10,3)--(10,5);
  \draw[dashed, very thick] (0,5)--(4,5)--(7,2)
  				(7,1)--(7,2)--(9,2);
  \node[] at (3.5, 2.5) {$A$};
  \node[] at (8.5, 3.5) {$B$};
   \node[] at (1,1) {$\lambda(x)$};
  \end{tikzpicture}
$$
  \caption{The shifted Young diagram $\lambda$ and $x\in \BB(\lambda)$ on the
    left and the diagram $\lambda(x)$ on the right.}
  \label{fig:la(x)}
\end{figure}
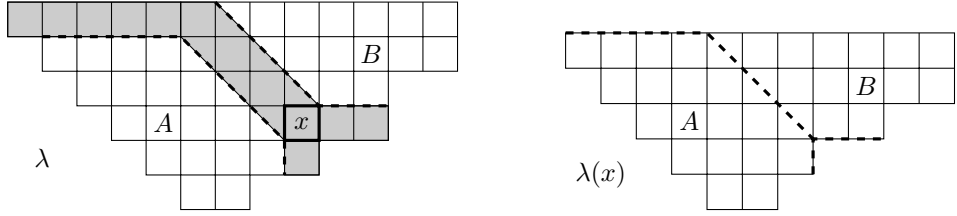

The following proposition allows us to write $R^{(+1)}(\lambda)$ as a sum of
$R(\mu)$'s.

\begin{prop}\label{prop:R+1}
  Let $\lambda$ be a shifted Young diagram. Then
\[
R^{(+1)}(\lambda) = \sum_{x\in \BB(\lambda)} R(\lambda(x)).
\]
\end{prop}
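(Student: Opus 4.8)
The plan is to rewrite both sides as cardinalities of explicit sets of diagrams and then to exhibit a bijection between them. First I would reformulate the left-hand side: combining the definition of $R^{(+1)}$ with the remark made just before the statement, $R^{(+1)}(\lambda)$ is the number of pairs $(\mu',c)$ where $\mu'\subseteq\lambda$ is a shifted Young diagram and $c=(r,s)$ is an inner corner of $\mu'$ with $\mu'\cup\{c\}\subseteq\lambda$ (such a pair records the cover relation $\mu'\lessdot\mu'\cup\{c\}$ in $[\emptyset,\lambda]$, the removable corner of $\mu'\cup\{c\}$ being $c$). It therefore suffices to construct a bijection
\[
\Phi\colon\ \{(\mu',c)\ \text{as above}\}\ \longrightarrow\ \bigsqcup_{x\in\BB(\lambda)}\{\rho\subseteq\lambda(x)\},
\]
where in the target $\rho$ ranges over shifted Young diagrams contained in $\lambda(x)$.

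To define $\Phi(\mu',c)$ I would first produce the border cell. Since $\mu'\cup\{c\}\subseteq\lambda$ we have $c\in\lambda$, so we may let $x=(i,j)$ be the last cell of $\lambda$ on the diagonal ray $(r,s),(r+1,s+1),(r+2,s+2),\dots$; as $\lambda$ is a shifted shape this ray eventually exits $\lambda$, so $x$ is well defined and $(i+1,j+1)\notin\lambda$, i.e.\ $x\in\BB(\lambda)$, and $x$ has content $d:=j-i=s-r$. The condition that $c$ be an inner corner of $\mu'$ forces $\mu'_r=d$, $\mu'_{r-1}\ge d+2$ and $\mu'_{r+1}\le d-1$ (with the convention $\mu'_0=\infty$). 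Now cut $\mu'$ at its $r$th row: set $\rho_t=\mu'_t-2$ for $1\le t\le r-1$ and $\rho_{t-1}=\mu'_t$ for $t\ge r+1$, so that the rows of $\mu'$ above the corner (shifted) make up the top of $\rho$ and the rows below it make up the bottom. In the degenerate case $x=(n,n)$ one has $d=0$ and $\mu'_r=0$, there is no bottom part, and one uses $\rho_t=\mu'_t-1$ instead, matching the first branch of the definition of $\lambda(x)$. The inverse is recovered from the content: given $x=(i,j)\in\BB(\lambda)$, $d=j-i$ and $\rho\subseteq\lambda(x)$, the fact that $x\in\BB(\lambda)$ forces every part of $\lambda(x)$ lying in a row $\ge i$ to be $\le d-1$, so $k:=\#\{t:\rho_t\ge d\}$ satisfies $k\le i-1$; then $r=k+1$, $c=(r,r+d)$, and $\mu'_t=\rho_t+2$ for $t<r$, $\mu'_r=d$, $\mu'_t=\rho_{t-1}$ for $t>r$ reconstruct the pair (with the obvious modification when $d=0$).

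What remains, and where I expect essentially all of the work to lie, is the verification that $\Phi$ is a well-defined bijection onto the disjoint union. For well-definedness one must check that the $\rho$ produced above is a shifted Young diagram with $\rho\subseteq\lambda(x)$: the strictness of $\rho$ at the seam follows from $\mu'_{r-1}\ge d+2$ and $\mu'_{r+1}\le d-1$, while $\rho\subseteq\lambda(x)$ is a finite case check comparing the top part of $\mu'$ (where the bound is $\mu'_t\le\lambda_t$) and the bottom part (where the strict decrease $\mu'_r>\mu'_{r+1}>\cdots$ is what limits the sizes) against the two branches of the definition of $\lambda(x)$, using the inequalities $\lambda_i\ge d+1$ and $\lambda_{i+1}\le d$ that hold because $x\in\BB(\lambda)$. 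One then checks that the inverse construction lands in $\{(\mu',c)\}$ (once more using $\lambda_i\ge d+1$, $\lambda_{i+1}\le d$ and strictness of $\rho$), that applying the first step of $\Phi$ to the reconstructed $c$ returns the chosen $x$, and that every $x\in\BB(\lambda)$ actually occurs — for the last point one may take $c=x$ and $\mu'=(\lambda_1,\dots,\lambda_{i-1},\,j-i)$. Geometrically this bijection is precisely the picture in Figure~\ref{fig:la(x)}: deleting the shaded strip and regluing the two remaining regions $A$ and $B$ contracts the diagonal ray from $c$ to $x$, and a shifted sub-diagram of $\lambda(x)$ is exactly the data of how far $\mu'$ reaches above its corner row, how far it reaches below, and which row is the corner row. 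Summing the bijection over $x\in\BB(\lambda)$ then yields $R^{(+1)}(\lambda)=\sum_{x\in\BB(\lambda)}R(\lambda(x))$.
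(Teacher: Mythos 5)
Your argument is correct and is essentially the paper's own proof: you encode each cover relation in $[\emptyset,\lambda]$ by the pair (lower element, addable corner) where the paper uses (upper element, removable corner), but the border cell $x$ obtained by following the diagonal ray from $c$ and the diagram $\rho$ obtained by deleting the corner's row and shifting the rows above by $2$ (by $1$ when $x=(n,n)$) coincide exactly with the paper's $\nu\in R(\lambda(x))$. The paper likewise leaves the well-definedness and inverse checks to the reader ("it is not hard to see"), so your sketch of those verifications is, if anything, more detailed than the original.
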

\begin{proof}
  By definition $R^{(+1)}(\lambda)$ is the number of pairs $(\mu,(i,j))$ of a
  shifted Young diagram $\mu\subseteq \lambda$ and an inner corner $c=(i,j)$ of $\mu$.
  Let $x$ be the cell such that $x=(i+t,j+t)\in \lambda$ for some $t\ge0$ with
  $(i+t+1,j+t+1)\not\in \lambda$. Then $x\in\BB(\lambda)$.
Define
  \[
    \nu=
    \begin{cases}
      (\mu_1-1,\mu_2-1,\dots,\mu_{i-1}-1), &
      \mbox{if $x=(n,n)$,}\\
      (\mu_1-2,\mu_2-2,\dots,\mu_{i-1}-2, \mu_{i+1},\mu_{i+2},\dots,\mu_k), &
      \mbox{if $x\ne(n,n)$,}\\
    \end{cases}
  \]
  where $k=\ell(\mu)$. See Figure~\ref{fig:nu1} and Figure~\ref{fig:nu2} for an
  example of $\nu$ for the cases $x\ne(n,n)$ and $x=(n,n)$, respectively. By the
  construction we have $\nu\in R(\lambda(x))$. It is not hard to see that the
  map $(\mu,(i,j))\mapsto (x,\nu)$ is a bijection from $R^{(+1)}(\lambda)$ to
  $\cup_{x\in\BB(\lambda)} R(\lambda(x))$. This proves the desired identity.
\end{proof}

\begin{figure}[ht]
$$
\begin{tikzpicture}[scale=.46]
\draw[fill=gray!30] (0,5) rectangle (5,6);
\draw[fill=gray!30] (5,5)--(8,2)--(9,2)--(9,3)--(6,6)--(5,6)--(5,5);
\draw[fill=gray!30] (8,1) rectangle (9,2);
\draw[fill=gray!30] (9,2) rectangle (11,3);
\draw[very thick] (8,2)--(9,2)--(9,3)--(8,3)--(8,2)--cycle
                            (6,4)--(7,4)--(7,5)--(6,5)--(6,4)--cycle;
\draw[dashed, very thick] (1,5)--(5,5)--(8,2)--(8,1)
					(6,6)--(9,3)--(11,3);
\draw (0,6)--(0,5)--(1,5)--(1,4)--(2,4)--(2,3)--(3,3)--(3,2)--(4,2)--(4,1)--(5,1)--(5,0)--(7,0)--(7,1)--(9,1)--(9,2)--(11,2)--(11,4)--(13,4)--(13,6)--(0,6)--cycle;
\draw (1,5)--(13,5)
	(2,4)--(11,4)
	(3,3)--(11,3)
	(4,2)--(9,2)
	(5,1)--(7,1)
	(1,5)--(1,6)
	(2,4)--(2,6)
	(3,3)--(3,6)
	(4,2)--(4,6)
	(5,1)--(5,6)
	(6,0)--(6,6)
	(7,1)--(7,6)
	(8,1)--(8,6)
	(9,2)--(9,6)
	(10,2)--(10,6)
	(11,4)--(11,6)
	(12,4)--(12,6);
\draw[very thick, color=blue] (0,6)--(0,5)--(1,5)--(1,4)--(2,4)--(2,3)--(3,3)--(3,2)--(4,2)--(4,1)--(5,1)--(5,3)--(6,3)--(6,4)--(7,4)--(7,5)--(10,5)--(10,6)--(0,6)--cycle;
\node[] at (1.5, 2) {\color{blue}$\mu$};
\node[] at (12.3,2.5) {$\lambda$};
 \node[] at (8.5,2.5) {$x$};
  \node[] at (6.5,4.5) {$c$};
 \end{tikzpicture}\qquad\qquad
 \begin{tikzpicture}[scale=.47]
\draw (0,5)--(0,4)--(1,4)--(1,3)--(2,3)--(2,2)--(3,2)--(3,1)--(4,1)--(4,0)--(6,0)--(6,1)--(7,1)--(7,2)--(9,2)--(9,3)--(11,3)--(11,5)--(0,5)--cycle; 
\draw (1,4)--(11,4)
	(2,3)--(9,3)
	(3,2)--(7,2)
	(4,1)--(6,1)
	(1,4)--(1,5)
	(2,3)--(2,5)
	(3,2)--(3,5)
	(4,1)--(4,5)
	(5,0)--(5,5)
	(6,1)--(6,5)
	(7,2)--(7,5)
	(8,2)--(8,5)
	(9,3)--(9,5)
	(10,3)--(10,5);
  \draw[dashed, very thick] (4,5)--(7,2)
  				(7,1)--(7,2)--(9,2);
 \draw[very thick, color=blue] (0,5)--(0,4)--(1,4)--(1,3)--(2,3)--(2,2)--(3,2)--(3,1)--(4,1)--(4,3)--(5,3)--(5,4)--(8,4)--(8,5)--(0,5)--cycle;
\node[] at (1, 1.5) {\color{blue}$\nu$};
\node[] at (10, 1) {$\lambda(x)$};
  \end{tikzpicture}
$$
  \caption{The diagram $\nu$ for $x\ne(n,n)$.}
  \label{fig:nu1}
\end{figure}

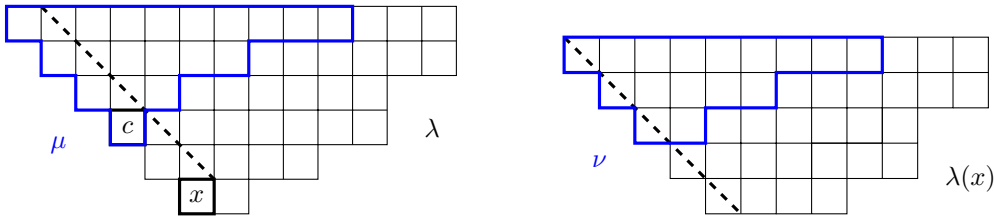
\begin{figure}[ht]
$$
\begin{tikzpicture}[scale=.46]
\draw[very thick] (5,0)--(6,0)--(6,1)--(5,1)--(5,0)--cycle
                            (3,2)--(4,2)--(4,3)--(3,3)--(3,2)--cycle;
\draw[dashed, very thick] (1,6)--(6,1);
\draw (0,6)--(0,5)--(1,5)--(1,4)--(2,4)--(2,3)--(3,3)--(3,2)--(4,2)--(4,1)--(5,1)--(5,0)--(7,0)--(7,1)--(9,1)--(9,2)--(11,2)--(11,4)--(13,4)--(13,6)--(0,6)--cycle;
\draw (1,5)--(13,5)
	(2,4)--(11,4)
	(3,3)--(11,3)
	(4,2)--(9,2)
	(5,1)--(7,1)
	(1,5)--(1,6)
	(2,4)--(2,6)
	(3,3)--(3,6)
	(4,2)--(4,6)
	(5,1)--(5,6)
	(6,0)--(6,6)
	(7,1)--(7,6)
	(8,1)--(8,6)
	(9,2)--(9,6)
	(10,2)--(10,6)
	(11,4)--(11,6)
	(12,4)--(12,6);
\draw[very thick, color=blue] (0,6)--(0,5)--(1,5)--(1,4)--(2,4)--(2,3)--(3,3)--(3,2)--(4,2)--(4,3)--(5,3)--(5,4)--(7,4)--(7,5)--(10,5)--(10,6)--(0,6)--cycle;
\node[] at (1.5, 2) {\color{blue}$\mu$};
\node[] at (12.3,2.5) {$\lambda$};
 \node[] at (5.5,.5) {$x$};
  \node[] at (3.5,2.5) {$c$};
 \end{tikzpicture}\qquad\qquad
 \begin{tikzpicture}[scale=.47]
\draw (0,5)--(0,4)--(1,4)--(1,3)--(2,3)--(2,2)--(3,2)--(3,1)--(4,1)--(4,0)--(6,0)--(6,1)--(7,1)--(7,2)--(9,2)--(9,3)--(11,3)--(11,5)--(0,5)--cycle; 
\draw (1,4)--(11,4)
	(2,3)--(9,3)
	(3,2)--(10,2)
	(4,1)--(6,1)
	(1,4)--(1,5)
	(2,3)--(2,5)
	(3,2)--(3,5)
	(4,1)--(4,5)
	(5,0)--(5,5)
	(6,1)--(6,5)
	(7,0)--(7,5)
	(8,1)--(8,5)
	(9,1)--(9,5)
	(10,3)--(10,5)
	(11,5)--(12,5)--(12,3)--(11,3)
	(11,4)--(12,4)
	(10,3)--(10,1)--(7,1)
	(6,0)--(8,0)--(8,1);
  \draw[dashed, very thick] (0,5)--(5,0);
 \draw[very thick, color=blue] (0,5)--(0,4)--(1,4)--(1,3)--(2,3)--(2,2)--(4,2)--(4,3)--(6,3)--(6,4)--(9,4)--(9,5)--(0,5)--cycle;
\node[] at (1, 1.5) {\color{blue}$\nu$};
\node[] at (11.5, 1) {$\lambda(x)$};
  \end{tikzpicture}
$$
  \caption{The diagram $\nu$ for $x=(n,n)$.}
  \label{fig:nu2}
\end{figure}

In the next two lemmas we find simple formulas for $R(\lambda)$ and
$R^{(+1)}(\lambda)$ for a trapezoidal shifted Young diagram $\lambda$.

\begin{lem}\label{lem:R(la)}
  Let $\lambda=(N,N-2,\dots,N-2n+2)$. Then
\[
R(\lambda) = \binom{N+1}{n}.
\]
\end{lem}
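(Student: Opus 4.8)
The plan is to interpret the shifted subdiagrams $\mu\subseteq\lambda$ combinatorially and reduce the count to a reflection-principle computation for $\pm1$ lattice paths.

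First I would record the obvious dictionary. For the trapezoidal shape $\lambda=(N,N-2,\dots,N-2n+2)$, a shifted Young diagram $\mu$ satisfies $\mu\subseteq\lambda$ if and only if $\mu$ is a strict partition with $\ell(\mu)\le n$ and $\mu_i\le N+2-2i$ for all $i$ (containment of shifted diagrams is row-by-row, since both diagrams shift row $i$ by $i-1$; in particular $\mu_1\le N$). Identifying such a $\mu$ with its set of parts $S=\{s_1>s_2>\dots>s_k\}\subseteq\{1,\dots,N\}$ and then with the binary word $w_1w_2\cdots w_N$ defined by $w_j=1\iff N+1-j\in S$, one checks that the $i$-th occurrence of $1$ in $w$ (reading left to right) sits in position $N+1-s_i$, and that in the prefix $w_1\cdots w_{N+1-s_i}$ one has $(\#\,1\text{'s})-(\#\,0\text{'s})=2i+s_i-N-1$. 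Hence the inequality $s_i\le N+2-2i$ is \emph{exactly} the statement that this prefix has $(\#\,1\text{'s})-(\#\,0\text{'s})\le 1$; and since $(\#\,1\text{'s})-(\#\,0\text{'s})$ increases only at letters equal to $1$, imposing this for every $i$ is equivalent to imposing $(\#\,1\text{'s})-(\#\,0\text{'s})\le 1$ for \emph{every} prefix of $w$. The condition $\ell(\mu)\le n$ simply becomes ``$w$ has at most $n$ ones''. Thus $R(\lambda)$ equals the number of words $w\in\{0,1\}^N$ with at most $n$ ones all of whose prefixes satisfy $(\#\,1\text{'s})-(\#\,0\text{'s})\le 1$.

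Next I would read $w$ as a lattice path: put $D_0=0$ and let $D_j-D_{j-1}$ be $+1$ if $w_j=1$ and $-1$ if $w_j=0$, so that $R(\lambda)$ counts paths $(D_0,\dots,D_N)$ of $\pm1$ steps starting at height $0$, staying weakly below height $1$, and using at most $n$ up-steps. For a fixed number $u$ of up-steps, the André reflection principle — reflecting the initial segment about the line $y=2$ at the first moment the path reaches height $2$, which carries the starting point $0$ to $4$ — shows that the number of such length-$N$ paths that stay $\le 1$ is $\binom{N}{u}-\binom{N}{u-2}$, with the convention $\binom{N}{j}=0$ for $j<0$. Summing over $0\le u\le n$ and telescoping, then applying Pascal's rule,
\[
R(\lambda)=\sum_{u=0}^{n}\left(\binom{N}{u}-\binom{N}{u-2}\right)=\binom{N}{n}+\binom{N}{n-1}=\binom{N+1}{n},
\]
which is the claim.

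The two substantive ingredients, the word encoding and the reflection count, are both standard, so the computation is short. The step that genuinely requires care — and the only place I would expect an error to creep in — is pinning down the encoding precisely: the reading direction of $w$, exactly which prefix tests the constraint on $s_i$, and the off-by-one in the bound (it is ``$\le 1$'', not ``$\le 0$''). Getting these right is what makes the telescoping collapse to $\binom{N}{n}+\binom{N}{n-1}$ rather than to a shifted sum.
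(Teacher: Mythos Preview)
Your proof is correct and follows essentially the same strategy as the paper: encode the subdiagrams $\mu\subseteq\lambda$ as constrained lattice paths, count for each fixed ``length'' parameter by the reflection principle, and telescope. The only cosmetic difference is the encoding: the paper traces the southeast boundary of $\mu$ as a north/east path from $(j,-j)$ to $(N+1,0)$ (with $j=\ell(\mu)$) staying weakly above $y=x-N-1$, which yields the terms $\binom{N+1}{j}-\binom{N+1}{j-1}$ and telescopes directly to $\binom{N+1}{n}$; your binary-word encoding gives length-$N$ Dyck-type paths with $u$ up-steps and terms $\binom{N}{u}-\binom{N}{u-2}$, requiring one application of Pascal's rule at the end.
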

\begin{proof}
  Let us embed the shifted Young diagram of $\lambda$ in $\mathbb{Z}^2$ so that
  each cell is a unit square and the top left corner of $\lambda$ is at $(0,0)$
  as shown in Figure~\ref{fig:embed}.

\begin{figure}[ht]
$$
\begin{tikzpicture}[scale=.48]
\draw (0,3)--(0,2)--(1,2)--(1,1)--(2,1)--(2,0)--(5,0)--(5,1)--(6,1)--(6,2)--(7,2)--(7,3)--(0,3)--cycle
	(1,2)--(6,2)
	(2,1)--(5,1)
	(1,2)--(1,3)
	(2,1)--(2,3)
	(3,0)--(3,3)
	(4,0)--(4,3)
	(5,1)--(5,3)
	(6,2)--(6,3);
\draw[dashed] (4,-1)--(8.5,3.5)
				(7,3)--(8,3);
\filldraw [black] (0,3) circle (3.5pt)
			(3,0) circle (3.5pt)
			(8,3) circle (3.5pt);
\node[] at (0, 3.6) {\small $(0,0)$};
\node[] at (9.5, 2.8) {\small (N+1,0)};
\node[] at (3,-.6) {\small $(n,-n)$};
\node[] at (-1.3, 1.3) {$\lambda=$};
\node[] at (8, .4) {\small $y=x-N-1$};
\end{tikzpicture}
$$
  \caption{Embedding $\lambda$ in $\ZZ^2$.}
  \label{fig:embed}
\end{figure}

Each shifted Young diagram $\mu\subseteq\lambda$ can be identified with a
lattice path from $(j,-j)$ to $(N+1,0)$ for some $0\le j\le n$ consisting of
north and east steps that never goes below the line $y=x-N-1$. For example, if
$N=7, n=3$ so that $\lambda=(7,5,3)$, then the shifted shape $\mu=(7,3)\subseteq
\lambda$ is identified with the path shown in Figure~\ref{fig:lattice}.

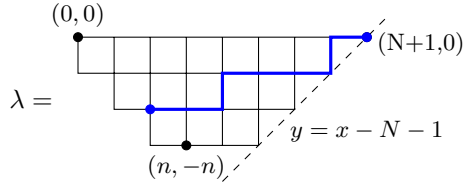
\begin{figure}[ht]
$$
\begin{tikzpicture}[scale=.48]
\draw (0,3)--(0,2)--(1,2)--(1,1)--(2,1)--(2,0)--(5,0)--(5,1)--(6,1)--(6,2)--(7,2)--(7,3)--(0,3)--cycle
	(1,2)--(6,2)
	(2,1)--(5,1)
	(1,2)--(1,3)
	(2,1)--(2,3)
	(3,0)--(3,3)
	(4,0)--(4,3)
	(5,1)--(5,3)
	(6,2)--(6,3);
\draw[dashed] (4,-1)--(8.5,3.5)
				(7,3)--(8,3);
\filldraw [black] (0,3) circle (3.5pt)
			(3,0) circle (3.5pt)
			(8,3) circle (3.5pt);
\node[] at (0, 3.6) {\small $(0,0)$};
\node[] at (9.5, 2.8) {\small (N+1,0)};
\node[] at (3,-.6) {\small $(n,-n)$};
\node[] at (-1.3, 1.3) {$\lambda=$};
\node[] at (8, .4) {\small $y=x-N-1$};
 \draw[very thick, color=blue] (2,1)--(4,1)--(4,2)--(7,2)--(7,3)--(8,3);
\filldraw [blue] (8,3) circle (3.5pt)
                       (2,1) circle (3.5pt);
\end{tikzpicture}
$$
  \caption{The lattice path representing $\mu=(7,3)$.}
  \label{fig:lattice}
\end{figure}

By the standard reflection method, one can see that, for a fixed $0\le j\le n$,
the number of such paths equals $\binom{N+1}{j}-\binom{N+1}{j-1}$. Therefore the
total number of shifted Young diagrams $\mu$ contained in $\lambda$ is
\[
\sum_{j=0}^n \left(  \binom{N+1}{j}-\binom{N+1}{j-1}\right) = \binom{N+1}{n},
\]
as the sum telescopes.
\end{proof}

\begin{lem}\label{lem:R+(la)}
  Let $\lambda=(N,N-2,\dots,N-2n+2)$. Then
\[
R^{(+1)}(\lambda) =\frac{|\lambda|}{N+1} \binom{N+1}{n}.
\]
\end{lem}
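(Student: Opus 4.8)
The plan is to combine Proposition~\ref{prop:R+1} with Lemma~\ref{lem:R(la)}: first write $R^{(+1)}(\lambda)=\sum_{x\in\BB(\lambda)}R(\lambda(x))$, then identify each shape $\lambda(x)$ explicitly, and finally add up the resulting $R$-values.

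First I would describe the border of the trapezoidal shape $\lambda=(N,N-2,\dots,N-2n+2)$. A direct check using the row lengths $\lambda_i=N-2i+2$ shows that, for each $i$ with $1\le i\le n-1$, the set $\BB(\lambda)$ contains exactly the last cell $(i,N-i+1)$ of row $i$ and its penultimate cell $(i,N-i)$, while the entire last row $\{(n,j):n\le j\le N-n+1\}$ lies in $\BB(\lambda)$; thus $|\BB(\lambda)|=N$. From the definition of $\lambda(x)$ one then computes: $\lambda((n,n))$ is the trapezoidal shape $(N-1,N-3,\dots,N-2n+3)$ with $n-1$ rows; $\lambda((i,N-i+1))$ for $i<n$ and $\lambda((n,j))$ for $n<j\le N-n+1$ all equal the trapezoidal shape $\tau:=(N-2,N-4,\dots,N-2n+2)$; and $\lambda((i,N-i))$ for $i<n$ is the ``once-notched trapezoid'' $\tau^{(i)}$, obtained from $\tau$ by decreasing its $i$th part by $1$ (which is again a strict partition). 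By Lemma~\ref{lem:R(la)}, $R(\lambda((n,n)))=\binom{N}{n-1}$ and $R(\tau)=\binom{N-1}{n-1}$, so Proposition~\ref{prop:R+1} gives
\[
R^{(+1)}(\lambda)=(N-n)\binom{N-1}{n-1}+\binom{N}{n-1}+\sum_{i=1}^{n-1}R\bigl(\tau^{(i)}\bigr),
\]
since the number of border cells of the first two types is $(n-1)+(N-2n+1)=N-n$.

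The remaining task, and the main point of the proof, is to evaluate $\sum_{i=1}^{n-1}R(\tau^{(i)})$; Lemma~\ref{lem:R(la)} does not apply to the non-trapezoidal shapes $\tau^{(i)}$, and this is where the real work lies. Since $\tau^{(i)}\subseteq\tau$ and a shifted subdiagram $\mu$ of $\tau$ is contained in $\tau^{(i)}$ unless its $i$th row is full, $R(\tau^{(i)})=R(\tau)-\#\{\mu\subseteq\tau:\mu_i=\tau_i\}$. Requiring $\mu_i=\tau_i$ decouples the rows above row $i$ from those below: the rows $i+1,\dots,n-1$ range over all shifted subdiagrams of the trapezoid $(\tau_{i+1},\dots,\tau_{n-1})$, which by Lemma~\ref{lem:R(la)} number $\binom{N-2i-1}{n-1-i}$, while the rows $1,\dots,i-1$ contribute a Catalan number $C_i$ (after the substitution $w_j=\mu_{i-j}-\tau_i-j$, for $0\le j\le i-1$, the count becomes the number of weakly increasing integer sequences $0=w_0\le w_1\le\dots\le w_{i-1}$ with $w_j\le j$). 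Hence $\sum_{i=1}^{n-1}R(\tau^{(i)})=(n-1)\binom{N-1}{n-1}-\sum_{i=1}^{n-1}C_i\binom{N-2i-1}{n-1-i}$, and the last sum is controlled by the classical Catalan convolution $\sum_{i\ge 0}C_i\binom{M-2i}{p-i}=\binom{M+1}{p}$ (provable e.g.\ by the first-return decomposition of lattice paths, or from the identity $(1+y)^M\,C\!\left(\tfrac{y}{(1+y)^2}\right)=(1+y)^{M+1}$ for the Catalan series $C$): taking $M=N-1$, $p=n-1$ and removing the $i=0$ term gives $\sum_{i\ge 1}C_i\binom{N-2i-1}{n-1-i}=\binom{N}{n-1}-\binom{N-1}{n-1}=\binom{N-1}{n-2}$.

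Collecting everything and simplifying with Pascal's rule (so that $\binom{N}{n-1}-\binom{N-1}{n-2}=\binom{N-1}{n-1}$), I would arrive at
\[
R^{(+1)}(\lambda)=(N-1)\binom{N-1}{n-1}+\binom{N-1}{n-1}=N\binom{N-1}{n-1},
\]
and since $|\lambda|=n(N-n+1)$ a direct computation shows $N\binom{N-1}{n-1}=\frac{|\lambda|}{N+1}\binom{N+1}{n}$, which is the claim. The hard part is exactly the appearance of the notched shapes $\tau^{(i)}$ and the isolation of their contribution via the Catalan convolution; an alternative to the decomposition above would be to rerun, for each $\tau^{(i)}$, the reflection/lattice-path argument used to prove Lemma~\ref{lem:R(la)}. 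Some minor bookkeeping is also needed in the degenerate cases (e.g.\ $N=2n-1$, where $\tau$ and $\tau^{(n-1)}$ have fewer positive parts, or very small $n$), but these do not affect the final formula.
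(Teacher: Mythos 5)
Your proof is correct and its skeleton is the same as the paper's: you apply Proposition~\ref{prop:R+1}, classify the border cells of the trapezoid into the same four types with the same images $\lambda(x)$, and count the subdiagrams of the notched shapes by exactly the paper's decomposition into a staircase part (Catalan number $C_i$) times a smaller trapezoid $\binom{N-1-2i}{n-1-i}$. The only real divergence is the last step: the paper recognizes $\sum_{i}C_i\binom{N-1-2i}{n-1-i}=\binom{N}{n-1}$ as a terminating ${}_4F_3$ and invokes Bailey's summation formula, whereas you evaluate it by the elementary Catalan generating-function identity $(1+y)^{M}C\bigl(y/(1+y)^2\bigr)=(1+y)^{M+1}$; the two are the same identity in different clothing, and your route avoids the hypergeometric machinery at no cost.
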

\begin{proof}
  Let $T_{a,b}$ denote the shifted Young diagram $(a,a-2,\dots,a-2b+2)$.
  By Proposition~\ref{prop:R+1},
  \begin{equation}
    \label{eq:4}
R^{(+1)}(T_{N,n}) = \sum_{x\in \BB(T_{N,n})} R(T_{N,n}(x)).
  \end{equation}

  Let $x\in\BB(T_{N,n})$. It is straightforward to check that
$R(T_{N,n}(x))$ is given as follows. 
\begin{itemize}
\item If $x=(n,n)$,
  \[
R(T_{N,n}(x)) = T_{N-1,n-1}.
  \]  
\item  If $x\ne (n,n)$ and $x$ is in the $n$th row,
  \[
R(T_{N,n}(x)) = T_{N-2,n-1}.
  \]  
\item If $x$ is the rightmost cell in the $i$th row ($1\le i\le n-1$),
  \[
R(T_{N,n}(x)) = T_{N-2,n-1}.
  \]  
\item If $x$ is the second rightmost cell in the $i$th row ($1\le i\le n-1$),
  \[
R(T_{N,n}(x)) =T_{N-2,n-1}\setminus\{(i,N-2i)\}.
  \]  
\end{itemize}
Therefore, by \eqref{eq:4},
\begin{align}
  \label{eq:3}
  R^{(+1)}(T_{N,n}(x)) = R(T_{N-1,n-1}) +
  ((N-2n+1)+(n-1))R(T_{N-2,n-1})& \\\notag
  +\sum_{i=1}^{n-1} R(T_{N-2,n-1}\setminus\{(i,N-2i)\})&.
\end{align}

Let $1\le i\le n-1$. Then 
\begin{equation}
  \label{eq:6}
  R(T_{N-2,n-1}\setminus\{(i,N-2i)\}) =
  R(T_{N-2,n-1}) - t(i),
\end{equation}
where $t(i)$ is the number of shifted Young diagrams $\mu\subseteq T_{N-2,n-1}$
containing the cell $(i,N-2i)$. Suppose $\mu$ is such a shifted Young diagram.
Then $\mu$ is determined by the two sub-diagrams $\alpha$ and $\beta$ of $\mu$,
where $\alpha$ is the set of cells of $\mu$ in columns $N-2,N-3,\dots,N-i$ and
$\beta$ is the set of cells of $\mu$ in rows $i+1,i+2,\dots,n-1$, see Figure~\ref{fig:decomp}.

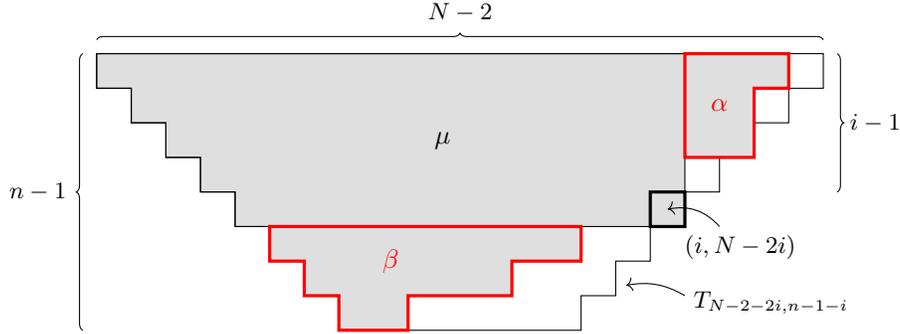
\begin{figure}[ht]
$$
\begin{tikzpicture}[scale=.46]
 \draw [snake=brace] (0, 8.4)--(21,8.4)
 				(-.4, 0)--(-.4, 8)
				(21.4, 8)--(21.4, 4);
\draw[fill=gray!25] (0,8)--(0,7)--(1,7)--(1,6)--(2,6)--(2,5)--(3,5)--(3,4)--(4,4)--(4,3)--(5,3)--(5,2)--(6,2)--(6,1)--(7,1)--(7,0)--(9,0)--(9,1)--(12,1)--(12,2)--(14,2)--(14,3)--(17,3)--(17,5)--(19,5)--(19,7)--(20,7)--(20,8)--(0,8)--cycle;
\draw (0,8)--(0,7)--(1,7)--(1,6)--(2,6)--(2,5)--(3,5)--(3,4)--(4,4)--(4,3)--(5,3)--(5,2)--(6,2)--(6,1)--(7,1)--(7,0)--(14,0)--(14,1)--(15,1)--(15,2)--(16,2)--(16,3)--(17,3)--(17,4)--(18,4)--(18,5)--(19,5)--(19,6)--(20,6)--(20,7)--(21,7)--(21,8)--(0,8)--cycle;
\draw[very thick, color=red] (5,3)--(5,2)--(6,2)--(6,1)--(7,1)--(7,0)--(9,0)--(9,1)--(12,1)--(12,2)--(14,2)--(14,3)--(5,3)--cycle;
\draw[very thick, color=red] (17,5)--(19,5)--(19,7)--(20,7)--(20,8)--(17,8)--(17,6)--cycle;
\draw[very thick] (16,3)--(17,3)--(17,4)--(16,4)--(16,3)--cycle;
\node[] at (8.5, 2) {\color{red} $\beta$};
\node[] at (18, 6.5) {\color{red} $\alpha$};
\node[] at (-1.7, 4) {\small $n-1$};
\node[] at (10.5, 9.2) {\small $N-2$};
\draw[->] (18, 3)to [bend right=30] (16.5, 3.5);
\node[] at (18.6, 2.4) {\small $(i, N-2i)$};
\draw[->] (17,1)to [bend right=20] (15.3, 1.3);
\node[] at (19.5,.8) {\small $T_{N-2-2i, n-1-i}$};
\node[] at (22.5, 6) {\small $i-1$};
\node[] at (10,5.5) {$\mu$};
\end{tikzpicture}
$$

  \caption{The diagram $\mu$ is determined by $\alpha$ and $\beta$.}
  \label{fig:decomp}
\end{figure}

Then one can
regard $\alpha$ as a Young diagram contained in the Young diagram
$(i-1,i-2,\dots,1)$ and $\beta$ as a shifted Young diagram contained in
$T_{N-2-2i,n-1-i}$. It is well known that the number of such $\alpha$ is given by
the Catalan number $\frac{1}{i+1}\binom{2i}{i}$. This argument shows that
\begin{equation}
  \label{eq:5}
t(i) = \frac{1}{i+1}\binom{2i}{i} R(T_{N-2-2i,n-1-i}).
\end{equation}
By \eqref{eq:3}, \eqref{eq:6}, \eqref{eq:5}, and Lemma~\ref{lem:R(la)}, we have
\begin{align*}
  R^{(+1)}(T_{N,n}(x)) &= R(T_{N-1,n-1}) + (N-1)R(T_{N-2,n-1}) 
                         -\sum_{i=1}^{n-1}\frac{1}{i+1}\binom{2i}{i} R(T_{N-2-2i,n-1-i}),\\
                       &= \binom{N}{n-1} + (N-1)\binom{N-1}{n-1}
                         -\sum_{i=1}^{n-1}\frac{1}{i+1}\binom{2i}{i}\binom{N-1-2i}{n-1-i} .
\end{align*}
Therefore the identity we need to show is
\begin{equation}
  \label{eq:8}
\frac{n(N-n+1)}{N+1} \binom{N+1}{n}  = \binom{N}{n-1} + N\binom{N-1}{n-1}
-\sum_{i=0}^{n-1}\frac{1}{i+1}\binom{2i}{i}\binom{N-1-2i}{n-1-i}
\end{equation}
which can be simplified to 
\begin{equation}\label{eq:9}
\frac{N}{N-n+1}=\sum_{i=0}^{n-1}\frac{\left( \frac{1}{2}, 1, n-N, 1-n\right)_i}{\left(1,  2, \frac{1-N}{2}, 1-\frac{N}{2}\right)_i}.
\end{equation}
Equation \eqref{eq:9} can be proved by applying the Bailey formula \cite[p. 512, (c)]{Bailey1929}
$$
{}_4 F_3\!\left[\begin{matrix} \frac{\mathsf{a}}{2}, \frac{\mathsf{a}+1}{2}, \mathsf{b}+\mathsf{n}, -\mathsf{n}\\
\frac{\mathsf{b}}{2}, \frac{\mathsf{b}+1}{2}, \mathsf{a}+1 \end{matrix};1\right]
=\frac{(\mathsf{b}-\mathsf{a})_\mathsf{n}}{(\mathsf{b})_\mathsf{n}}
$$
with $\mathsf{a}\mapsto 1$, $\mathsf{b}\mapsto 1-N$ and $\mathsf{n}\mapsto n-1$.
\end{proof}

Finally we can evaluate $\EE(X)$ for the lower interval of the shifted Young
poset below a trapezoidal shifted Young diagram.

\begin{thm}\label{thm:EX}
  Let $\lambda$ be a trapezoidal shifted Young diagram. Then
  the expectation $\EE(X)$ for the interval $[\emptyset,\lambda]$ is equal to
  \[
\EE(X)= \frac{|\lambda|}{\lambda_1+1}.
  \]
\end{thm}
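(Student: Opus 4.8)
The plan is to read off $\EE(X)$ directly from the two preceding lemmas. Recall that for the lower interval $[\emptyset,\lambda]$ we have, by definition,
\[
\EE(X)=\frac{R^{(+1)}(\lambda)}{R(\lambda)},
\]
where $R(\lambda)$ counts the shifted Young diagrams $\mu\subseteq\lambda$ and $R^{(+1)}(\lambda)=\sum_{\mu\subseteq\lambda}\ddeg(\mu)$. So the entire content of the theorem is the quotient of the two quantities already evaluated in Lemmas~\ref{lem:R(la)} and \ref{lem:R+(la)} for a trapezoidal shape.

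First I would write $\lambda=(N,N-2,\dots,N-2n+2)$ with $N=\lambda_1$, so that $\lambda$ is the shape denoted $T_{N,n}$ in the proof of Lemma~\ref{lem:R+(la)}. By Lemma~\ref{lem:R(la)} we have $R(\lambda)=\binom{N+1}{n}$, and by Lemma~\ref{lem:R+(la)} we have $R^{(+1)}(\lambda)=\frac{|\lambda|}{N+1}\binom{N+1}{n}$. Dividing, the binomial factors cancel and we obtain
\[
\EE(X)=\frac{R^{(+1)}(\lambda)}{R(\lambda)}
=\frac{\frac{|\lambda|}{N+1}\binom{N+1}{n}}{\binom{N+1}{n}}
=\frac{|\lambda|}{N+1}=\frac{|\lambda|}{\lambda_1+1},
\]
which is the claimed formula. (One may also record $|\lambda|=nm+n^2+n$ when $\lambda=(m+2n,m+2n-2,\dots,m+2)$, matching the normalization used in Section~\ref{sec:enum-stand-barely}, but this is not needed for the statement.)

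There is essentially no obstacle at this stage: all the real work was done in establishing Lemmas~\ref{lem:R(la)} and \ref{lem:R+(la)}, in particular the Catalan-number decomposition of $\mu$ into the pair $(\alpha,\beta)$ and the hypergeometric evaluation via the Bailey formula that yields the clean product $\frac{|\lambda|}{N+1}\binom{N+1}{n}$ for $R^{(+1)}(\lambda)$. Finally I would remark that, combined with Theorem~\ref{thm:g+1 trapezoidal}, which gives $\EE(Y)=\frac{|\lambda|}{\lambda_1+1}$ for the same interval, this proves $\EE(X)=\EE(Y)$ and hence establishes the CDE property asserted in Theorem~\ref{thm:trapezoidal}, completing its proof.
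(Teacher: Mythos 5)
Your proof is correct and coincides with the paper's own argument: both simply write $\lambda=(N,N-2,\dots,N-2n+2)$ and divide the formula of Lemma~\ref{lem:R+(la)} by that of Lemma~\ref{lem:R(la)}. Nothing further is needed.
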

\begin{proof}
  Since $\lambda$ is trapezoidal, we can write $\lambda=(N,N-2,\dots,N-2n+2)$ for
  some nonnegative integers $N,n$. Then we have
  \[
    \EE(X) = \frac{R^{(+1)}(\lambda)}{R(\lambda)}
    = \frac{|\lambda|}{N+1}= \frac{|\lambda|}{\lambda_1+1},
  \]
   by Lemmas~\ref{lem:R(la)} and
  \ref{lem:R+(la)}.
\end{proof}

Theorem~\ref{thm:trapezoidal} in the introduction then follows from
Theorems~\ref{thm:g+1 trapezoidal} and \ref{thm:EX}.

\begin{remark}\label{rem:antichain}
  We note that Theorem~\ref{thm:EX} can also be proved by combining known
  results in the literature as follows. This proof is due to Sam Hopkins
  (personal communication).

  Suppose $L$ is a distributive lattice. Then $L$ can be written as the poset
  $J(P)$ of order ideals of some poset $P$, see \cite[3.4.1~Theorem]{EC1}. One
  can show that the down-degree expectation $\EE_L(X)$ for $L$ with respect to
  the uniform distribution is the average size of an antichain in $P$, see
  \cite[Section~3.1]{Hopkins_2020}.

  Now let $L$ be the lower interval $[\emptyset,\lambda]$ for a trapezoidal
  shifted Young diagram $\lambda$. Then $L=J(P_\lambda)$, where $P_\lambda$
  denotes the poset whose elements are the cells in $\lambda$ and two elements
  $x,y\in P_\lambda$ satisfy $x< y$ if $x$ is weakly to the southeast of $y$ in
  $\lambda$. Then by the fact in the above paragraph, $\EE_L(X)$ is the average
  size of an antichain in $P_\lambda$. Stembridge
  \cite[Corollary~2.4]{Stembridge_1986} showed that if $\lambda$ is the
  trapezoidal shape $(m+n-1,m+n-3,\dots,m+n-(2m-1))$ and $\mu$ is the rectangle
  $(n^m)$, then, for each $k$, the number of antichains of size $k$ is equal to
  $\binom{m}{k}\binom{n}{k}$ for both $P_\lambda$ and $P_\mu$. In particular,
  the average size of an antichain in $P_\lambda$ is equal to that in $P_\mu$.
  This shows that $\EE_L(X)=\EE_{L'}(X)$, where $L'=J(P_\mu)$. Then $L'$ is
  isomorphic to the lower interval $[\emptyset,\mu]$ for $\mu=(n^m)$. Therefore,
  by Theorem~\ref{thm:CHHM},
\[
\EE_L(X)=\EE_{L'}(X)=\frac{mn}{m+n}=\frac{|\lambda|}{\lambda_1+1},
\]
which is Theorem~\ref{thm:EX}.
\end{remark}

%----------------------------------------------------------------------------------------------------------
\appendix
\section{An $\ma;q$-analogue of the expectation $\EE(X)$}
\label{sec:an-ma-q}
%----------------------------------------------------------------------------------------------------------

Our proof of Theorem~\ref{sec:proof} makes use of $q$-integrals and in
our applications we either utilize identities for basic hypergeometric series
(see in particular the proof of Theorem~\ref{thm:g+1 trapezoidal}) or use
summations which have $q$-analogues (see e.g. the proofs of
Theorems~\ref{thm:conj24} and \ref{thm:thm4.2}). It is thus natural to ask
whether $q$-analogues of the results proved in this paper exist.

Reiner, Tenner, and Yong \cite[Proposition~1.5]{RTY2018} considered the
$q$-analog $R(\lambda,q)=\sum_{\mu\subseteq\la}q^{|\mu|}$ of the number
$R(\lambda)$ of Young diagrams contained in $\lambda$ and found a recurrence
satisfied by $R(\la,q)$. Hopkins \cite[Section~3.3]{Hopkins_2020} considered
certain $q$-analogues of down-degree generating functions for $P$-partitions,
which contain the lower interval $[\emptyset,\lambda]$ as a special case.

Various possible $q$-analogues of $\EE(X)$ or $\EE(Y)$ are feasible. A good
$q$-analogue may also be accompanied by nice results, such as product formulas
extending those for ordinary enumeration.

It seems rather difficult to find a $q$-analogue of the down-degree expectation
(with respect to the maximal chain cardinality distribution) $\EE(Y)$ that would
allow closed form results in cases of interest. We propose an $\ma;q$-analogue,
where $\ma$ is an extra free parameter, of the simpler expectation (with respect
to the uniform distribution) $\EE(X)$ which conjecturally can be expressed as a
product for a suitably restricted class of lower intervals of Young's lattice.

We work with so-called $\ma;q$-weights.
These are important special cases of the elliptic weights that were originally
introduced by the second author in \cite{Schlosser2007} in a closely related
context and made further appearance in a series of papers devoted to
elliptic combinatorics (see \cite{BCK,SchlosserYoo2017b,SchlosserYoo2018}
and the references therein).
In some instances the enumeration with respect to elliptic weights does not
yield closed formulas, but the specialization to $\ma;q$-weights does. 
See in particular, \cite{SchlosserYoo2017c}, which shows how
the $\ma;q$-enumeration of rook configurations can be used to
obtain (or recover) summations for basic hypergeometric series.
A similar feature seems to apply here as well.

Let $\ma$ and $q$ be indeterminates.
For a non-negative integer $n$, define the $\ma;q$-weight by
\begin{subequations}
\begin{equation}\label{aqw}
W_{\ma;q}(n)=\frac{1-\ma q^{1+2n}}{1-\ma q}q^{-n},
\end{equation}
and the $\ma;q$-number by
\begin{equation}\label{aqn}
[n]_{\ma;q}:=\frac{(1-q^n)(1-\ma q^n)}{(1-q)(1-\ma q)}q^{1-n}.
\end{equation}
\end{subequations}
Clearly, $W_{\ma;q}(0)=1$.
It is easy to see that the sum of the $\ma;q$-weights telescope
to the $\ma;q$-numbers:
\begin{equation*}
\sum_{k=0}^{n-1}W_{\ma;q}(k)=[n]_{\ma;q}.
\end{equation*}
For $\ma\to\infty$, the $\ma;q$-weight in \eqref{aqw}
and the $\ma;q$-number in \eqref{aqn} 
reduce to the $q$-weight $q^n$ and to the standard $q$-number $[n]_q$,
respectively. (Further, for $\ma\to -1$ the $\ma;q$-number in \eqref{aqn}
reduces to the ``quantum number" $(q^n-q^{-n})/(q-q^{-1})$,
which is a $q$-analogue of $n$ that satisfies the symmetry
$q\leftrightarrow q^{-1}$.)

We now define an $\ma;q$-analogue of the down-degree expectation $\EE(X)$.
Recall that for each partition $x$, the number of cells of $x$ is denoted by
$|x|$.

\begin{defn}
  Let $\la$ be a fixed Young diagram. Write $w=\lambda_1$ and
  $\ell=\ell(\lambda)$ for the width and length of $\la$, respectively, and let
  $d=\gcd(w,\ell)$. For $x,y\in [\emptyset,\lambda]$ write $y\lessdot x$ if $y$
  is covered by $x$ in this lower interval. If $y$ is obtained from $x$ by
  deleting a cell in row $s$, define the weight $\wt(x,y)$ by
$$\wt(x,y) = W_{\ma q^{\frac{w+\ell}d};q}\bigg(\frac{w(s-1)+\ell(|x|-1)}d\bigg).$$
Define $\aqddeg(x)$ to be the sum of the weights $\wt(x,y)$ for all
$y\in[\emptyset,\lambda]$ satisfying $y\lessdot x$. Further, define the
$\ma;q$-weight generating function of $[\emptyset,\lambda]$ by
$$
R(\la|\,\ma;q)=\sum_{x\in [\emptyset,\lambda]} W_{\ma q^{\frac{w\ell}d};q}\bigg(\frac{\ell|x|}d\bigg).
$$
Now define $\EE_{\ma;q}(X)$ by
\begin{equation}\label{aqexp}
\EE_{\ma;q}(X)=
\frac{\sum_{x\in [\emptyset,\lambda]} \aqddeg(x)}
{R(\la|\,\ma;q)}.
\end{equation}
\end{defn}

Notice that for $\ma\to\infty$, the $\ma;q$-weight generating function
$R(\la|\,\ma;q)$ reduces to $R\big(\la,q^{\frac{\ell}d}\big)$, not to $R(\la,q)$.
With the above definitions the following conjecture is easy to state and takes
a symmetric form.
\begin{conj}\label{conjaq}
If $\la$ is a balanced partition (of any slope), then the product formula
\begin{equation}\label{aqexpf}
\EE_{\ma;q}(X)=\frac{\big[\frac {w\ell}d\big]_{\ma q^{\frac{w+\ell}d};q}}
{\big[\frac {w+\ell}d\big]_{\ma q^{\frac{w\ell}d};q}}
\end{equation}
holds, where $w=\lambda_1$, $\ell=\ell(\lambda)$, and $d=\gcd(w,\ell)$.
\end{conj}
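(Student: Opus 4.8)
The plan is to mirror, at the level of $\ma;q$-weights, the proof of Theorem~\ref{thm:EX}, replacing each combinatorial count by its $\ma;q$-weighted refinement and each hypergeometric summation by an elliptic (or $\ma;q$-) summation. The starting point would be an $\ma;q$-analogue of Proposition~\ref{prop:R+1}: writing $\BB(\lambda)$ for the border of $\lambda$ and $\lambda(x)$ for the contracted diagram as in Section~\ref{sec:sum-down-degrees} (with the evident non-shifted variant), one expects an identity of the form
\[
\sum_{x\in[\emptyset,\lambda]}\aqddeg(x)=\sum_{x\in\BB(\lambda)} c_x\, R(\lambda(x)\,|\,\ma q^{e_x};q),
\]
for explicit scalars $c_x$ and exponents $e_x$ dictated by the affine data $\tfrac1d(w(s-1)+\ell(|x|-1))$ appearing in the definition of $\wt$. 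The mechanism producing this identity is exactly the telescoping $\sum_{k=0}^{n-1}W_{\ma;q}(k)=[n]_{\ma;q}$: summing $\wt(x,y)$ over the choices of ``where the deleted cell is reinserted'' collapses a consecutive range of $W_{\ma;q}$-weights to a single $\ma;q$-number, just as the sum $q^{\nu_{i+1}+1}+\dots+q^{\nu_i}$ collapses to a single term in the proof of Proposition~\ref{prop:SSYT_int2}. Establishing this weighted form of Proposition~\ref{prop:R+1}, with the parameter shifts $e_x$ correctly identified, is the first milestone.

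Next I would reduce to the rectangular case $\lambda=(a^b)$. Two routes seem available. The first is to upgrade Stembridge's equinumerosity of antichains (Remark~\ref{rem:antichain}) to an $\ma;q$-weighted equality: since $\wt(x,y)$ and the generating-function weight depend on $x$ only through $|x|$ and the row $s$ of the deleted cell, one would need the transfer map between $P_\lambda$ for a balanced $\lambda$ and $P_{(a^b)}$ to preserve these two statistics simultaneously, which is a demanding requirement and the likely sticking point of this route. The second route uses the block decomposition of balanced shapes — writing a slope-$b/a$ balanced $\lambda$ in terms of a composition $(a_1,\dots,a_\ell)$ as in the proof of Theorem~\ref{thm:thm4.2} — and inducts on $\ell$, with the weighted $\lambda\mapsto\lambda(x)$ recursion supplying the inductive step; here the obstacle is that $\lambda(x)$ generally leaves the class of balanced shapes, so one must either enlarge the class to one closed under the contraction or prove a more general product formula by hand.

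Finally, for the rectangle $(a^b)$ the subdiagrams are partitions in an $a\times b$ box, so $R((a^b)\,|\,\ma;q)$ should be an $\ma;q$-Gaussian binomial coefficient (an elliptic binomial in the sense of Schlosser and collaborators), while $\sum_x\aqddeg(x)$, after applying the weighted Proposition~\ref{prop:R+1}, should reduce to a terminating very-well-poised elliptic series. Specializing a Frenkel--Turaev-type summation — the elliptic lift of Dougall's ${}_5F_4$ used in the proof of Theorem~\ref{thm:conj24}, or of the Bailey ${}_4F_3$ used in Lemma~\ref{lem:R+(la)} — should then collapse it to the ratio $\big[\tfrac{w\ell}d\big]_{\ma q^{(w+\ell)/d};q}\big/\big[\tfrac{w+\ell}d\big]_{\ma q^{w\ell/d};q}$; the limits $\ma\to\infty$ and then $q\to1$ would recover Theorem~\ref{thm:CHHM} as a consistency check, and the alternative elliptic-rook formulation alluded to in the appendix may offer a second derivation of the same summation.

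The main obstacle throughout is the bookkeeping of the two distinct base points $\ma q^{(w+\ell)/d}$ (appearing in $\aqddeg$) and $\ma q^{w\ell/d}$ (appearing in $R(\lambda\,|\,\ma;q)$): the contraction $\lambda\mapsto\lambda(x)$ changes $w$, $\ell$, and $d$ simultaneously, so one must verify that the ``elliptic curve'' underlying the $\ma;q$-weights is respected under all the reductions above. This compatibility is precisely what makes the symmetric product form \eqref{aqexpf} plausible, and pinning it down — both in the weighted analogue of Proposition~\ref{prop:R+1} and in the passage to rectangles — is the crux of the argument.
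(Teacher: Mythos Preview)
The statement you are attempting to prove is labeled a \emph{conjecture} in the paper, and the paper does not prove it: the appendix only verifies the one-row and one-column cases, evaluates the denominator $R((w^\ell)\,|\,\ma;q)$ for rectangles, and checks three small examples by hand. So there is no ``paper's own proof'' to compare against, and your submission must stand on its own as a proof. It does not: what you have written is an outline of a strategy in which every key step is left open (``one expects an identity of the form\dots'', ``should be an $\ma;q$-Gaussian binomial'', ``should then collapse''), and you explicitly flag the obstacles yourself (the transfer map preserving two statistics simultaneously; the contraction $\lambda\mapsto\lambda(x)$ leaving the class of balanced shapes; the bookkeeping of the two shifted base points). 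A plan with acknowledged gaps is not a proof.

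Beyond that, two of your proposed routes have concrete problems. First, the Stembridge equinumerosity in Remark~\ref{rem:antichain} relates antichains in \emph{trapezoidal shifted} posets to those in \emph{rectangular} posets; it says nothing about general balanced ordinary shapes, which is the setting of Conjecture~\ref{conjaq}. So the ``upgrade Stembridge to an $\ma;q$-weighted equality'' route is not applicable here, even before worrying about whether the map preserves $(|x|,s)$. Second, Proposition~\ref{prop:R+1} and the contraction $\lambda\mapsto\lambda(x)$ are formulated for shifted diagrams; you would first have to write down and prove the ordinary-shape analogue, and then the weighted refinement --- neither of which you do. Until at least the weighted analogue of Proposition~\ref{prop:R+1} is actually stated and proved with the exact scalars $c_x$ and shifts $e_x$, and one of the two reduction routes is carried through, the conjecture remains open.
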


It is not difficult to verify directly, by using the definition for the $\ma; q$-numbers,
that \eqref{aqexpf} can be alternatively written as
\begin{equation}\label{aqexpf2}
\EE_{\ma;q}(X)=
\frac{\big[\frac {w\ell}d\big]_{\ma q;q^{\frac{w+\ell}d}}}
{\big[\frac {w+\ell}d\big]_{\ma q; q^{\frac{w\ell}d}}}.
\end{equation}

It is interesting to see from \eqref{aqexpf} (or from \eqref{aqexpf2})
that for balanced partitions $\la$, the down-degree expectation $\EE_{\ma;q}(X)$
on $[\emptyset,\lambda]$ is the same as on the poset $[\emptyset,\lambda']$,
the lower interval poset related to the conjugate $\la'$ of $\la$.
This symmetry is not obvious from \eqref{aqexp} and an explanation of
this fact itself would be desirable.

Conjecture~\ref{conjaq} is easy to confirm in the cases of $\la$ consisting
of one row or of one column, by induction. In the case of $\la$ consisting
of one row (resp.\ one column), the numerator in \eqref{aqexp} simplifies
to the numerator in \eqref{aqexpf} (resp.\ \eqref{aqexpf2}), while the
denominator in \eqref{aqexp} then simplifies to the
denominator in \eqref{aqexpf} (resp.\ \eqref{aqexpf2}).

It is also easy to show that if $\la=(w^\ell)$ is a rectangular shape, the
denominator $R(\la|\,\ma;q)$ simplifies into a product as follows.
First of all, it is well-known (and corresponds to a classical result by MacMahon)
that
$$
\sum_{x\in [\emptyset,(w^\ell)]}q^{|x|}=\begin{bmatrix}w+\ell\\\ell\end{bmatrix}_q,
$$
where
$$
\begin{bmatrix}w+\ell\\\ell\end{bmatrix}_q=\frac{(q;q)_{w+\ell}}{(q;q)_w(q;q)_\ell}
$$
is the $q$-binomial coefficient. By definition one can easily see that
$$
\begin{bmatrix}w+\ell\\\ell\end{bmatrix}_{q^{-1}}=
\begin{bmatrix}w+\ell\\\ell\end{bmatrix}_q\,q^{-w\ell}.
$$
Now
\begin{align*}
\sum_{x\in [\emptyset,(w^\ell)]}W_{\ma;q}(|x|)&=
\sum_{x\in [\emptyset,(w^\ell)]}\frac{1-\ma q^{1+2|x|}}{1-\ma q}q^{-|x|}
=(1-\ma q)^{-1}\left(\sum_{x\in [\emptyset,(w^\ell)]}q^{-|x|}-
\ma q\sum_{x\in [\emptyset,(w^\ell)]}q^{|x|}\right)\\
&=(1-\ma q)^{-1}\left(\begin{bmatrix}w+\ell\\\ell\end{bmatrix}_{q^{-1}}-
\ma q\begin{bmatrix}w+\ell\\\ell\end{bmatrix}_{q}\right)
=\begin{bmatrix}w+\ell\\\ell\end{bmatrix}_q
\frac{1-\ma q^{1+w\ell}}{1-\ma q}q^{-w\ell}.
\end{align*}
This immediately implies
\begin{equation}
R\big((w^\ell)|\,\ma;q\big)=
\begin{bmatrix}w+\ell\\\ell\end{bmatrix}_{q^{\frac{\ell}d}}
\frac{1-\ma q^{1+\frac{w\ell(\ell+1)}d}}
{1-\ma q^{1+\frac{w\ell}d}}q^{-\frac{w\ell^2}d}.
\end{equation}

We finally give three concrete examples which illustrate cancellation of
\textit{non-linear} factors in the computation of $\EE_{\ma;q}(X)$. (At the same
time, they explain why the parameter $\ma$ in the numerator and denominator is
shifted differently. Without these shifts, the $\ma$-dependent factors would not
cancel each other.)

\begin{exam}
Let $\la=(4,2)$.
Then $(w,\ell,d)=(4,2,2)$ and
\begin{align*}
\sum_{x\in [\emptyset,\lambda]} \aqddeg(x)&=0+1+W_{\ma q^3;q}(1)
+2W_{\ma q^3;q}(2)+3W_{\ma q^3;q}(3)\\[-.7em]
&\quad\;+3W_{\ma q^3;q}(4)+3W_{\ma q^3;q}(5)
+2W_{\ma q^3;q}(6)+W_{\ma q^3;q}(7)\\
&=\frac{(1-q^4)(1+q+q^2+q^4-\ma q^{11}-\ma q^{13}-\ma q^{14}-\ma q^{15})}
{(1-q)(1-\ma q^4)}q^{-7},
\end{align*}
and
\begin{align*}
R(\la|\,\ma;q)&=
1+W_{\ma q^4;q}(1)+2W_{\ma q^4;q}(2)+2W_{\ma q^4;q}(3)
+3W_{\ma q^4;q}(4)%\\&\quad\;
+2W_{\ma q^4;q}(5)+W_{\ma q^4;q}(6)\\
&=\frac{(1-q^3)(1+q+q^2+q^4-\ma q^{11}-\ma q^{13}-\ma q^{14}-\ma q^{15})}
{(1-q)(1-\ma q^5)}q^{-6},
\end{align*}
so according to \eqref{aqexp},
\begin{equation*}
\EE_{\ma;q}(X)=
\frac{\sum_{x\in [\emptyset,\lambda]} \aqddeg(x)}
{R(\la|\,\ma;q)}=
\frac{(1-q^4)(1-\ma q^5)}{(1-q^3)(1-\ma q^4)}q^{-1}
=\frac{[4]_{\ma q^3;q}}
{[3]_{\ma q^4;q}},
\end{equation*}
which agrees with the $(w,\ell,d)=(4,2,2)$ case of \eqref{aqexpf}.
\end{exam}

\begin{exam}
Let $\la=(2,2,1,1)$.
Then $(w,\ell,d)=(2,4,2)$ and
\begin{align*}
\sum_{x\in [\emptyset,\lambda]} \aqddeg(x)&=0+1+W_{\ma q^3;q}(2)
+W_{\ma q^3;q}(3)+W_{\ma q^3;q}(4)
+W_{\ma q^3;q}(5)+2W_{\ma q^3;q}(6)
+W_{\ma q^3;q}(7)\\[-.7em]
&\quad\;
+2W_{\ma q^3;q}(8)+2W_{\ma q^3;q}(9)%\\&\quad\;
+W_{\ma q^3;q}(10)+2W_{\ma q^3;q}(11)+W_{\ma q^3;q}(13)
\\
&=\frac{(1-q^4)(1-q^6)
(1+q^2+q^4+q^8-\ma q^{17}-\ma q^{21}-\ma q^{23}-\ma q^{25})}
{(1-q^2)(1-q^3)(1-\ma q^4)}q^{-13},
\end{align*}
and
\begin{align*}
R(\la|\,\ma;q)&=
1+W_{\ma q^4;q}(2)+2W_{\ma q^4;q}(4)+2W_{\ma q^4;q}(6)
+3W_{\ma q^4;q}(8)%\\&\quad\;
+2W_{\ma q^4;q}(10)+W_{\ma q^4;q}(12)\\
&=\frac{(1-q^6)
(1+q^2+q^4+q^8-\ma q^{17}-\ma q^{21}-\ma q^{21}-\ma q^{25})}
{(1-q^2)(1-\ma q^5)}q^{-12},
\end{align*}
so according to \eqref{aqexp},
\begin{equation*}
\EE_{\ma;q}(X)=
\frac{\sum_{x\in [\emptyset,\lambda]} \aqddeg(x)}
{R(\la|\,\ma;q)}=
\frac{(1-q^4)(1-\ma q^5)}{(1-q^3)(1-\ma q^4)}q^{-1}
=\frac{[4]_{\ma q^3;q}}
{[3]_{\ma q^4;q}},
\end{equation*}
which agrees with the $(w,\ell,d)=(2,4,2)$ case of \eqref{aqexpf}.
\end{exam}

\begin{exam} 
Let $\la=(3,2,1)$.
Then $(w,\ell,d)=(3,3,3)$ and
\begin{align*}
\sum_{x\in [\emptyset,\lambda]} \aqddeg(x)&=0+1+W_{\ma q^2;q}(1)+3W_{\ma q^2;q}(2)
+3W_{\ma q^2;q}(3)\\[-.7em]
&\quad\;+5W_{\ma q^2;q}(4)+4W_{\ma q^2;q}(5)+3W_{\ma q^2;q}(6)
+W_{\ma q^2;q}(7)\\
&=\frac{(1-q^3)(1+2q+q^2+2q^3+q^5-\ma q^{10}-2\ma q^{12}-\ma q^{13}
-2\ma q^{14}-\ma q^{15})}{(1-q)(1-\ma q^3)}q^{-7},
\end{align*}
and
\begin{align*}
R(\la|\,\ma;q)&=
1+W_{\ma q^3;q}(1)+2W_{\ma q^3;q}(2)+3W_{\ma q^3;q}(3)
+3W_{\ma q^3;q}(4)+3W_{\ma q^3;q}(5)+W_{\ma q^3;q}(6)\\
&=\frac{(1-q^2)(1+2q+q^2+2q^3+q^5-\ma q^{10}-2\ma q^{12}
-\ma q^{13}-2\ma q^{14}-\ma q^{15})}{(1-q)(1-\ma q^4)}q^{-6}.
\end{align*}
Then according to \eqref{aqexp},
\begin{equation*}
\EE_{\ma;q}(X)=
\frac{\sum_{x\in [\emptyset,\lambda]} \aqddeg(x)}
{R(\la|\,\ma;q)}=
\frac{(1-q^3)(1-\ma q^4)}{(1-q^2)(1-\ma q^3)}q^{-1}
=\frac{[3]_{\ma q^2;q}}
{[2]_{\ma q^3;q}},
\end{equation*}
which agrees with the $(w,\ell,d)=(3,3,3)$ case of \eqref{aqexpf}.
\end{exam}

\section*{Acknowledgments}

The authors would like to thank Sam Hopkins for the alternative proof of
Theorem~\ref{thm:EX} in Remark~\ref{rem:antichain}.

Part of this article was written while the first author was participating in
the 2020 program in
Algebraic and Enumerative Combinatorics at Institut Mittag-Leffler. He
would like to thank the institute for the hospitality and Sara Billey, Petter
Br\"and\'en, Sylvie Corteel, and Svante Linusson for organizing the program.

This material is based upon work supported by the Swedish Research
Council under grant no. 2016-06596 while the first author was in residence
at Institut Mittag-Leffler in Djursholm, Sweden during the winter of 2020.

%----------------------------------------------------------------------------------------------------------

\end{document}